\numberwithin{figure}{section}
\numberwithin{table}{section}
\def\E{\mathbb{E}}
\newcommand{\bequ}{\begin{equation}}     \newcommand{\eequ}{\end{equation}}
\newcommand{\benn}{\begin{equation*}}    \newcommand{\eenn}{\end{equation*}}
\newcommand{\bbma}{\begin{bmatrix}}      \newcommand{\ebma}{\end{bmatrix}}
\newcommand{\R}{\mathbb{R}}
\newcommand{\jn}[1]{{\color{blue}#1}}
\newcommand{\rb}[1]{{\color{purple}#1}}
\newcommand{\vb}[1]{{\color{cyan}#1}}
\newtheorem{thm}{Theorem}[section]
\newtheorem{lem}[thm]{Lemma}
\newtheorem{cor}[thm]{Corollary}
\newtheorem{assum}[thm]{Assumptions}
\numberwithin{equation}{section}
\newcommand{\beq}{\begin{equation}}
\newcommand{\eeq}{\end{equation}}
\newcommand{\bproof}{\begin{description} \item[{\it Proof}.] ~ }
	\newcommand{\eproof}{\hspace*{\fill}$\Box$\medskip \end{description}}
\newcounter{algo}[section]
		\newcounter{prog}[section]
\newcommand{\cmnt}[1]{\quad \text{(#1)}}
\title{ Constrained and Composite Optimization via Adaptive Sampling Methods}
\author{       
        Yuchen Xie\thanks{Department of Industrial Engineering and Management Sciences, Northwestern University, 
       Evanston, IL, USA.  This author was supported by the Office of Naval Research grant N00014-14-1-0313 P00003, and by National Science Foundation grant DMS-1620022.}
       \and  
       Raghu Bollapragada\thanks{Department of Mechanical Engineering, University of Texas, Austin, USA.}
       \and   
       Richard Byrd \thanks{Department of Computer Science, University of Colorado,
        Boulder, CO, USA.  This author was supported by National Science Foundation grant DMS-1620070.}
        \and
       Jorge Nocedal \thanks{Department of Industrial Engineering and Management Sciences, Northwestern University, 
       Evanston, IL, USA.  This author was supported by the Office of Naval Research grant N00014-14-1-0313 P00003, and National Science Foundation grant DMS-1620022.} 
      }
\date{\today}
\begin{document}

 \maketitle

 \begin{abstract}
 The motivation for this paper stems from the desire to develop an adaptive sampling method for solving constrained optimization problems in which the objective function is stochastic and the constraints are deterministic. The method proposed in this paper 
 is a proximal gradient method that can also be applied to the composite optimization problem  min $f(x) + h(x)$, where $f$ is stochastic and $h$ is convex (but not necessarily differentiable). Adaptive sampling methods employ a  mechanism for gradually improving the quality of the gradient approximation so as to keep computational cost to a minimum. The mechanism commonly  employed in unconstrained optimization is no longer reliable in the constrained or composite optimization settings because it is based on pointwise decisions that cannot correctly predict the quality of the proximal gradient step. The method proposed in this paper measures the result of a complete step to determine if the gradient approximation is accurate enough; otherwise a more accurate gradient is generated and a new step is computed. Convergence results are established both for strongly convex and general convex $f$. Numerical experiments are presented to illustrate the practical behavior of the method.

 \end{abstract}

\section{Introduction}
In this paper, we study the solution of constrained and composite optimization problems in which the objective function is stochastic and the constraints or regularizers are deterministic. We propose  methods that automatically adjust the quality of the gradient estimate so as to keep computational cost at a minimum while ensuring a fast rate of convergence. Methods of this kind have been studied in the context of unconstrained optimization but their extension to the constrained and composite optimization settings  is not simple because the projections or proximal operators used in the methods introduce discontinuities. This renders existing rules for the control of the gradient unreliable. Whereas in the unconstrained setting pointwise decisions suffice to estimate the quality of a gradient approximation, in the presence of constraints or nonsmooth regularizers one must analyze the result of a complete step. 

Let us begin by considering the  optimization problem
\begin{equation}    \label{conprob}
	\min_{x \in \mathbb{R}^n}  f(x) \quad\mbox{s.t.} \ \ x \in \Omega,
\end{equation}
where $f:\mathbb{R}^n \rightarrow \mathbb{R}$ is a stochastic objective function and $\Omega$ is a deterministic convex set. Automatic rules for controlling the quality of the gradient  when $\Omega= \mathbb{R}^n$ have been studied from a theoretical perspective and  have been successfully applied  to  expected risk minimization problems arising in machine learning. Since in that context the  gradient approximation is controlled by the sample size, these methods have been called ``adaptive sampling'' methods. 
A fundamental mechanism  for controlling the quality of the gradient in the unconstrained setting  is the \textit{norm test} \cite{byrd2012sample}, which lies behind most algorithms and theory of adaptive sampling methods. 

To describe this test, let $\Omega = \mathbb{R}^n$, and consider the iteration 
 \begin{equation}  \label{iteration}
 x_{k+1}= x_k -\alpha_k g_k,
 \end{equation}
where $\alpha_k >0$ is a steplength and $g_k$ is an approximation to the  gradient $\nabla f(x_k)$. To determine if $g_k$ is sufficiently accurate to ensure that iteration \eqref{iteration} is convergent, one can test the inequality  \cite{byrd2012sample}: 
\begin{equation}\label{normt}
    \mathbb{E}[  \| g_k - \nabla f(x_k)\|_2^2] \leq  \xi \| \nabla f(x_k)\|_2^2,  \quad 
    \xi >0,
\end{equation}
where the expectation is taken with respect to the choice of $g_k$ at iteration $k$.  If \eqref{normt} is  satisfied, $g_k$ is deemed accurate enough; otherwise a new and more accurate gradient approximation is computed.  We refer to this procedure as the norm test to distinguish it from tests based on angles \cite{bollapragada2018adaptive}.

The norm test is, however, not adequate in the constrained setting. To see this, suppose that we apply the gradient projection method, $ x_{k+1}= P_\Omega[x_k -\alpha_k g_k] $,  to solve problem \eqref{conprob} when $\Omega \neq \mathbb{R}^n$.   A  condition such as \eqref{normt} on the quality of the gradient approximation at one point cannot always predict the quality of the full step because the latter is based on a projection of the gradient, which may be much smaller.
This is illustrated in Figure \ref{fig:failure norm test}, where we consider the minimization of a strongly convex quadratic function subject to a linear constraint:
\begin{equation*}
  \begin{aligned}
    \min_{x \in \mathbb{R}^n} & ~\tfrac{1}{2} x^T Q x + b^T x + c \qquad
    \text{s.t.}  ~ a^T x \leq 0.
  \end{aligned}
\end{equation*}
In Figure \ref{fig:failure norm test}, $\widehat{x}^*$ denotes the unconstrained minimizer and $x^{ *}$  the solution of the constrained problem.  We let the iterate $x_k$ lie on the boundary of the constraint,  very close to the solution $x^*$, and  observe that $\|\nabla f(x_k)\|$ is large, and stays large as $x_k$ approaches $x^*$.
Thus,  \eqref{normt} does not force the error in $g_k$ to zero
as $x_k \rightarrow x^*.$ 

The instance of $g_k$ shown in Figure \ref{fig:failure norm test}  satisfies $\| g_k - \nabla f(x_k)\| < \| \nabla f(x_k) \|$, but results in a poor step. Clearly satisfaction of \eqref{normt} allows for many such steps. Note, however, that $\| g_k - \nabla f(x_k)\| $ is greater than the norm of the projected gradient $P_\Omega[g_k]$, which is a more appropriate measure.   
\begin{figure}[htp]
  \centering
  \includegraphics[width=0.5\linewidth]{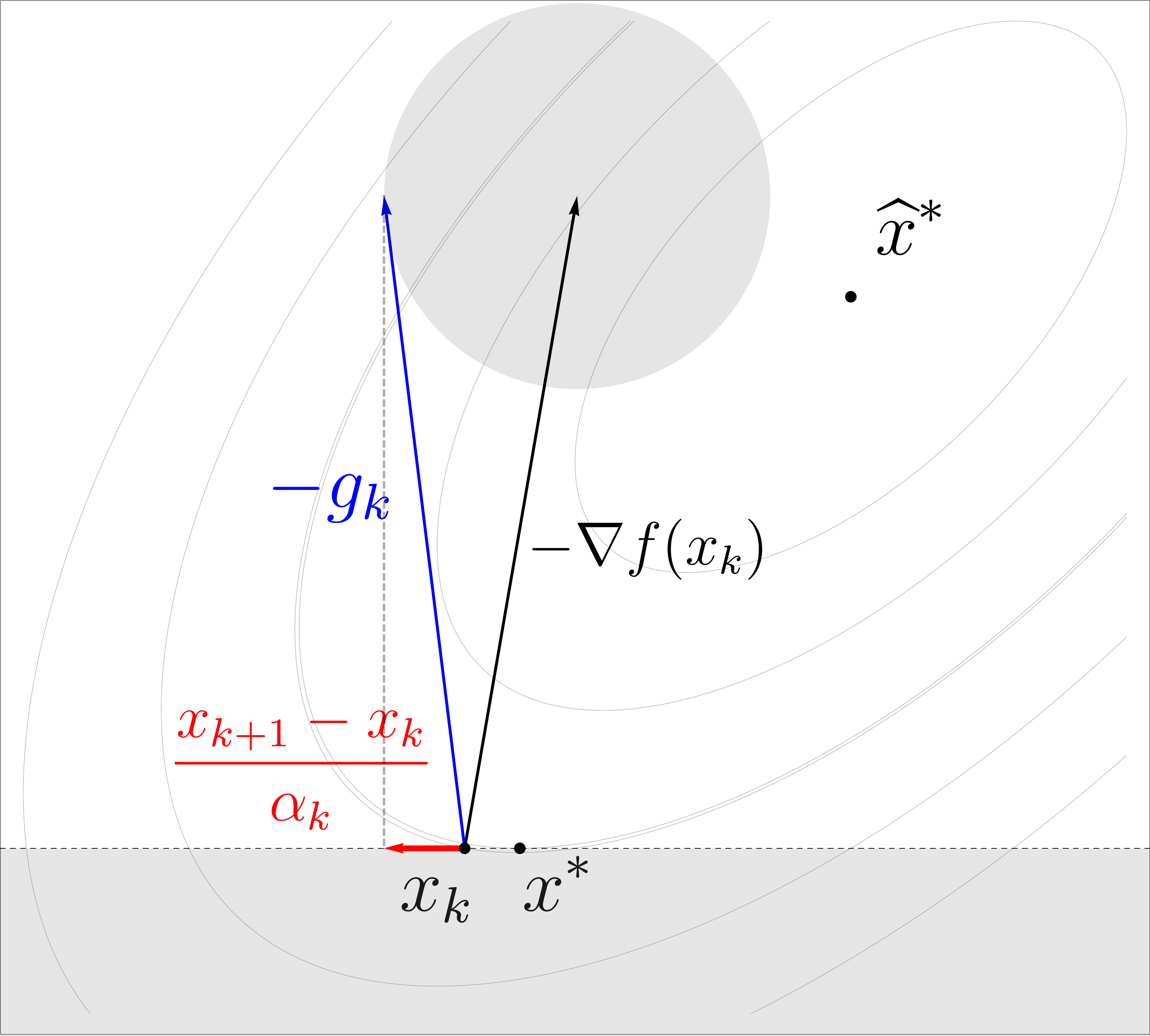}
  \caption{Failure of norm test for constrained problems. }
  \label{fig:failure norm test}
\end{figure}
Thus, since we  are  concerned about the error in the total step, and in this example the step is given by $x_{k+1} - x_k= -\alpha_k P_\Omega[g_k]$,  it makes sense to compare $\| g_k - \nabla f(x_k)\|$ to 
$\|P_\Omega [g_k]\| = \|x_{k+1} -x_k\| /\alpha_k$.

We generalize this idea and propose the following procedure for measuring the quality of the gradient approximation. We first compute a projected step $\overline{x}_{ k+1}= P_\Omega[x_k -\alpha_k g_k]$ based on the current gradient estimate $g_k$, and regard $g_k$ to be acceptable if the following inequality holds:
\begin{align}   \label{newtest}
	 \mathbb{E}[  \| g_k - \nabla f(x_k)\|_2^2] \equiv &\, \varr{k}{g_k} \leq \xi {\nrm{2}{\frac{\EE{k}{\overline x_{k+1}} - x_k}{\alpha_k}}^2} , \quad{\xi >0}.
\end{align}
Otherwise, we compute a more accurate gradient estimate $g_k$, and recompute the step to obtain the new iterate $x_{k+1}$.

In this strategy one must therefore look ahead, suggesting that a convenient framework for the design and analysis of adaptive sampling methods for constrained optimization is the \emph{proximal gradient method}. In addition to its versatility, the proximal gradient method allows us to expand the range of our investigation to include the composite optimization problem
\begin{equation}    \label{proxprob}
	\min_{x \in \mathbb{R}^n} ~ \phi(x) = f(x) + h(x),
\end{equation}
where $f$ is a stochastic function and $h$ is a convex (but not necessarily smooth or finite-valued) function. The constrained optimization problem \eqref{conprob} can be written in the form \eqref{proxprob} by defining  $h$ to be the convex indicator function for the set $\Omega$. 

The  goal of this paper is to design an adaptive mechanism for gradually improving the gradient accuracy that can be regarded as an extension of the norm test \eqref{normt} to  problems \eqref{conprob} and \eqref{proxprob}. We argue in Section~\ref{derivation} that the condition \eqref{newtest}, with $\phi$ replacing $f$, can be used to build such a mechanism within a proximal gradient method. Although  condition \eqref{newtest} appears to be impractical since it involves $\EE{k}{\overline x_{k+1}}$, we show how to approximate it in practice. The proposed algorithm reacts to information observed during the course of the iteration, as opposed to methods that dictate the increase in the gradient size \emph{a priori}. Specifically, it  has been established in \cite{jalilzadeh2018optimal} that for a stochastic proximal gradient method in which the sample size grows  like $a^k$, with $a>1$,  convergence can be assured in the convex case. However, the behavior of the algorithm depends very strongly on the value of $a$, and there are no clear guidelines on how to choose it for a given problem.

\smallskip\noindent{\em N.B.} As this paper was being readied for publication, we became aware that \cite{beiser2020adaptive}, which deals with a similar subject, had just been posted. The two papers differ, however, in various ways in their treatment of the topic.

\subsection{Literature Review}
A deterministic version of the norm test was used by Carter \cite{carter} in the design of a trust region method for unconstrained optimization that employs inexact gradients. Friedlander and Schmidt \cite{friedlander2012hybrid} propose {increasing} the sample size geometrically for the solution of the finite-sum problem, establish a linear convergence result, and report numerical tests with a quasi-Newton method. Byrd et al. \cite{byrd2012sample} studied the expected risk minimization problem and provide a complexity result for the geometric growth condition. That paper also introduces the stochastic version of the norm test \eqref{normt}, and reports results with a Newton-like method. Bollapragada et al. 
\cite{bollapragada2017adaptive} introduced a variant of the norm test, called the {the inner product {test}, which is designed to improve the practical efficiency of the method at the price of weakening the theoretical convergence guarantees.  (An adaption of this test to problems \eqref{conprob} and \eqref{proxprob} is presented in Section \ref{sec:ip}.)
Adaptive sampling methods have also been studied by Cartis and Scheinberg \cite{cartis2015global}, who establish a global rate of convergence of unconstrained optimization methods that (implicitly) satisfy the norm condition. Pasupathy et al. \cite{2014pasglyetal} study sampling rates in stochastic recursions. Roosta et al. \cite{roosta2016sub1,roosta2016sub} analyze sub-sampled Newton methods with adaptive sampling, and De et al. \cite{de2017automated} study automatic inference with adaptive sampling.

There is a large literature on proximal gradient methods for solving composite optimization problems; see e.g. \cite{bertsekas2015convex} and the references therein. Some of these studies consider inexact gradients \cite{NIPS2011_8f7d807e},  but  these studies do not propose an automatic procedure for improving the quality of the gradient.  \footnote{An exception is \cite{beiser2020adaptive}, which as  mentioned above, was released very shortly before this paper was posted.}

\section{Outline of the Algorithm}
Since  the constrained optimization problem \eqref{conprob} is  a special case of the composite problem \eqref{proxprob}, we focus on the latter and state the problem under consideration as 
\begin{equation}
	\min_{x \in \mathbb{R}^n} ~ \phi(x) = f(x) + h(x), \quad\mbox{where} \  f(x) = \EE{\theta \sim \Theta}{{F(x, \theta)}} .  \label{probex}
\end{equation}
 Here, $F(\cdot, \theta): \mathbb{R}^n \rightarrow \mathbb{R}$ is a smooth function,  $\theta$ is a random variable with support $\Theta$, and $h: \mathbb{R}^n \rightarrow \mathbb{R} \cup \{\infty\}$ is a convex (and generally nonsmooth) function. A popular method for solving composite optimization problems is the proximal gradient method (see e.g. \cite{bertsekas2003convex}), which in the context of problem \eqref{probex} is given as
\begin{align}  
	x_{k+1} & \gets \argmin{x \in \mathbb{R}^n}~ f(x_k) + g_k^T (x - x_k) + \frac{1}{2\alpha_k}  \|x-x_k\|^2 + h(x),  \ \  \mbox{with} \ \ 0 < \alpha_k \leq \tfrac{1}{L},
	\label{proxmethod}
\end{align}
where $g_k$ is an unbiased estimator of $\nabla f(x_k)$ and $L$ is a Lipschitz constant defined below. Here and henceforth, $\| \cdot \|$ denotes the Euclidean norm. As is well known, we can also write this iteration as
\begin{equation}
	{x}_{k+1} = \text{prox}_{\alpha_k h}\bpa{x_k - \alpha_k {g}_k} ,
\end{equation} 
where
\begin{align}
	 \text{prox}_{\alpha_k h}(z_k)  = \underset{x \in \mb{R}^d}{\text{argmin}} ~ h(x) + \frac{1}{2\alpha_k}  \nrm{}{x-z_k}^2 .
\end{align}

The proposed adaptive sampling proximal gradient algorithm proceeds in two stages. At a given iterate $x_k$, it first computes a gradient approximation (using the current sample size)  as well as a proximal gradient step. Based on 
information gathered from this step, it computes a second proximal gradient step that determines the new iterate $x_{k+1}$. An outline of this method is given in Algorithm~1.

\newpage
\begin{framed}
\noindent \textbf{Algorithm 1: Outline of {Adaptive Sampling} Algorithm for Solving Problem \eqref{probex}}:

\smallskip
\noindent \textbf{Input}: $x_0$,  sample size $S \in \mathbb{N}^+$, and sequence $\{\alpha_k >0\}$.

\smallskip
\noindent  \textbf{For k=1, \dots}:

 1. Draw $S$  i.i.d. samples $\bst{\theta_0, \theta_1, \cdots, \theta_{S-1}}$ from $\Theta$,   compute 
\begin{equation}  \label{barg}
	\overline{g}_k = \frac{1}{S} \sum_{i=0}^{S-1} ~ \nabla_x F(x, \theta_i),
\end{equation}
~~~~~~~~and the proximal gradient step
\begin{equation}
	\label{x_estimate}
	\overline{x}_{k+1} = \text{prox}_{\alpha_k h}\bpa{x_k - \alpha_k \overline{g}_k} .
\end{equation}
~~~~~2. Determine the new sample size $S_k \geq S$ (see the next section).

 3. \textbf{If} $S_k > S$

~~~~ re-sample $S_k$ i.i.d. samples $\bst{\wh \theta_0, \wh \theta_1, \cdots, \wh \theta_{S_k}-1}$ from $\Theta$, and compute:
\begin{equation}
 g_k=   \frac{1}{S_k} \sum_{i=0}^{{S_k}-1}~ \nabla_x F(x, \wh \theta_i)
 \end{equation}
\begin{equation}
	x_{k+1} = \text{prox}_{\alpha_k h}\bpa{x_k - \alpha_k g_k}
\end{equation}

~~~~\textbf{Else}
      \[x_{k+1}= \overline{x}_{k+1}.\]

4. Set $S \leftarrow S_k$

\noindent  \textbf{End For}
\end{framed}

\medskip
As discussed in Section~\ref{sec:alg}, when $S_k >S$, one can reuse the samples from Step~1, and in Step~3 only gather $(S_k-S)$ additional i.i.d. samples  $\bst{\wh \theta_{S}, \wh \theta_{S+1}, \cdots, \wh \theta_{S_k-1}}$  from $\Theta$. 

The unspecified parts of this algorithm are the steplength sequence $\{\alpha_k\}$ and  the determination of a sample size $S_k$ in Step~2. The analysis in the next section provides the elements for making those decisions.  One requirement of the strategy used in Step~2-3 is that, when $h$ is not present, Algorithm~1 should reduce to the iteration \eqref{iteration}-\eqref{normt}, i.e., to an adaptive sampling gradient method using the norm test to control the sample size.

\section{Derivation of the  Algorithm}    \label{derivation}
To motivate our approach for determining the sample size $S_k$, we begin by deriving a fundamental condition (see  \eqref{eq_test} below) that ensures that the steps are good enough to ensure convergence in expectation.  The rest of the derivation of the algorithm consist of devising a procedure for approximating condition \eqref{eq_test} in practice.

\subsection{A Fundamental Inequality}

We recall that a function $f: \mathbb{R}^n \to \mathbb{R}$ is $\mathbb{\mu}$-strongly convex (with $\mu > 0$) iff
\begin{equation} \label{strong}
  f({\gamma} x + (1-{\gamma})y) \leq {\gamma} f(x) + (1-{\gamma}) f(y) - \frac{\mu}{2}{\gamma} (1-{\gamma}) \nrm{}{y-x}^2 \cm \forall x, y \in \mathbb{R}^n \cm \forall \gamma \in [0, 1].
  \end{equation}
We also have that if $f$ is a strongly convex and differentiable function, then
\begin{equation}  \label{strong2}
 f(x) \geq f(y ) + \nabla f(y)^T(x-y) + \frac{\mu}{2} \| x -y \|^2 \cm \forall x \in \mathbb{R}^n.
 \end{equation}
If a function is continuously differentiable, $\mu$-strongly convex, and has a Lipschitz continuous gradient with Lipschitz constant $L$, we  say that $f$ is $[\mu,L]$--smooth.
We make the following assumptions about problem \eqref{probex}.

\begin{assum}\label{assume1} $f: \mathbb{R}^n \to \mathbb{R}$ is a $[\mu, L]$-smooth function and {$h: \mathbb{R}^n \to {\mathbb{R}} \cup \{\infty\}$}
is a closed convex and proper function.  
\end{assum} 

These assumptions imply that the objective function $\phi$ defined in \eqref{probex} is strongly convex,  and we denote its minimizer by $x^\ast$ {and the minimum objective value by $\phi^*$}.
We consider  the stochastic proximal gradient method \eqref{proxmethod}
where $g_k$ is an unbiased estimator of $\nabla f(x_k)$ adapted to the filtration $\mathbb{T}$  generated as
$	\mathbb{T}_k = \sigma \bpa{x_0, g_0, g_1, \cdots, g_{k-1}}.
$
In other words, we assume that 
\begin{equation} \label{unbiased} \E\bpa{g_k|\mathbb{T}_k} = \nabla f(x_k).
\end{equation}
 For simplicity,  we denote conditional expectation as $\EE{k}{\cdot} = \E\bpa{\cdot|\mathbb{T}_k}$ and conditional variance as $\varr{k}{\cdot} = \E \bbr{\nrm{}{\cdot}^2|\mathbb{T}_k} - \nrm{}{\E \bpa{\cdot|\mathbb{T}_k}}^2$. 
In what follows, we let $f_k, \nabla f_k$ denote $f(x_k), \nabla f(x_k)$, and similarly for other functions. We begin by establishing a technical lemma that provides the first stepping stone in our analysis.

\begin{lem} \label{tech} Suppose that Assumptions~\ref{assume1} hold and that $\{x_k\}$ is generated by iteration \eqref{proxmethod}, where $g_k$ satisfies \eqref{unbiased}. Then,
\begin{align*}
	\EE{k}{\phi_{k+1} - \phi^*} & \leq (1-\mu \alpha_k)(\phi_k - \phi^*) + \EE{k}{(\nabla f_k - g_k)^T (x_{k+1} - x_k)} \\
	& - \bpa{\frac{1}{2\alpha_k} - \frac{L}{2}}\EE{k}{\nrm{}{x_{k+1} - x_k}^2}.
\end{align*}
\end{lem}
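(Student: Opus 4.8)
The plan is to follow the standard descent-lemma argument for proximal gradient methods, but carried out with the inexact gradient $g_k$ in place of $\nabla f_k$ and specialized to the strongly convex setting so as to produce the contraction factor $(1-\mu\alpha_k)$. First I would invoke $L$-smoothness of $f$ to write $f_{k+1} \le f_k + \nabla f_k^T(x_{k+1}-x_k) + \tfrac{L}{2}\|x_{k+1}-x_k\|^2$, add $h_{k+1}$ to both sides, and rewrite the right-hand side in terms of the proximal model $m_k(x) = f_k + g_k^T(x-x_k) + \tfrac{1}{2\alpha_k}\|x-x_k\|^2 + h(x)$ that defines the step \eqref{proxmethod}. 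Regrouping produces exactly the inexact-gradient inner product $(\nabla f_k - g_k)^T(x_{k+1}-x_k)$ together with the quadratic term $(\tfrac{L}{2} - \tfrac{1}{2\alpha_k})\|x_{k+1}-x_k\|^2$, so two of the three terms in the claim appear almost for free.

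Next I would exploit that $m_k$ is $(1/\alpha_k)$-strongly convex and that $x_{k+1}$ is its exact minimizer, which gives $m_k(x_{k+1}) \le m_k(x) - \tfrac{1}{2\alpha_k}\|x - x_{k+1}\|^2$ for every $x$. Taking the conditional expectation $\mathbb{E}_k[\cdot]$ and using the unbiasedness \eqref{unbiased} to replace $g_k$ by $\nabla f_k$ inside $\mathbb{E}_k[m_k(x)]$ for deterministic $x$, I would apply the strong convexity inequality \eqref{strong2} of $f$ at $x_k$ to bound the affine part $f_k + \nabla f_k^T(x-x_k)$ by $f(x) - \tfrac{\mu}{2}\|x-x_k\|^2$, yielding $\mathbb{E}_k[m_k(x)] \le \phi(x) + (\tfrac{1}{2\alpha_k} - \tfrac{\mu}{2})\|x-x_k\|^2$. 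The nonnegative term $\tfrac{1}{2\alpha_k}\|x - x_{k+1}\|^2$ is simply discarded.

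The crux is the choice of the comparison point. I would take the convex combination $x = (1-\mu\alpha_k)x_k + \mu\alpha_k x^*$, which is admissible since $\mu\alpha_k \le \mu/L \le 1$. Bounding $\phi(x)$ by strong convexity of $\phi$ in the form \eqref{strong} with $\gamma = \mu\alpha_k$, and noting $\|x - x_k\|^2 = (\mu\alpha_k)^2\|x^* - x_k\|^2$, the coefficient of $\|x^*-x_k\|^2$ collapses to zero — the value $\gamma = \mu\alpha_k$ is precisely what forces the three $\|x^*-x_k\|^2$ contributions (from strong convexity of $f$, the prox regularizer, and strong convexity of $\phi$) to annihilate. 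What survives is $\mathbb{E}_k[m_k(x_{k+1})] - \phi^* \le (1-\mu\alpha_k)(\phi_k - \phi^*)$; substituting this into the inequality from the first step and subtracting $\phi^*$ gives the stated bound.

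The main obstacle is exactly this cancellation: one must recognize the correct convex-combination parameter and keep careful bookkeeping on the quadratic coefficients so they vanish rather than leaving an uncontrolled residual. A secondary point requiring care is the order of operations with the conditional expectation — the inequality $m_k(x_{k+1}) \le m_k(x) - \cdots$ holds pathwise, and only after taking $\mathbb{E}_k[\cdot]$ may one invoke unbiasedness to turn $g_k$ into $\nabla f_k$; the genuinely stochastic inner-product term $\mathbb{E}_k[(\nabla f_k - g_k)^T(x_{k+1}-x_k)]$ is deliberately left unexpanded, since it is the very quantity that the sampling test \eqref{newtest} is designed to control in the subsequent analysis.
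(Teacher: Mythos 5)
Your proposal is correct and follows essentially the same route as the paper's proof: the descent lemma plus the minimizer property of the proximal model to introduce the $(\nabla f_k - g_k)^T(x_{k+1}-x_k)$ and $\bigl(\tfrac{1}{2\alpha_k}-\tfrac{L}{2}\bigr)\|x_{k+1}-x_k\|^2$ terms, then the comparison point $\wt x_k = (1-\mu\alpha_k)x_k + \mu\alpha_k x^*$ with exactly the cancellation of the $\|x^*-x_k\|^2$ coefficients at $\gamma=\mu\alpha_k$, and the zero-mean cross term handled via $\mathbb{T}_k$-measurability of $\wt x_k$. The only cosmetic differences are that you take conditional expectation before choosing the comparison point (the paper works pathwise and takes $\EE{k}{\cdot}$ at the end) and you invoke the $(1/\alpha_k)$-strong convexity of the model only to discard the extra term, where the paper uses the plain minimizer inequality.
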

\begin{proof}
By Assumptions~\ref{assume1},  we have that for any fixed $x_k \in \mathbb{R}^n$, 
\begin{align*}
	 \phi_{k+1}
	\leq ~& f_k + \nabla f_k^T(x_{k+1} - x_k) + \frac{L}{2} \nrm{}{x_{k+1} - x_k}^2 + h_{k+1} \\
	= ~& f_k + g_k^T(x_{k+1} - x_k) + \frac{1}{2\alpha_k} \nrm{}{x_{k+1} - x_k}^2 + h_{k+1}+ (\nabla f_k - g_k)^T (x_{k+1} - x_k)  \\
	& - \bpa{\frac{1}{2\alpha_k} - \frac{L}{2}}\nrm{}{x_{k+1} - x_k}^2 \\
	\leq ~& f_k + g_k^T(x-x_k) + \frac{1}{2\alpha_k} \nrm{}{x-x_k}^2 +  h(x) +  (\nabla f_k - g_k)^T (x_{k+1} - x_k) \\
	& - \bpa{\frac{1}{2\alpha_k} - \frac{L}{2}}\nrm{}{x_{k+1} - x_k}^2 \cmnt{for any $x \in \mathbb{R}^n$ by definition \eqref{proxmethod} of $x_{k+1}$} \\
	= ~& f_k + \nabla f_k^T(x-x_k) + \frac{1}{2\alpha_k} \nrm{}{x-x_k}^2 +  h(x) +  (\nabla f_k - g_k)^T (x_{k+1} - x_k) \\
	& + (g_k - \nabla f_k)^T(x-x_k) - \bpa{\frac{1}{2\alpha_k} - \frac{L}{2}}\nrm{}{x_{k+1} - x_k}^2 \\
	\leq ~& \phi(x) + \bpa{\frac{1}{2\alpha_k} - \frac{\mu}{2}} \nrm{}{x-x_k}^2 + (\nabla f_k - g_k)^T (x_{k+1} - x_k)   \\
	& + (g_k - \nabla f_k)^T(x-x_k) - \bpa{\frac{1}{2\alpha_k} - \frac{L}{2}}\nrm{}{x_{k+1} - x_k}^2  \cmnt{by \eqref{strong2}}.
\end{align*}
\normalsize
This inequality holds for any $x \in \mathbb{R}^n$. Let us substitute
\begin{equation}   \label{tilde}
x \leftarrow \wt x_k = \beta x^* + (1-\beta) x_k,  \quad\mbox{with}\ \ \beta = \mu \alpha_k,
\end{equation} in the relation above. Recalling the definition \eqref{strong} of strong convexity, we obtain
\begin{align*}
\phi_{k+1} 	\leq ~& \phi(\wt x_k) + \bpa{\frac{1}{2\alpha_k} - \frac{\mu}{2}} \nrm{}{\wt x_k-x_k}^2 + (\nabla f_k - g_k)^T (x_{k+1} - x_k) \\
	& + (g_k - \nabla f_k)^T(\wt x_k-x_k) - \bpa{\frac{1}{2\alpha_k} - \frac{L}{2}}\nrm{}{x_{k+1} - x_k}^2  \\
	\leq ~&  \beta \phi^* + (1-\beta) \phi_k \underbrace{-\frac{\mu}{2} \beta(1-\beta) \|x^*-x_k\|^2 + \bpa{\frac{1}{2\alpha_k} - \frac{\mu}{2}} \nrm{}{\wt x_k-x_k}^2}_\text{term 1} \\
	 &  + (\nabla f_k - g_k)^T (x_{k+1} - x_k) 
	 + (g_k - \nabla f_k)^T(\wt x_k-x_k) - \bpa{\frac{1}{2\alpha_k} - \frac{L}{2}}\nrm{}{x_{k+1} - x_k}^2  .
\end{align*}
Term 1 can be written as
\begin{align}
& -\frac{\mu}{2} \beta(1-\beta) \|x^*-x_k\|^2 + \bpa{\frac{1}{2\alpha_k} - \frac{\mu}{2}}\beta^2 \nrm{}{x^*-x_k}^2 \nonumber \\
 = & -\frac{\mu}{2} \beta(1-\beta) \|x^*-x_k\|^2 + \bpa{\frac{1 -\mu \alpha_k}{2\alpha_k}} \beta^2 \nrm{}{x^*-x_k}^2 \nonumber \\
 = & \ \beta(1-\beta) \|x^*-x_k\|^2  \left(\frac{\beta}{ 2\alpha_k}- \frac{\mu}{2} \right) \nonumber \\
 = & \ 0 \label{really} 
\end{align}
since $\beta = \mu \alpha_k$.
Therefore,
\begin{align*}
	\phi_{k+1} \leq ~& \beta \phi^* + (1-\beta) \phi_k + (\nabla f_k - g_k)^T (x_{k+1} - x_k) + (g_k - \nabla f_k)^T(\wt x_k -x_k) \\
	& - \bpa{\frac{1}{2\alpha_k} - \frac{L}{2}}\nrm{}{x_{k+1} - x_k}^2 .
\end{align*}
%
Taking conditional expectation, noting that $\wt x_k \in \mathbb{T}_k$, and recalling \eqref{unbiased}, we have 
\begin{align*}
	\EE{k}{\phi_{k+1}} & \leq \beta \phi^* + (1-\beta) \phi_k + \EE{k}{(\nabla f_k - g_k)^T (x_{k+1} - x_k)} \\
	& - \bpa{\frac{1}{2\alpha_k} - \frac{L}{2}}\EE{k}{\nrm{}{x_{k+1} - x_k}^2},
\end{align*}
and by the definition of $\beta$ we conclude that
\begin{align*}
	\EE{k}{\phi_{k+1} - \phi^*} & \leq (1-\mu \alpha_k)(\phi_k - \phi^*) + \EE{k}{(\nabla f_k - g_k)^T (x_{k+1} - x_k)} \\
	& - \bpa{\frac{1}{2\alpha_k} - \frac{L}{2}}\EE{k}{\nrm{}{x_{k+1} - x_k}^2}.
\end{align*}
\end{proof}

From this result, we can readily establish conditions under which the proximal gradient iteration, with a fixed steplength $\alpha_k = \alpha$, achieves Q-linear convergence of $\phi_k$, in expectation.

\begin{thm} \label{linearc}  Suppose that Assumptions~\ref{assume1} hold, that $\{x_k\}$ is generated by \eqref{proxmethod} with  $\alpha_k = (1-\eta)/L$ for $\eta \in (0,1)$, and that $g_k$ satisfies \eqref{unbiased}. If we have that for all $k$,
\begin{equation}
	\label{eq_test}
	\alpha_k \EE{k}{(\nabla f_k - g_k)^T (x_{k+1} - x_k)} \leq \frac{\eta}{2} \EE{k}{\nrm{}{x_{k+1} - x_k}^2}
\end{equation}
then 
\begin{equation} \label{linear}
	\EE{k}{\phi_{k+1} - \phi^*} \leq \bbr{1 - (1-\eta) \frac{\mu}{L}} (\phi_k - \phi^*).
\end{equation}
\end{thm}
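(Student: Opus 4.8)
The plan is to treat Theorem~\ref{linearc} as an immediate consequence of Lemma~\ref{tech}: substitute the accuracy test \eqref{eq_test} into the fundamental inequality, and then exploit the fact that the steplength $\alpha_k = (1-\eta)/L$ has been calibrated precisely to annihilate the quadratic term. There is no serious analysis to do here beyond a careful bookkeeping of coefficients, so I would present it as a short computation.

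First I would start from the conclusion of Lemma~\ref{tech}, which already provides the recursion
\begin{align*}
  \EE{k}{\phi_{k+1} - \phi^*} &\leq (1-\mu \alpha_k)(\phi_k - \phi^*) + \EE{k}{(\nabla f_k - g_k)^T (x_{k+1} - x_k)} \\
  &\quad - \bpa{\frac{1}{2\alpha_k} - \frac{L}{2}}\EE{k}{\nrm{}{x_{k+1} - x_k}^2}.
\end{align*}
The test \eqref{eq_test}, after dividing by $\alpha_k$, bounds the cross term by $\tfrac{\eta}{2\alpha_k}\EE{k}{\nrm{}{x_{k+1} - x_k}^2}$. Inserting this bound, the two terms that carry $\EE{k}{\nrm{}{x_{k+1} - x_k}^2}$ merge into a single coefficient, namely $\tfrac{\eta-1}{2\alpha_k} + \tfrac{L}{2}$.

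The crux of the argument is the observation that this coefficient vanishes for the prescribed steplength. Since $\alpha_k = (1-\eta)/L$ gives $1/\alpha_k = L/(1-\eta)$, we get $\tfrac{\eta-1}{2\alpha_k} = -\tfrac{L}{2}$, so $\tfrac{\eta-1}{2\alpha_k} + \tfrac{L}{2} = 0$. Hence the entire quadratic contribution drops out and we are left with $\EE{k}{\phi_{k+1} - \phi^*} \leq (1-\mu \alpha_k)(\phi_k - \phi^*)$. Expanding $\mu\alpha_k = (1-\eta)\mu/L$ then yields exactly the claimed contraction \eqref{linear}.

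I do not expect a genuine obstacle in this proof; the only point requiring care is verifying the sign and cancellation of the merged coefficient, which is what motivated the particular form of $\alpha_k$ in the first place (the steplength is effectively reverse-engineered from the requirement that the $\EE{k}{\nrm{}{x_{k+1}-x_k}^2}$ term disappear once \eqref{eq_test} is used). As a sanity check I would also note that, because $\mu \leq L$ and $\eta \in (0,1)$, the contraction factor $1 - (1-\eta)\mu/L$ lies in $(0,1)$, so the recursion is genuinely linearly convergent rather than vacuous.
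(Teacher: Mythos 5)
Your proof is correct and is exactly the argument the paper intends: Theorem~\ref{linearc} is presented as an immediate consequence of Lemma~\ref{tech} (the paper gives no separate proof), obtained by bounding the cross term via \eqref{eq_test} and noting that the coefficient $\frac{\eta-1}{2\alpha_k}+\frac{L}{2}$ of $\EE{k}{\nrm{}{x_{k+1}-x_k}^2}$ vanishes for $\alpha_k=(1-\eta)/L$. Your closing sanity check that $1-(1-\eta)\mu/L\in(0,1)$ (since $\mu\leq L$) is a sensible addition, though not needed for the statement itself.
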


 We note that when $h = 0$ and the iteration becomes \eqref{iteration},   condition \eqref{eq_test}  reduces  to the norm test \eqref{normt} with $\xi$ given by $\eta/(1-\eta)$.
 
The assumption on $\alpha_k$ in Theorem~\ref{linearc} is fairly standard. The key is inequality \eqref{eq_test}, which is the most general condition we have identified for ensuring linear convergence.
However, it does not seem to be possible to enforce this condition in practice, even approximately,
 for the composite optimization problem (which includes convex constrained optimization). Therefore, we seek an implementable version of \eqref{eq_test}, even if it is more restrictive. {Before doing so, we show that condition \eqref{eq_test} can also be used to establish convergence in the case when $f$ is convex, but not strongly convex.}

\subsection{Convergence for General Convex $f$}
We now show that when $f$ is convex, the sequence of function values $\{\phi(x_k)\}$ converges to the optimal value $\phi^*$ {of problem \eqref{probex}} at a sublinear rate, in expectation.  {To establish this result, we make the following assumptions.}

\begin{assum} \label{assume2} $f: \mathbb{R}^n \to \mathbb{R}$ is convex, differentiable and has an $L-$ Lipschitz continuous gradient, and  $h: \mathbb{R}^n \to \mathbb{R} \cup \{\infty\}$ is a closed, convex and proper function. 
\end{assum} }

We begin by proving a technical lemma.

\begin{lem}
	\label{lem:technical}
	Suppose that Assumptions \eqref{assume2} hold and that $\{x_k\}$ is generated by iteration \eqref{proxmethod}, where $g_k$ satisfies \eqref{unbiased} and $\alpha_k = (1-\eta)/L$ for $\eta \in (0,1)$. If in addition condition \eqref{eq_test} is satisfied, we have that for any given $z \in  \mathbb{T}_k$,
	\begin{equation}
	\E_k[\phi_{k+1}] \leq \phi(z) + \frac{1}{\alpha_k}\E_k\left[(x_k-x_{k+1})^T(x_k - z)\right] - \frac{1}{2\alpha_k}\E_k[\|x_k - x_{k+1}\|^2] .
	\end{equation}
\end{lem}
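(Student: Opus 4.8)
The plan is to mirror the proof of Lemma~\ref{tech}, but to replace the strong-convexity substitution \eqref{tilde} by plain convexity of $f$, and to extract the $z$-dependent terms from the strong convexity of the proximal subproblem rather than from a bare minimizer inequality. I would begin, exactly as in Lemma~\ref{tech}, from the $L$-smoothness bound, valid for every realization of $g_k$,
\[
  \phi_{k+1} \le f_k + \nabla f_k^T(x_{k+1}-x_k) + \frac{L}{2}\|x_{k+1}-x_k\|^2 + h_{k+1}.
\]

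Next I would record the optimality of the prox step. Because $x_{k+1}$ minimizes the $\tfrac{1}{\alpha_k}$-strongly convex map $\Psi(x) = g_k^T(x-x_k) + \tfrac{1}{2\alpha_k}\|x-x_k\|^2 + h(x)$ over $\mathbb{R}^n$, it satisfies the subgradient condition $\tfrac{1}{\alpha_k}(x_k-x_{k+1}) - g_k \in \partial h(x_{k+1})$, whence convexity of $h$ gives, for the fixed $z \in \mathbb{T}_k$,
\[
  h_{k+1} \le h(z) - \Bigl(\tfrac{1}{\alpha_k}(x_k-x_{k+1}) - g_k\Bigr)^T(z-x_{k+1}).
\]
In parallel I would use convexity of $f$ (this is where Assumptions~\ref{assume2} enter, in place of the strongly-convex inequality \eqref{strong2}) to write $f_k \le f(z) + \nabla f_k^T(x_k-z)$.

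Substituting both bounds into the smoothness inequality, the gradient-error terms collapse into the single cross product $(\nabla f_k - g_k)^T(x_{k+1}-z)$, and, after writing $z-x_{k+1}=(z-x_k)+(x_k-x_{k+1})$, the proximity terms reorganize into $\tfrac{1}{\alpha_k}(x_k-x_{k+1})^T(x_k-z) - \tfrac{1}{2\alpha_k}\|x_k-x_{k+1}\|^2$ together with the residual $-\bigl(\tfrac{1}{2\alpha_k}-\tfrac{L}{2}\bigr)\|x_k-x_{k+1}\|^2$. I would then take $\E_k[\cdot]$: decomposing $x_{k+1}-z=(x_{k+1}-x_k)+(x_k-z)$ and invoking unbiasedness \eqref{unbiased} with $x_k,z\in\mathbb{T}_k$ annihilates the component $(\nabla f_k-g_k)^T(x_k-z)$, leaving $\E_k[(\nabla f_k-g_k)^T(x_{k+1}-x_k)]$, which condition \eqref{eq_test} bounds by $\tfrac{\eta}{2\alpha_k}\E_k[\|x_{k+1}-x_k\|^2]$. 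With $\alpha_k=(1-\eta)/L$ the net coefficient of $\E_k[\|x_k-x_{k+1}\|^2]$ is $\tfrac{\eta}{2\alpha_k}-\bigl(\tfrac{1}{2\alpha_k}-\tfrac{L}{2}\bigr)=\tfrac{\eta-1}{2\alpha_k}+\tfrac{L}{2}=0$, and the claimed inequality drops out directly in the required inner-product form.

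The main obstacle is the second step: one must exploit the $\tfrac{1}{\alpha_k}$-strong convexity of $\Psi$ (equivalently, evaluate the subgradient inequality for $h$ at the arbitrary point $z$), not merely the minimizer inequality $\Psi(x_{k+1})\le\Psi(z)$ used in Lemma~\ref{tech}; the bare inequality does not produce the $(x_k-x_{k+1})^T(x_k-z)$ coupling and the bound cannot be closed. Once that term is in hand, the remainder is inner-product bookkeeping and the exact cancellation forced by the choice of $\alpha_k$.
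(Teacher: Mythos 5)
Your proposal is correct and follows essentially the same route as the paper's own proof of Lemma~\ref{lem:technical}: the $L$-smoothness bound on $\phi_{k+1}$, convexity of $f$ at $z$, the prox optimality condition $\tfrac{1}{\alpha_k}(x_k-x_{k+1}) - g_k \in \partial h(x_{k+1})$ combined with convexity of $h$ at $z$, unbiasedness \eqref{unbiased} with $z \in \mathbb{T}_k$ to annihilate the $(g_k-\nabla f_k)^T(z-x_k)$ component, and condition \eqref{eq_test} with $\alpha_k=(1-\eta)/L$ to produce the residual coefficient $-\tfrac{1}{2\alpha_k}$. Your closing remark is also on point (and you correct a sign typo in the paper's stated inclusion): the subgradient inequality evaluated at $z$, not the bare minimizer inequality of Lemma~\ref{tech}, is what yields the coupling $\tfrac{1}{\alpha_k}(x_k-x_{k+1})^T(x_k-z)-\tfrac{1}{2\alpha_k}\|x_k-x_{k+1}\|^2$ needed for the telescoping argument downstream.
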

\begin{proof}
By Assumptions \eqref{assume2}, we have that
	\begin{align*}
	\phi_{k+1} &\leq f_k + \nabla f_k^T (x_{k+1} - x_k) + \frac{L}{2}\|x_{k+1} - x_k\|^2 + h_{k+1}  \\
	&\leq f(z) - \nabla f_k^T(z - x_k) + \nabla f_k^T (x_{k+1} - x_k) + \frac{L}{2}\|x_{k+1} - x_k\|^2 + h_{k+1} \cmnt{by convexity of $f$} \\
	&\leq f(z) - \nabla f_k^T(z - x_k) + \nabla f_k^T (x_{k+1} - x_k) + \frac{L}{2}\|x_{k+1} - x_k\|^2 + h(z) \\
	&\quad - \left(\frac{x_k - x_{k+1}}{\alpha_k} - g_k\right)^T(z - x_{k+1}) \cmnt{by convexity of $h$ and definition of $x_{k+1}$} \\
	&= \phi(z) - \nabla f_k^T(z - x_k) + \nabla f_k^T (x_{k+1} - x_k) + \frac{L}{2}\|x_{k+1} - x_k\|^2 \\
	& \quad - \left(\frac{x_k - x_{k+1}}{\alpha_k} - g_k\right)^T(z - x_k + x_k - x_{k+1}) \\
	&= \phi(z) + (g_k - \nabla f_k)^T(z - x_k) + \frac{1}{\alpha_k}(x_k-x_{k+1})^T(x_k - z) \\
	&\quad + \left(\frac{L}{2} - \frac{1}{\alpha_k}\right)\|x_k - x_{k+1}\|^2 + (\nabla f_k - g_k)^T(x_{k+1} - x_k), \cmnt{rearraging terms} 
	\end{align*}
	where the third inequality follows from the fact that $0 \in g_k + \partial h_{k+1}  + \frac{x_k - x_{k+1}}{\alpha_k}$. 
	Taking conditional expectation and using \eqref{eq_test}, we have
	\begin{align*}
	\E_k[\phi_{k+1}] &\leq \phi(z) + \E_k[(g_k - \nabla f_k)^T(z - x_k)] + \frac{1}{\alpha_k}\E_k[(x_k-x_{k+1})^T(x_k - z)] \\
	&\quad + \left(\frac{L}{2} - \frac{1}{\alpha_k}\right)\E_k[\|x_k - x_{k+1}\|^2] + \E_k[(\nabla f_k - g_k)^T(x_{k+1} - x_k)] \\
	&= \phi(z) + \frac{1}{\alpha_k}\E_k[(x_k-x_{k+1})^T(x_k - z)] + \left(\frac{L}{2} - \frac{1}{\alpha_k}\right)\E_k[\|x_k - x_{k+1}\|^2] \\
	&\quad + \E_k[(\nabla f_k - g_k)^T(x_{k+1} - x_k)] \cmnt{since $z \in  \mathbb{T}_k$ } \\
	&\leq \phi(z) + \frac{1}{\alpha_k}\E_k[(x_k-x_{k+1})^T(x_k - z)] - \left(\frac{1}{\alpha_k} - \frac{L}{2} - \frac{\eta}{2\alpha_k}\right)\E_k[\|x_k - x_{k+1}\|^2] \cmnt{by \eqref{eq_test}} \\
	&=  \phi(z) + \frac{1}{\alpha_k}\E_k[(x_k-x_{k+1})^T(x_k - z)] - \frac{1}{2\alpha_k}\E_k[\|x_k - x_{k+1}\|^2 ,
	\end{align*}
	where the last equality is due to $\alpha_k = \frac{1 - \eta}{L}$. 
\end{proof}	
\begin{thm}
	Suppose that Assumptions \eqref{assume2} hold and that $\{x_k\}$ is generated by iteration \eqref{proxmethod}, where $g_k$ satisfies \eqref{unbiased} and $\alpha_k = \alpha=(1-\eta)/L$ for $\eta \in (0,1)$. If in addition \eqref{eq_test} is satisfied, we have
	\begin{equation}\label{sublinear}
	\E[\phi_k - \phi^*] \leq \frac{L\|x_0 - x^*\|^2}{2(1-\eta) k},
	\end{equation}
	where $x^*$ is any optimal solution of  problem \eqref{probex}. 
\end{thm}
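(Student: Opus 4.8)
The plan is to invoke Lemma~\ref{lem:technical} twice, with two different choices of the free point $z$, and then telescope. First I would apply the lemma with $z = x^*$, which is admissible because $x^*$ is deterministic and hence $\mathbb{T}_k$-measurable. Using the constant steplength $\alpha_k = \alpha = (1-\eta)/L$, the cross term and the quadratic term combine through the elementary identity
\[
\frac{1}{\alpha}(x_k - x_{k+1})^T(x_k - x^*) - \frac{1}{2\alpha}\|x_k - x_{k+1}\|^2 = \frac{1}{2\alpha}\bpa{\|x_k - x^*\|^2 - \|x_{k+1} - x^*\|^2},
\]
which yields the one-step recursion
\[
\E_k[\phi_{k+1} - \phi^*] \leq \frac{1}{2\alpha}\E_k\bbr{\|x_k - x^*\|^2 - \|x_{k+1} - x^*\|^2}.
\]

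Next I would take total expectations (via the tower property) and sum over $k = 0, \dots, K-1$. The right-hand side telescopes, leaving only $\frac{1}{2\alpha}\|x_0 - x^*\|^2$, since the final subtracted term is nonnegative and $x_0$ is deterministic. This gives the averaged bound
\[
\sum_{k=1}^{K} \E[\phi_k - \phi^*] \leq \frac{1}{2\alpha}\|x_0 - x^*\|^2.
\]

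To convert this into a bound on the last iterate, I would show that $\{\E[\phi_k - \phi^*]\}$ is non-increasing. For this I apply Lemma~\ref{lem:technical} a second time, now with $z = x_k$ (which is $\mathbb{T}_k$-measurable). The cross term vanishes and the remaining quadratic term is nonpositive, so $\E_k[\phi_{k+1}] \leq \phi_k$, and taking total expectation gives $\E[\phi_{k+1}] \leq \E[\phi_k]$. Hence each of the $K$ summands above is at least $\E[\phi_K - \phi^*]$, so that $K\,\E[\phi_K - \phi^*] \leq \frac{1}{2\alpha}\|x_0 - x^*\|^2$. Substituting $\alpha = (1-\eta)/L$ and renaming $K$ as $k$ produces exactly \eqref{sublinear}.

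The argument is essentially mechanical once Lemma~\ref{lem:technical} is available; the only genuine idea is its double use. The main obstacle, such as it is, lies in recognizing that the same inequality with $z = x_k$ supplies the monotonicity needed to pass from the telescoped average to a last-iterate rate — without it one would obtain only a bound on the running average (or minimum) of $\E[\phi_k - \phi^*]$. Some care is also required to ensure that both substituted points are $\mathbb{T}_k$-measurable so that the lemma applies, and that all expectations are unrolled correctly through the tower property.
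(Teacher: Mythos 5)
Your proposal is correct and follows essentially the same route as the paper's own proof: apply Lemma~\ref{lem:technical} with $z = x^*$ together with the identity $2(x_k-x_{k+1})^T(x_k-x^*) - \|x_k-x_{k+1}\|^2 = \|x_k-x^*\|^2 - \|x_{k+1}-x^*\|^2$ to telescope, then apply the lemma again with $z = x_k$ to obtain monotonicity of $\E[\phi_k]$ and pass from the averaged bound to the last iterate. Nothing is missing; the double use of the lemma is exactly the paper's argument.
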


\begin{proof}
	From Lemma \ref{lem:technical}, for any $z \in  \mathbb{T}_k$, we have
	\begin{equation*}
	\E_k[\phi_{k+1}] \leq \phi(z) + \frac{1}{\alpha}\E_k\left[(x_k-x_{k+1})^T(x_k - z)\right] - \frac{1}{2\alpha}\E_k[\|x_k - x_{k+1}\|^2] .
	\end{equation*}
	Now substituting $z = x^* \in  \mathbb{T}_k$ and taking full expectations, we have 
	\begin{align*}
	\E[\phi_{k+1} - \phi^*] &\leq  \frac{1}{\alpha}\E\left[(x_k-x_{k+1})^T(x_k - x^*)\right] - \frac{1}{2\alpha}\E[\|x_k - x_{k+1}\|^2] \\
	&=\frac{1}{2\alpha}\E\left[2(x_k-x_{k+1})^T(x_k - x^*) - \|x_k - x_{k+1}\|^2\right] \\
	&=\frac{1}{2\alpha}\E\left[\|x_k - x^*\|^2 - \|x_{k+1} - x^*\|^2\right] .
	\end{align*}
	Summing the above inequality for $k=0$ to $k-1$, we get
	\begin{align*}
	\frac{1}{k} \sum_{t=0}^{k-1} \E[\phi_{t+1} - \phi^*] &\leq \frac{1}{2\alpha k}\E\left[\|x_0 - x^*\|^2 - \|x_{k} - x^*\|^2\right] \\
	&\leq \frac{\|x_0 - x^*\|^2}{2\alpha k} .
	\end{align*}
	By substituting $z =x_k \in \mathbb{T}_k$ in Lemma \ref{lem:technical}, we have that the sequence of expected function values is a deceasing sequence; specifically,
	\begin{align*}
	\E_k[\phi_{k+1}] &\leq \phi_k - \frac{1}{2\alpha}\E_k[\|x_k - x_{k+1}\|^2]. 
	\end{align*}
	Therefore, we have, 
	\begin{align*}
	\E[\phi_{k} - \phi^*] &\leq \frac{1}{k} \sum_{t=0}^{k-1} \E[\phi_{t+1} - \phi^*] \leq \frac{\|x_0 - x^*\|^2 }{2\alpha k}.
	\end{align*}
\end{proof}	

\subsection{A Practical Condition}
To obtain a condition that is more amenable to computation than \eqref{eq_test}, we look for an upper bound for the left hand side of this inequality and a lower bound for the right hand side. Imposing an inequality between these two bounds will imply \eqref{eq_test}.

\begin{thm}
\label{thm_pessimistic_test} 
{Suppose that Assumptions~\ref{assume1} hold, that $\{x_k\}$ is generated by \eqref{proxmethod} with  $\alpha_k = (1-\eta)/L$ for $\eta \in (0,1)$, and that $g_k$ satisfies \eqref{unbiased}. 
If $g_k$ additionally satisfies }
\begin{equation}
	\label{ptest}
	\varr{k}{g_k} \leq \frac{\eta}{2} {\nrm{}{\frac{\EE{k}{x_{k+1}} - x_k}{\alpha_k}}^2},
\end{equation}
then \eqref{eq_test} holds and hence
\begin{equation*}
	\EE{k}{\phi_{k+1} - \phi^*} \leq \bbr{1 - (1-\eta) \frac{\mu}{L}} (\phi_k - \phi^*).
\end{equation*}
If instead of Assumptions~\ref{assume1}, the weaker Assumptions~\ref{assume2} hold, then
\begin{equation*}
\E[\phi_k - \phi^*] \leq \frac{L\|x_0 - x^*\|^2}{2(1-\eta) k},
\end{equation*}
where $x^*$ is any optimal solution of  problem \eqref{probex}. 
\end{thm}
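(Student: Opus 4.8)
The two displayed conclusions of the theorem are exactly the conclusions of Theorem~\ref{linearc} (under Assumptions~\ref{assume1}) and of the sublinear convergence theorem of the previous subsection (under Assumptions~\ref{assume2}), both of which take condition \eqref{eq_test} as their hypothesis. Since the passage from \eqref{ptest} to \eqref{eq_test} uses only unbiasedness \eqref{unbiased}, the definition \eqref{proxmethod} of the step, and convexity of $h$ (never strong convexity of $f$), it suffices to prove the single implication \eqref{ptest} $\Rightarrow$ \eqref{eq_test}; the two convergence statements then follow by directly invoking the respective theorems. The plan is therefore to bound the left-hand side of \eqref{eq_test} from above and the right-hand side from below, and to slot the hypothesis \eqref{ptest} between the two bounds.

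First I would split the left-hand side. Writing $\bar x_{k+1} = \E_k[x_{k+1}]$ and decomposing $x_{k+1} - x_k = (x_{k+1} - \bar x_{k+1}) + (\bar x_{k+1} - x_k)$, the second piece contributes nothing: since $\bar x_{k+1} - x_k$ is $\mathbb{T}_k$-measurable and $\E_k[\nabla f_k - g_k] = 0$ by \eqref{unbiased}, we get $\E_k[(\nabla f_k - g_k)^T(\bar x_{k+1} - x_k)] = 0$. Applying the conditional Cauchy--Schwarz inequality to the remaining piece and using $\E_k[\|\nabla f_k - g_k\|^2] = \varr{k}{g_k}$ and $\E_k[\|x_{k+1} - \bar x_{k+1}\|^2] = \varr{k}{x_{k+1}}$ yields
\[
\E_k[(\nabla f_k - g_k)^T(x_{k+1} - x_k)] = \E_k[(\nabla f_k - g_k)^T(x_{k+1} - \bar x_{k+1})] \leq \sqrt{\varr{k}{g_k}}\,\sqrt{\varr{k}{x_{k+1}}}.
\]

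The crucial step, and the one I expect to be the main obstacle, is to control $\varr{k}{x_{k+1}}$ by $\varr{k}{g_k}$. Writing $x_{k+1} = T(g_k)$ with $T(g) = \mathrm{prox}_{\alpha_k h}(x_k - \alpha_k g)$, nonexpansiveness of the proximal operator of the convex function $h$ gives $\|T(g) - T(g')\| \le \alpha_k \|g - g'\|$, so $T$ is $\alpha_k$-Lipschitz. Combining this with the elementary fact that $\E_k[\|Y - \E_k Y\|^2] \le \E_k[\|Y - c\|^2]$ for any $\mathbb{T}_k$-measurable $c$, taken at $Y = x_{k+1}$ and $c = T(\nabla f_k)$ (which is $\mathbb{T}_k$-measurable), gives
\[
\varr{k}{x_{k+1}} \le \E_k[\|T(g_k) - T(\nabla f_k)\|^2] \le \alpha_k^2\, \E_k[\|g_k - \nabla f_k\|^2] = \alpha_k^2\, \varr{k}{g_k}.
\]
Substituting into the Cauchy--Schwarz bound and multiplying by $\alpha_k$ gives the upper bound $\alpha_k \E_k[(\nabla f_k - g_k)^T(x_{k+1} - x_k)] \le \alpha_k^2\, \varr{k}{g_k}$.

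It then remains to lower-bound the right-hand side of \eqref{eq_test} and close the chain. By Jensen's inequality applied to the convex map $v \mapsto \|v\|^2$, we have $\|\bar x_{k+1} - x_k\|^2 = \|\E_k[x_{k+1} - x_k]\|^2 \le \E_k[\|x_{k+1} - x_k\|^2]$. Finally, rewriting \eqref{ptest} as $\alpha_k^2\, \varr{k}{g_k} \le \tfrac{\eta}{2}\|\bar x_{k+1} - x_k\|^2$ and chaining the inequalities,
\[
\alpha_k \E_k[(\nabla f_k - g_k)^T(x_{k+1} - x_k)] \le \alpha_k^2\, \varr{k}{g_k} \le \tfrac{\eta}{2}\|\bar x_{k+1} - x_k\|^2 \le \tfrac{\eta}{2}\,\E_k[\|x_{k+1} - x_k\|^2],
\]
which is precisely \eqref{eq_test}. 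The linear rate then follows from Theorem~\ref{linearc} under Assumptions~\ref{assume1}, and the sublinear rate from the general-convex theorem under Assumptions~\ref{assume2}, since the derivation above never invoked strong convexity.
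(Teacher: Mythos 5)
Your proof is correct, and it has the same sandwich skeleton as the paper's: kill the cross term by unbiasedness \eqref{unbiased}, upper-bound the left-hand side of \eqref{eq_test} by $\alpha_k^2\,\varr{k}{g_k}$, lower-bound the right-hand side by $\nrm{}{\EE{k}{x_{k+1}}-x_k}^2$ (the paper phrases your Jensen step as the variance decomposition $\EE{k}{\nrm{}{x_{k+1}-x_k}^2}=\nrm{}{\EE{k}{x_{k+1}}-x_k}^2+\varr{k}{x_{k+1}-x_k}$, which is the same inequality), slot \eqref{ptest} in between, and then invoke Theorem~\ref{linearc} and the sublinear-rate theorem, observing as you do that strong convexity plays no role in the implication itself. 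The one genuine difference is the middle step. The paper centers the decomposition at the deterministic point $\wh x_{k+1}=\text{prox}_{\alpha_k h}\bpa{x_k-\alpha_k\nabla f_k}\in\mathbb{T}_k$ and applies pointwise Cauchy--Schwarz plus nonexpansiveness of the prox to get $\EE{k}{(\nabla f_k-g_k)^T(x_{k+1}-\wh x_{k+1})}\le\alpha_k\,\varr{k}{g_k}$ in one line. You instead center at the conditional mean $\EE{k}{x_{k+1}}$, apply Cauchy--Schwarz at the level of expectations, and then control $\varr{k}{x_{k+1}}$ via the fact that the conditional mean minimizes mean-squared distance over $\mathbb{T}_k$-measurable comparators --- with $\wh x_{k+1}$ as the comparator. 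Note that your route does not actually avoid $\wh x_{k+1}$, and it lands on exactly the same final bound $\alpha_k^2\,\varr{k}{g_k}$, so the two arguments are equivalent in strength; what your detour buys is the intermediate estimate $\varr{k}{x_{k+1}}\le\alpha_k^2\,\varr{k}{g_k}$ (the variance of the proximal step is dominated by the variance of the gradient estimator), which is of mild independent interest, at the cost of one extra appeal to the variational property of the mean. A cosmetic caution: you write $\overline{x}_{k+1}$ for $\EE{k}{x_{k+1}}$, but the paper already reserves $\overline{x}_{k+1}$ for the trial step \eqref{x_estimate} computed from $\overline{g}_k$; pick a different symbol to avoid a clash.
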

\begin{proof}
 We first note that in \eqref{eq_test}, 
\begin{align*}
	\EE{k}{\nrm{}{x_{k+1} - x_k}^2}  = ~&\nrm{}{\EE{k}{x_{k+1}} - x_k}^2 + \varr{k}{x_{k+1} - x_k} \\
	\geq ~& \nrm{}{\EE{k}{x_{k+1}} - x_k}^2 , 
\end{align*}
which is a quantity that we can approximate with a sample estimation; as we argue in {Section~\ref{estimation}.}

On the other hand, let
\begin{equation*}
	\wh x_{k+1} = \text{prox}_{\alpha_k h}\bpa{x_k - \alpha_k \nabla f_k} \in \mathbb{T}_k .
\end{equation*} 
Then, {since the prox operator is a contraction mapping,}
\begin{align*}
	& \EE{k}{(\nabla f_k - g_k)^T (x_{k+1} - x_k)} \\
	& ~~~= ~\EE{k}{(\nabla f_k - g_k)^T (x_{k+1} - \wh x_{k+1})} +
	\EE{k}{(\nabla f_k - g_k)^T (\wh x_{k+1} - x_k)}  \\
	& ~~~= ~\EE{k}{(\nabla f_k - g_k)^T (x_{k+1} - \wh x_{k+1})}  \cmnt{since $\wh x_{k+1} \in \mathbb{T}_k$}  \\
	& ~~~\leq  ~\EE{k}{\nrm{}{\nabla f_k - g_k} \nrm{}{x_{k+1} - \wh x_{k+1}}} \\
	 & ~~~=  ~\EE{k}{\nrm{}{\nabla f_k - g_k} \nrm{}{\text{prox}_{\alpha_k h}\bpa{x_k - \alpha_k g_k} - \text{prox}_{\alpha_k h}\bpa{x_k - \alpha_k \nabla f_k}}} \\
	& ~~~\leq  ~\alpha_k \EE{k}{\nrm{}{\nabla f_k - g_k}^2} \\
	& ~~~= \alpha_k \varr{k}{g_k}.
\end{align*}
Thus, we have obtained both
\begin{equation*}
	\frac{\eta}{2} {\nrm{}{\EE{k}{x_{k+1}} - x_k}^2} \leq \frac{\eta}{2} \EE{k}{\nrm{}{x_{k+1} - x_k}^2}
\end{equation*}
and
\begin{equation*}
	\alpha_k \EE{k}{(\nabla f_k - g_k)^T (x_{k+1} - x_k)} \leq \alpha_k^2 \varr{k}{g_k}.
\end{equation*}
Therefore, if we require that
\begin{align*}
	& \varr{k}{g_k} \leq \frac{\eta}{2} {\nrm{}{\frac{\EE{k}{x_{k+1}} - x_k}{\alpha_k}}^2} ,
\end{align*}
it follows that condition \eqref{eq_test} is satisfied.
\end{proof}

The significance of Theorem \ref{thm_pessimistic_test} is that it establishes the convergence of the algorithm under condition \eqref{ptest} which, although being more restrictive than condition \eqref{eq_test}, can be approximated empirically, as shown in Section~\ref{estimation}. Again, it is reassuring that when $h = 0$, condition \eqref{ptest} reduces to the  norm test \eqref{normt}. 

\subsection{Choice of the Sample Size $S_k$}   \label{estimation}
We now discuss how to ensure that condition  \eqref{ptest} is satisfied. 
{At iterate $x_k$, suppose we select $S_k$  i.i.d. samples $\bst{\theta_0, \theta_1, \cdots, \theta_{S_k-1}}$ from $\Theta$,  and set  
\begin{equation} \label{gsum}
	g_k = \frac{1}{S_k} \sum_{i=0}^{S_k-1} ~ \nabla_x F(x_k, \theta_i).
\end{equation}
Clearly, \eqref{unbiased} holds and 
the variance of $g_k$ is given by  
\begin{align}  
	\varr{k}{g_k} 
	&   =  \frac{\E_k\left[\|\nabla_x F(x_k, \theta) - \nabla f(x_k)\|^2\right]}{S_k} .\nonumber 
\end{align}
Therefore, \eqref{ptest} holds if $S_k$ satisfies 
\begin{equation}
\frac{ \E_k \left[\|\nabla_x F(x_k, \theta)  - \nabla f(x_k)\|^2\right]}{S_k} \leq \frac{\eta}{2} {\nrm{}{\frac{\EE{k}{x_{k+1}} - x_k}{\alpha_k}}^2},
\end{equation}
or 
\begin{equation}  \label{populations}
S_k \geq 
\E_k\left[\|\nabla_x F(x_k, \theta) - \nabla f(x_k)\|^2\right] \Bigg / \frac{\eta}{2} {\nrm{}{\frac{\EE{k}{x_{k+1}} - x_k}{\alpha_k}}^2}.
\end{equation}
This is the theoretical condition suggested by our analysis.  Based on this condition we can state
\begin{cor}
\label{sample_corollary}
Suppose that Assumptions~\ref{assume1} hold, that $\{x_k\}$ is generated by \eqref{proxmethod} with  $\alpha_k = (1-\eta)/L$ for $\eta \in (0,1)$ and with $g_k$ given by \eqref{gsum}.  If the sample sizes $S_k$ are chosen to satisfy \eqref{populations} for all $k$,
then \eqref{eq_test} is satisfied and hence \eqref{linear} holds. 
If instead of Assumptions~\ref{assume1}, the weaker Assumptions~\ref{assume2} are satisfied, then \eqref{sublinear} holds.
\end{cor}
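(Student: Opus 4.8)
The plan is to recognize that this corollary is essentially a restatement of Theorem~\ref{thm_pessimistic_test}: I will show that the sample-size rule \eqref{populations} is algebraically equivalent to the variance condition \eqref{ptest}, and then invoke that theorem to obtain both convergence rates. No new analysis is required beyond computing the variance of a sample average and performing a one-line rearrangement, so the corollary serves to translate the abstract hypothesis \eqref{ptest} into a statement about how large $S_k$ must be.

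First I would compute the conditional variance of the estimator \eqref{gsum}. Because $g_k$ is the average of $S_k$ conditionally i.i.d.\ copies of $\nabla_x F(x_k,\theta)$, each unbiased for $\nabla f_k$, independence of the summands gives
\begin{equation*}
	\varr{k}{g_k} = \frac{\E_k\bigl[\|\nabla_x F(x_k,\theta) - \nabla f_k\|^2\bigr]}{S_k},
\end{equation*}
which is exactly the identity recorded just before \eqref{populations}. Substituting this expression for $\varr{k}{g_k}$ into \eqref{ptest} and clearing the factor $1/S_k$, the inequality \eqref{ptest} becomes precisely the lower bound \eqref{populations} on $S_k$. Hence \eqref{populations} holding for every $k$ is equivalent to \eqref{ptest} holding for every $k$. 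With \eqref{ptest} in force, Theorem~\ref{thm_pessimistic_test} applies directly: under Assumptions~\ref{assume1} it yields \eqref{eq_test} and therefore the linear rate \eqref{linear}, while under the weaker Assumptions~\ref{assume2} it yields the sublinear rate \eqref{sublinear}, completing the argument.

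The only point I expect to require care — and it is conceptual rather than computational — is that the right-hand side of \eqref{populations} contains $\E_k[x_{k+1}]$, which itself depends on the unknown $S_k$ through the nonlinear map $x_{k+1} = \text{prox}_{\alpha_k h}(x_k - \alpha_k g_k)$; consequently \eqref{populations} is an implicit condition on $S_k$, and it involves the population quantities $\E_k[\|\nabla_x F(x_k,\theta)-\nabla f_k\|^2]$ and $\E_k[x_{k+1}]$, neither of which is available in closed form in practice. For the purpose of the corollary this causes no difficulty, since \eqref{populations} is taken as a hypothesis and I need only verify that it implies \eqref{ptest}; I would not attempt any existence or fixed-point argument here. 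Making \eqref{populations} operational instead requires replacing these expectations by sample estimates, which is the subject of the practical discussion that follows.
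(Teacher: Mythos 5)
Your proposal is correct and follows essentially the same route as the paper: the paper likewise computes $\varr{k}{g_k} = \E_k\left[\|\nabla_x F(x_k,\theta)-\nabla f(x_k)\|^2\right]/S_k$ for the sample average \eqref{gsum}, observes that substituting this into \eqref{ptest} and rearranging yields exactly \eqref{populations}, and then invokes Theorem~\ref{thm_pessimistic_test} to obtain \eqref{linear} under Assumptions~\ref{assume1} and \eqref{sublinear} under Assumptions~\ref{assume2}. Your closing caveat---that \eqref{populations} is an implicit condition involving population quantities and is therefore only made operational via the sample estimates---mirrors the paper's own discussion leading to \eqref{popvar} and \eqref{sfullnew}, so no gap remains.
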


 In practice, we need to estimate both quantities on the right hand side of \eqref{populations}
As  was done e.g. in  \cite{bollapragada2018adaptive,byrd2012sample} the population variance term in the numerator can be approximated by a sample average:
 \begin{equation}  
	  \E_k  \left[\|\nabla_x F(x_k, \theta ) - \nabla f(x_k)\|^2\right] 
	 \approx \frac{1}{S-1} \sum_{i=0}^{S-1} \nrm{}{\nabla_x F(x_k, \theta_i) - \overline{g}_k}^2 , \label{popvar}
\end{equation}
where $\overline{g}_k$ is defined in \eqref{barg}.
We handle the denominator on the right hand side of \eqref{populations} by approximating 
$\|\mathbb{E}_k[x_{k+1}]-x_k \|$ with the norm of the trial step $\|\overline{x}_{k+1}-x_k\|$  defined in \eqref{x_estimate}, i.e.,
\begin{equation*}
	\overline{x}_{k+1} = \text{prox}_{\alpha_k h}\bpa{x_k - \alpha_k \overline{g}_k}.
\end{equation*}
This is somewhat analogous to the approximations made in the unconstrained case  in  \cite{bollapragada2018adaptive,byrd2012sample}, the main difference being  the presence here of the prox operator, which is a contraction.
Given this, we can replace \eqref{populations} by
\begin{equation} 
	\label{sfullnew}
	S_k \geq \frac{1}{S-1} \sum_{i=0}^{S-1} \nrm{}{\nabla_x F(x, \theta_i) - \overline{g}_k}^2 \Bigg / \frac{\eta}{2} \nrm{}{\frac{\overline{x}_{k+1}-x_k}{\alpha_k}}^2.
\end{equation}
Our algorithm will use condition \eqref{sfullnew} to control the sample size.

\subsection{The Practical Adaptive Sampling Algorithm}
\label{sec:alg}
We now summarize the algorithm proposed in this paper, which employs the aforementioned approximations.
\bigskip
\begin{framed}
\noindent \textbf{Algorithm 2: Complete  Algorithm for Solving Problem \eqref{probex}}:

\smallskip
\noindent \textbf{Input}: $x_0$, initial sample size $S \in \mathbb{N}^+$, and sequence $\{\alpha_k >0\}$.

\smallskip
\noindent  \textbf{For k=1, \dots}:

 1. Draw $S$  i.i.d. samples $\bst{\theta_0, \theta_1, \cdots, \theta_{S-1}}$ from $\Theta$,   compute 
\begin{equation*}
	\overline{g}_k = \frac{1}{S} \sum_{i=0}^{S-1} ~ \nabla_x F(x, \theta_i),
\end{equation*}
~~~~~~~~~and trial proximal gradient step
\begin{equation}
	\label{x_full}
	\overline{x}_{k+1} = \text{prox}_{\alpha_k h}\bpa{x_k - \alpha_k \overline{g}_k} .
\end{equation}
~~~~~2. Compute
\begin{equation*}
	\label{sfull}
	a = \frac{1}{S-1} \sum_{i=0}^{S-1} \nrm{}{\nabla_x F(x, \theta_i) - \overline{g}_k}^2 \Bigg / \frac{\eta}{2} \nrm{}{\frac{\overline{x}_{k+1}-x_k}{\alpha_k}}^2
\end{equation*}
~~~~~~~~~and set
\begin{equation} \label{sfinal}
    S_k= \max \{a, S\} .
\end{equation}

3. If  $S_k > S$, choose $(S_k-S)$ additional i.i.d. samples  $\bst{ \theta_{S}, \theta_{S+1}, \cdots,  \theta_{S_k-1}}$  from $\Theta$,  and compute:
\begin{equation}
 g_k=   \frac{1}{S_k} \sum_{i=0}^{{S_k}-1}~ \nabla_x F(x, \theta_i)
 \end{equation}
\begin{equation}
	x_{k+1} = \text{prox}_{\alpha_k h}\bpa{x_k - \alpha_k g_k}
\end{equation}
~~~~~~~~\textbf{Else}
\[x_{k+1}= \overline{x}_{k+1}.\]

4. Set $S \leftarrow S_k$

\noindent  \textbf{End For}
\end{framed}

\medskip


As noted above, for large $S$, the choice of ${S_k}$ given by \eqref{sfinal} approximately ensures the condition \eqref{ptest} is satisfied. Computing ${S_k}$  involves one evaluation of the proximal operator as well as the evaluation $S$ stochastic gradients.


\section{Using an Inner-Product Test in Place of the Norm Test}
\label{sec:ip}
 In the unconstrained setting, Bollapragada et al \cite{bollapragada2018adaptive} have observed that the norm test, although endowed with optimal theoretical convergence rates, is too demanding  in terms of sample size requirements. They derived a practical test called the \emph{inner-product test}, which ensures that the search directions are descent directions with high probability,  and performs well with smaller sample sizes. In this section, we extend these ideas to the constrained (or composite) optimization settings, and derive the equivalent inner-product test  for adaptively controlling the sample sizes.

The goal is to choose the sample sizes such that the algorithm step provides descent with high probability. 
We would like to choose a sample size so that $\bar{d_k} =(\bar{x}_{k+1}-x_k)/\alpha_k$ provides decrease on the objective approximation $ \nabla f (x_k)^T{d} + h(x_k+ {d}), $ and specifically so that  $ \nabla f (x_k)^T\bar{d_k} + h(x_k+ \bar{d_k}) -h(x_k) \leq \beta (\bar{g_k}^T \bar{d_k} + h(x_k+ \bar{d_k}) -h(x_k))$ for some $\beta \in (0,1)$. This means we want to satisfy
\begin{equation} \label{ip}
 (\nabla f (x_k) - \bar{g_k}  ) ^T\bar{d_k}  \leq -(1-\beta)\left( \bar{g_k}^T \bar{d_k}+h(x_k+ \bar{d_k})-h(x_k) \right) .
\end{equation}
To estimate the left hand side of \eqref{ip}, note that in general, given a vector $p\in \R^n$, since $\EE{k}{(\bar{g}- \nabla f(x_k))^T p}=0$, we can estimate the size of the quantity $(\bar{g_k} - \nabla f (x_k)) ^Tp$ by estimating the variance of $(\bar{g_k} - \nabla f (x_k)) ^Tp $.  Since the initial sample size is $S$ this is given by
\begin{equation}     \label{ipvar}
\varr{k}{\bar{g}_k^Tp} \approx \frac{1}{S_k} \frac{1}{S-1} \sum_{i=0}^{S-1} \left((\nabla_x F(x, \theta_i) - \overline{g}_k)^T p\right )^2 .
\end{equation}
We use this estimate in \eqref{ip} with $p=\bar{d_k} $ and get the condition
\begin{equation}    \label{ipsample}
S_k \geq  \frac{1}{S-1} \sum_{i=0}^{S-1} \left((\nabla_x F(x, \theta_i) - \overline{g}_k)^T \bar{d}_k\right )^2 \Bigg /
(1-\beta)^2(\bar{g_k}^T \bar{d_k}+h(x_k+ \bar{d_k})-h(x_k) )^2 .
\end{equation}
where $(1-\beta)^2$ is analogous to $\eta/2$ in \eqref{sfullnew}.
We are aware that, in using \eqref{ipvar} with $p=\bar{d_k} $  , we are treating $\bar{g_k}$ and $\bar{d_k}$ as independent while they are not, but the practical success of the inner product test in  \cite{bollapragada2018adaptive} indicates that this approach is  worthy of exploration.  

We also note that in a problem with convex constraints, where  $h(\cdot)$  involves a convex indicator function with possibly infinite values, the algorithm will only generate feasible points $x_k$ and $x_k+ \bar{d_k}$, so that in the above discussion $h$ only takes on finite values at those points.

The version of our algorithm using this approach consists of following Algorithm 2 with the right hand side of \eqref{ipsample} used in place of the formula for $a$ in Step 2.  

\section{Numerical Experiments}
We conducted numerical experiments to illustrate the performance of the proposed algorithms. We consider binary classification problems where the objective function is given by the logistic loss  with $\ell_1$-regularization:
\begin{equation} \label{logistic-loss}
\phi(x)=\frac{1}{N} \sum_{i=1}^{N}\log(1 + \exp(-y^ix^Tz^i)) + {\lambda}\|x\|_1.
\end{equation}  
Here $\{(z^i,y^i),~i=1,\ldots, N\}$ are the input output data pairs, and the regularization parameter  is chosen as $ \lambda = {1}/{N}$. This problem falls into the general category of minimizing composite optimization problems of the form \eqref{proxprob}, where $f(x) = \E\left[\log(1 + \exp(-y^ix^Tz^i))\right]$, with expectation taken over a discrete uniform probability distribution defined on the dataset, and $h(x) = \lambda \|x\|_1$.
We use the data sets listed in Table~\ref{tab:data_sets}. 
\begin{table}[htp]
	\centering
	\begin{tabular}{|c||c|c|c|}
		\hline
		Data Set  & Data Points $N$ & Variables $d$ & Reference \\ \hline
		\texttt{covertype} & 581012       & 54          &           \cite{blackard1999comparative}\\ 
		\texttt{gisette}   & 6000		   & 5000		 &			\cite{guyon2004result} \\
		\texttt{ijcnn}     & 35000         & 22         &          \cite{Lichman2013} \\ 
		\texttt{MNIST}    & 60000         & 784         &      \cite{lecun2010mnist}     \\    
		\texttt{mushrooms} & 8124          & 112         &          \cite{Lichman2013} \\
		\texttt{sido}     & 12678         & 4932         &          \cite{Lichman2013} \\
		\hline
	\end{tabular}
	\caption{Characteristics of the binary datasets used in the experiments.}
	\label{tab:data_sets}
\end{table} 


We implemented three different variants of proximal stochastic gradient methods where the batch sizes are either: (1) continuously increased at a geometric rate (labelled \texttt{GEOMETRIC}), i.e.,
\begin{equation}  \label{greene}
	S_k = \left \lceil S_0 \left(1 + \gamma \right)^k \right \rceil;
\end{equation}
where $\gamma>0$ is a parameter that will be varied in the experiments;
(2) adaptively chosen based on the norm test \eqref{sfullnew} (labelled \texttt{NORM}); or (3) adaptively chosen based on the inner-product test \eqref{ipsample} (labelled \texttt{IP}). The steplength parameter $\alpha_k$ in each method is chosen as the number in  the set $\{2^{-10}, 2^{-7}, \cdots, 2^{15}\}$ that leads to best performance. The initial sample size was set to $S_0 = 2$. The methods are terminated if $\|x_{k+1} - x_k\| / \alpha_k \leq 10^{-8}$ or if 100 epochs (passes through entire dataset) are performed. An approximation $\phi^*$ of the optimal function value was computed for each problem by running the deterministic proximal gradient method for 50,000 iterations.

Figures~\ref{fig:mushroom_func} and \ref{fig:mushroom_batchsizes} report the performance of the three methods for the dataset \texttt{mushroom}, for various values of the parameter $\gamma$ in \eqref{greene} and $\eta$ in
\eqref{sfullnew} and \eqref{ipsample} ($\eta$ in \eqref{ipsample} corresponds to $2(1-\beta)^2$). Figure~\ref{fig:mushroom_func}, the vertical axis measures the optimality gap, $\phi(x) - \phi^*$, and the horizontal axis measures the number of effective gradient evaluations, defined as ${\sum_{j=0}^k~S_j}/{N}$. In Figure~\ref{fig:mushroom_batchsizes}, the vertical axis measures the batchsize as a fraction of total number of data points $N$, and the horizontal axis measures the number of iterations. The results for other datasets in Table~\ref{tab:data_sets} can be found in Appendix \ref{addnumerical}.

We observe that the inner product test is the most efficient in terms of effective gradient evaluations, which is indicative of the total computational work and CPU time. For the geometric strategy, smaller values of $\gamma$ typically lead to better performance, as they prevent the batch size from growing too rapidly. The norm test gives a performance comparable to the best runs of the geometric strategy, but the latter has much higher variability. In other words, whereas the geometric strategy can be quite sensitive to the choice of $\gamma$, the norm and inner product tests are fairly insensitive to the choice of $\eta$.

\begin{figure}[htp]
    \centering
    \includegraphics[width=0.45\linewidth]{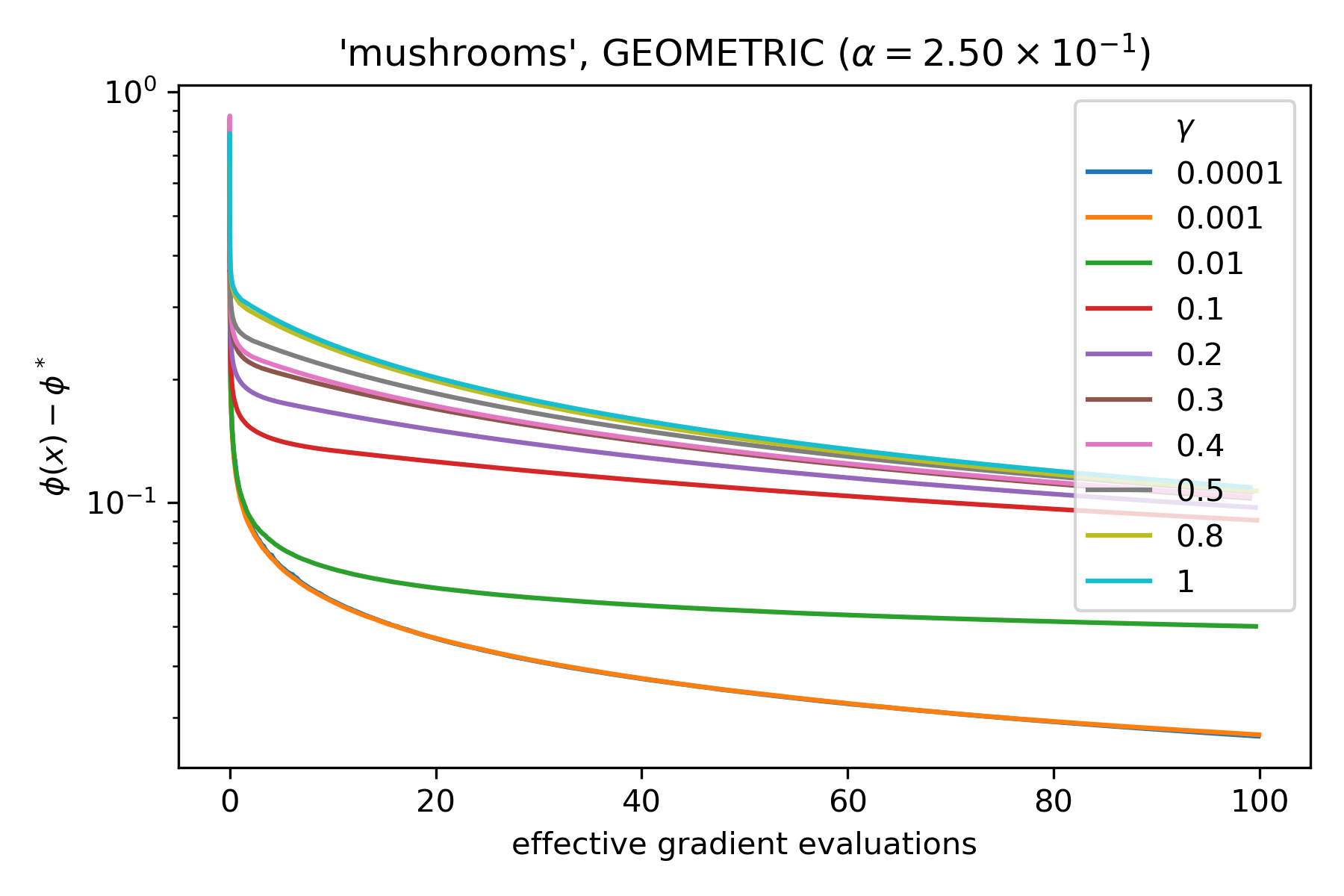}
	\includegraphics[width=0.45\linewidth]{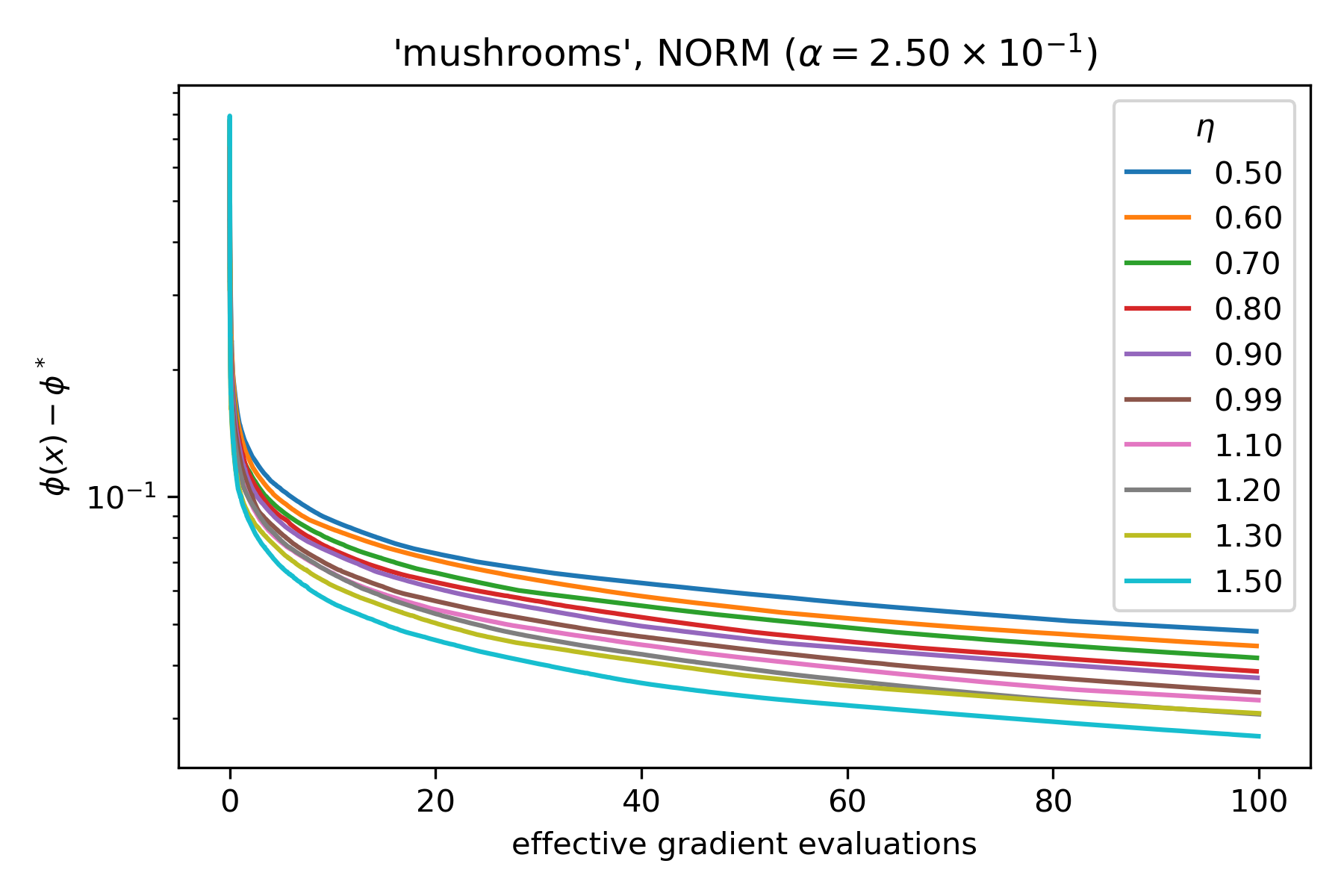} \\
	\includegraphics[width=0.45\linewidth]{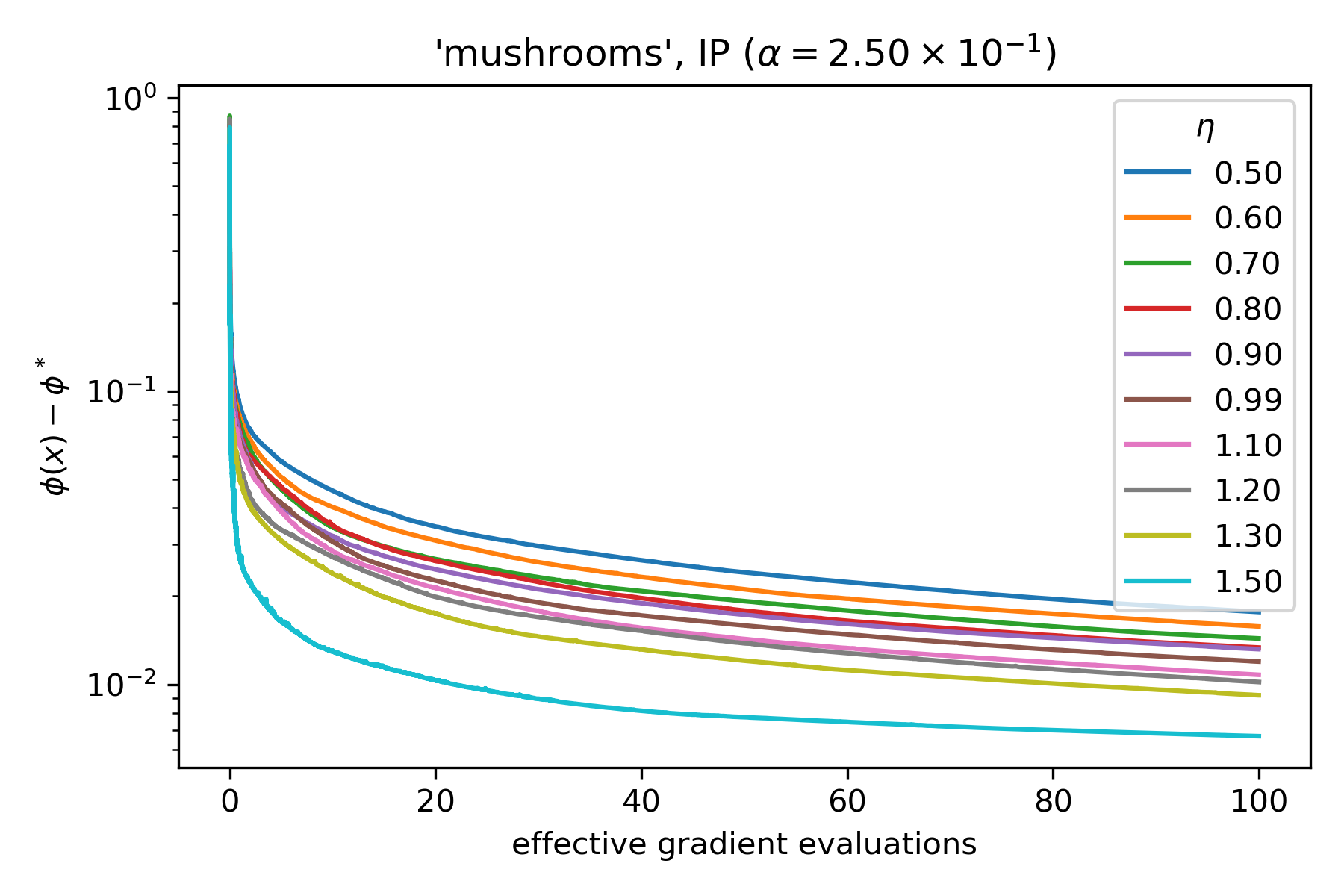}
    \includegraphics[width=0.45\linewidth]{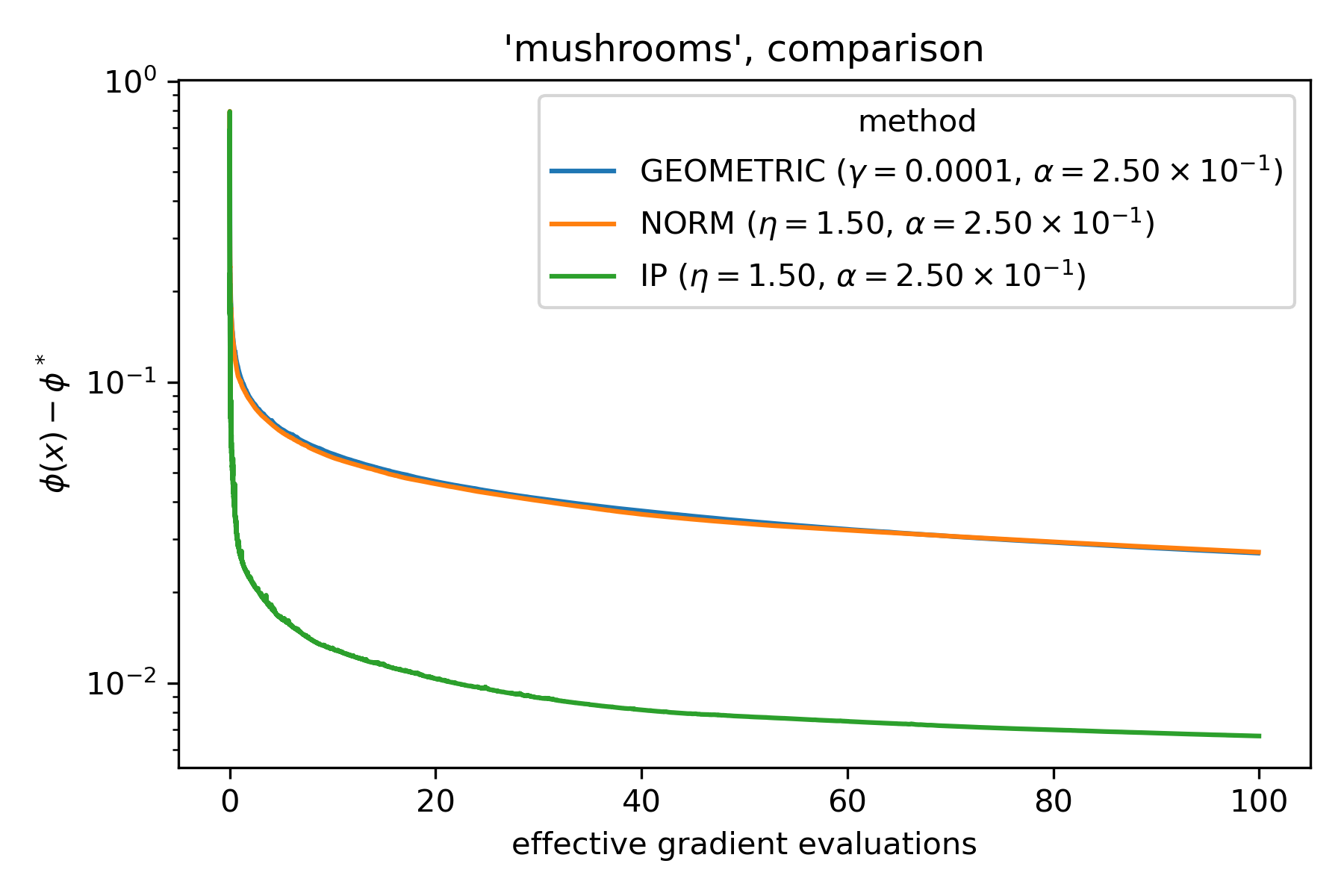} \\
    \caption{Optimality gap $\phi(x_k) - \phi^*$ against effective gradient evaluations on dataset \texttt{mushrooms}, with different strategies to control batch size: geometric increase (top left), norm test (top right), inner-product test (bottom left), and comparison between the best run for each method (bottom right). 
    }
    \label{fig:mushroom_func}
\end{figure}

\begin{figure}[htp]
    \centering
    \includegraphics[width=0.45\linewidth]{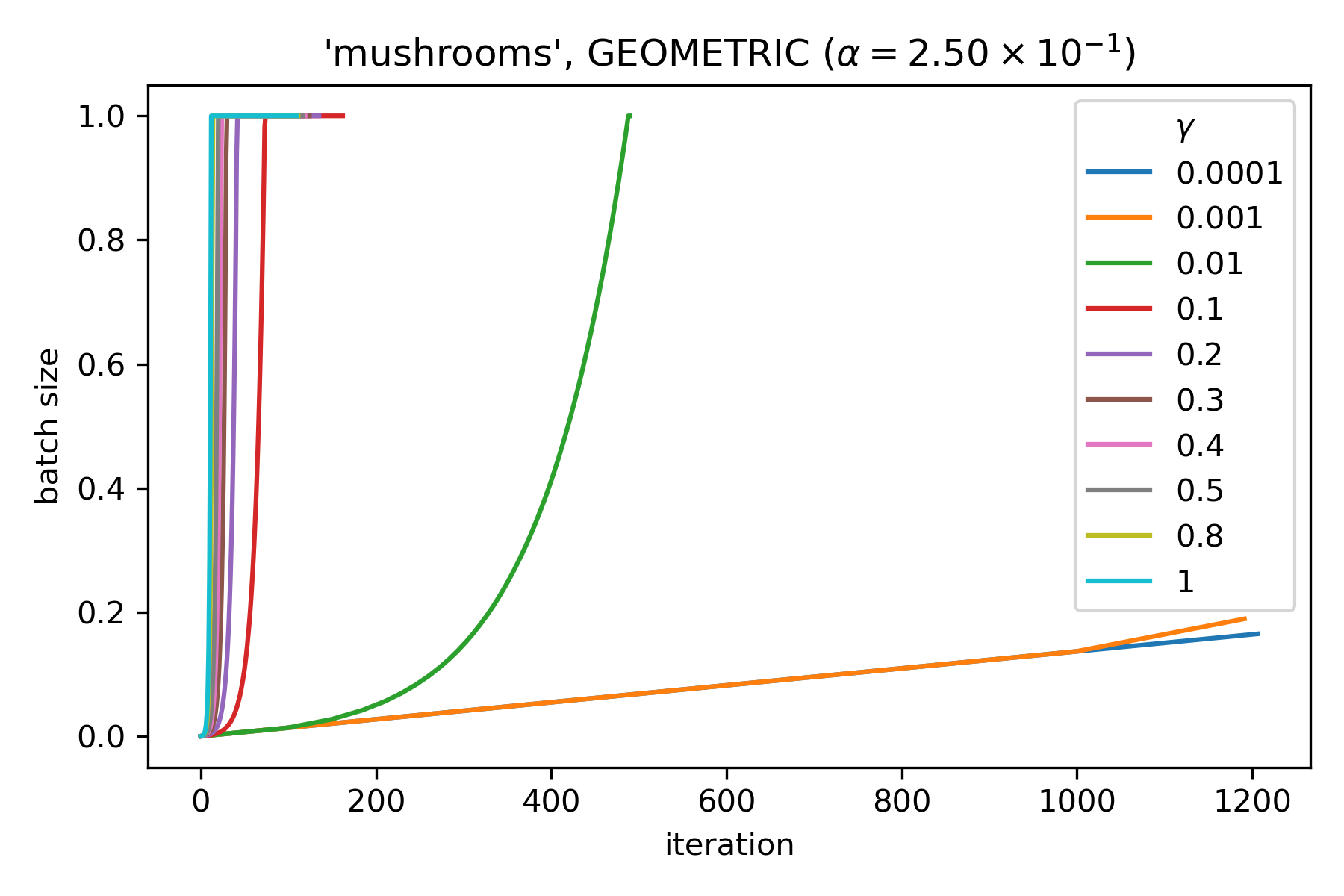}
	\includegraphics[width=0.45\linewidth]{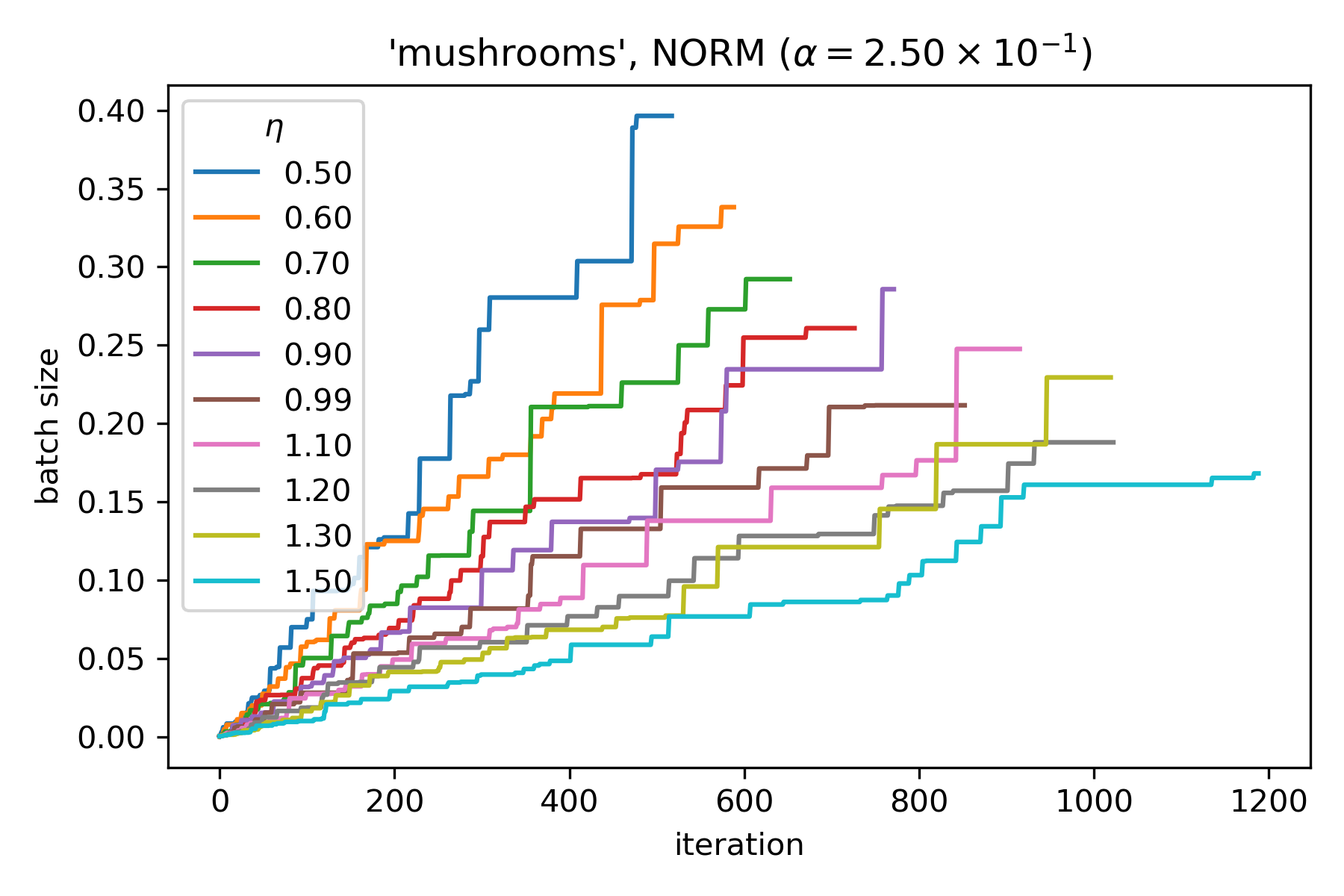} \\
	\includegraphics[width=0.45\linewidth]{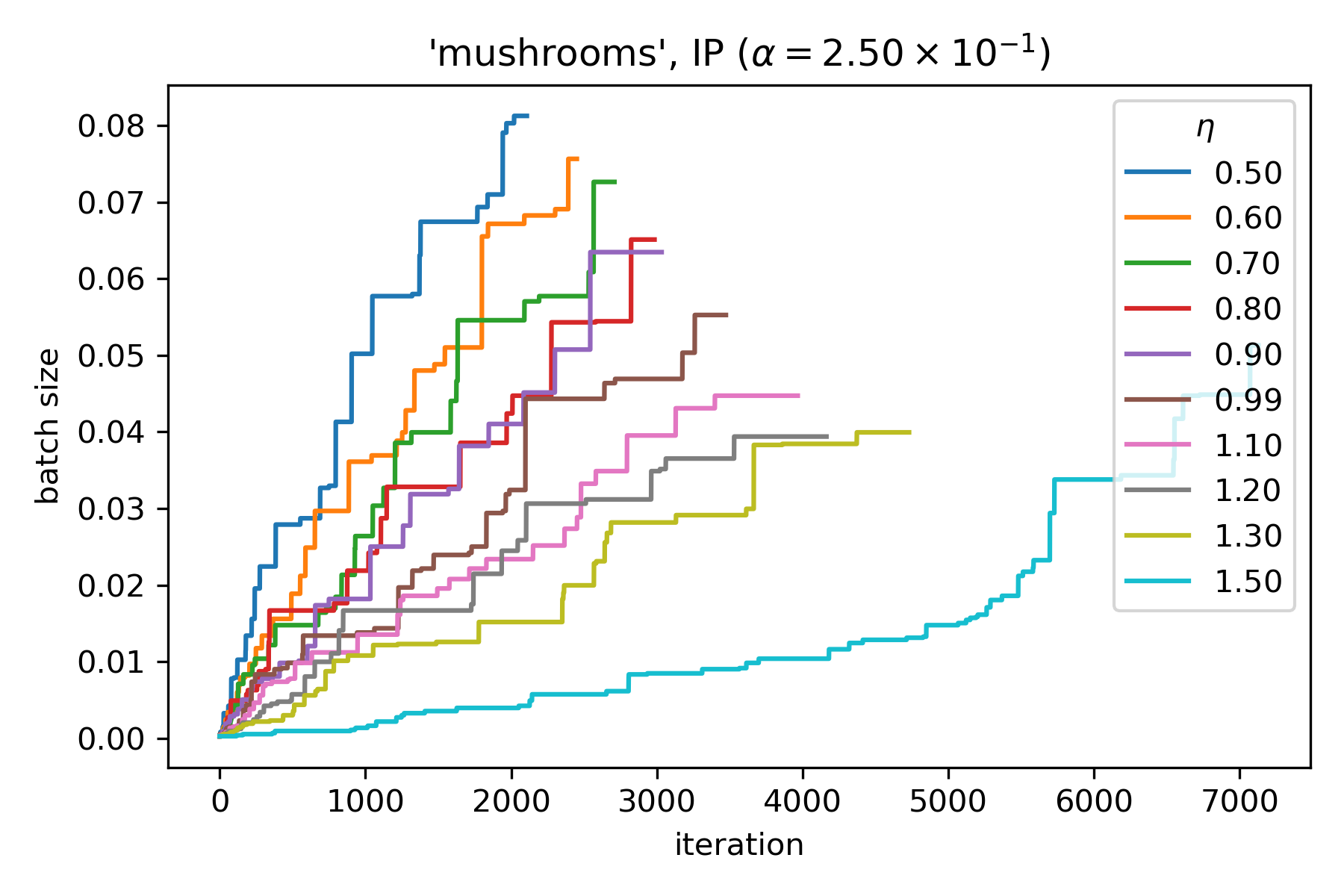}
    \includegraphics[width=0.45\linewidth]{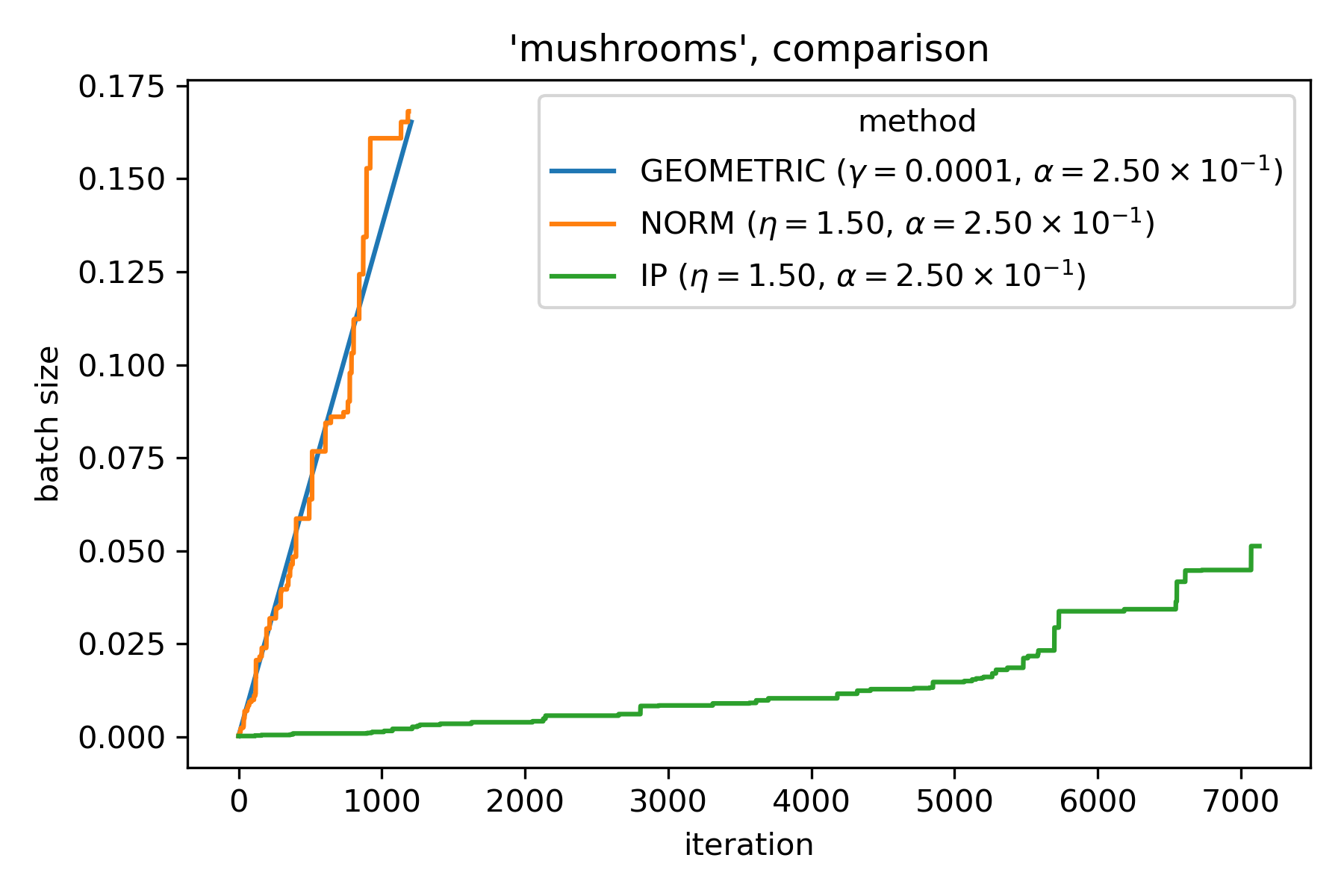} \\
    \caption{Batch size (as a fraction of total number of data points $N$) against iterations on dataset \texttt{mushrooms}, with different strategies to control batch size: geometric increase (top left), norm test (top right), inner-product test (bottom left), and comparison between the best run for each method (bottom right).
    }
    \label{fig:mushroom_batchsizes}
\end{figure}

\section{Final Remarks}
Algorithms that adaptively improve the quality of the approximate gradient during the optimization process are of interest from theoretical and practical perspectives, and have been well-studied in the context of unconstrained optimization.  
In this paper, we proposed an adaptive method for solving constrained and composite optimization problems. The cornerstone of the proposed algorithm and its analysis is condition \eqref{eq_test}, which we regard as a  natural generalization of the well-known norm test from unconstrained optimization. As this condition is difficult to implement precisely in practice, we approximate it by  condition \eqref{ptest}. We are able to prove convergence for the resulting methods under standard conditions. It remains to be seen whether there is a condition with similar properties as \eqref{eq_test}, that is  amenable to computation and less restrictive than \eqref{ptest}. In this paper, we also proposed a practical  inner-product condition \eqref{ip} that extends the ideas proposed in the unconstrained settings, and is more efficient in practice than the norm condition.}
\newpage
\bibliographystyle{plain}
\bibliography{cosample_Arxiv}
\appendix
\newpage
\section{Additional Numerical Experiments}\label{addnumerical}
Here we present the numerical experiments for remaining data sets.

\begin{figure}[htp]
    \centering
    \includegraphics[width=0.45\linewidth]{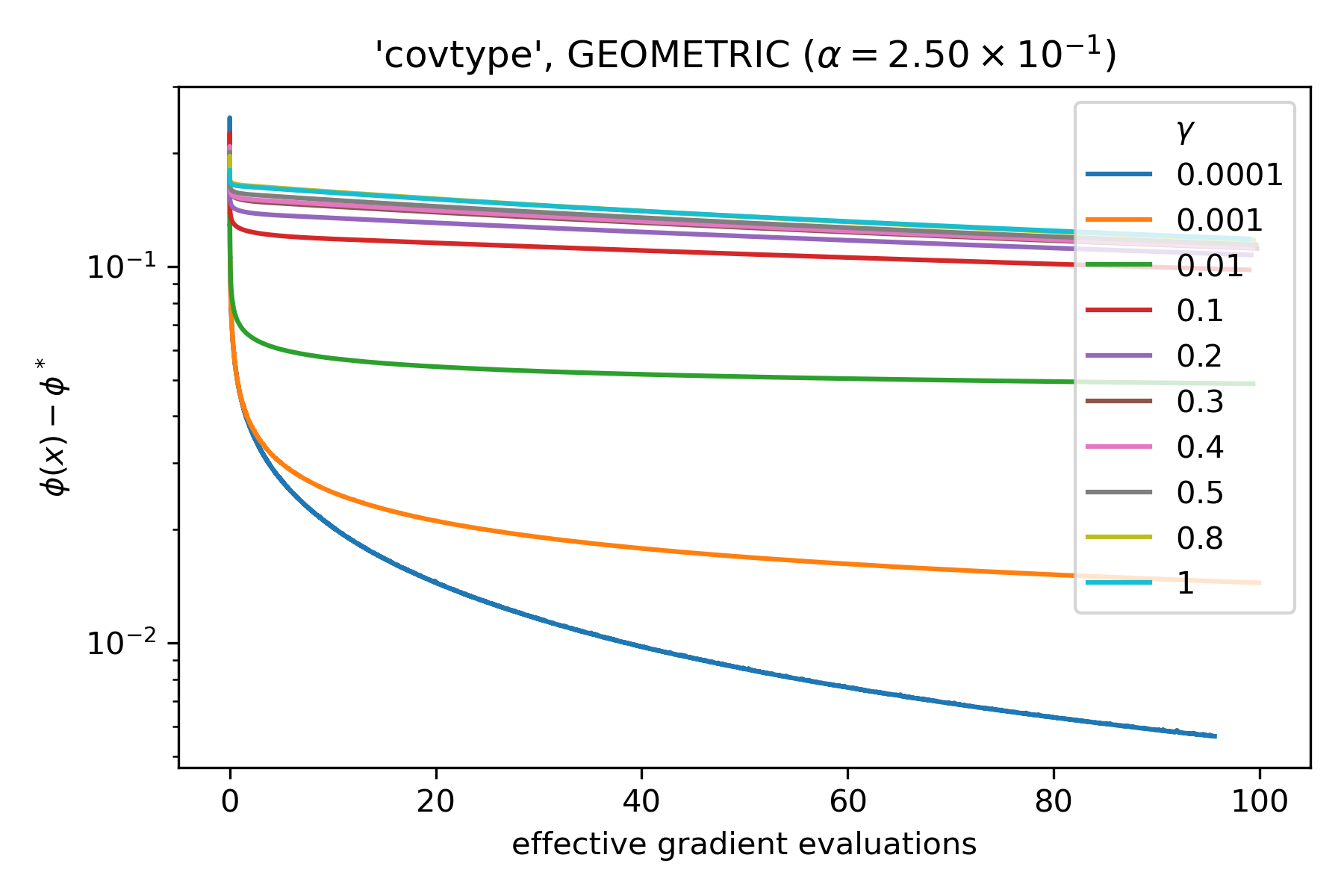}
	\includegraphics[width=0.45\linewidth]{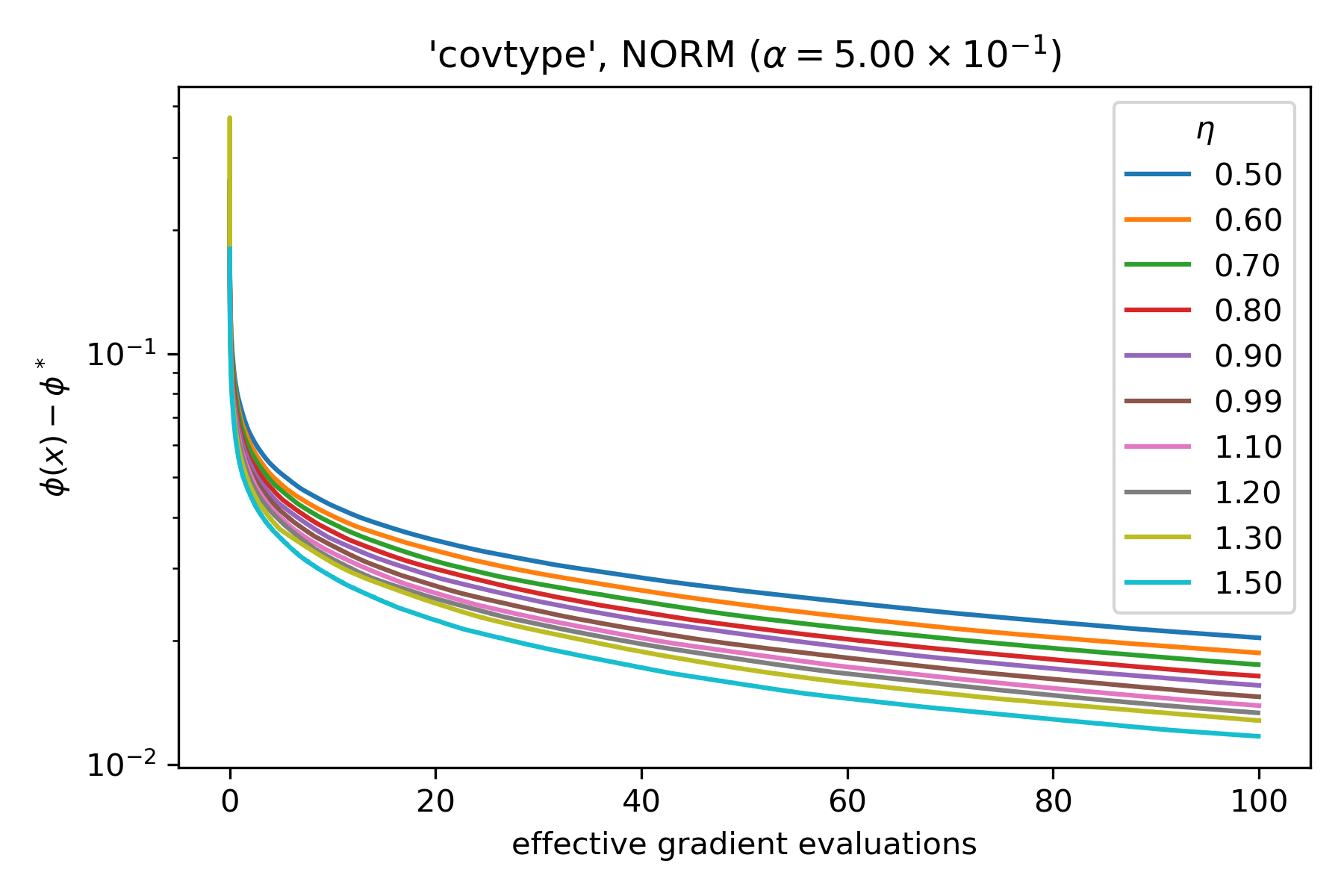} \\
	\includegraphics[width=0.45\linewidth]{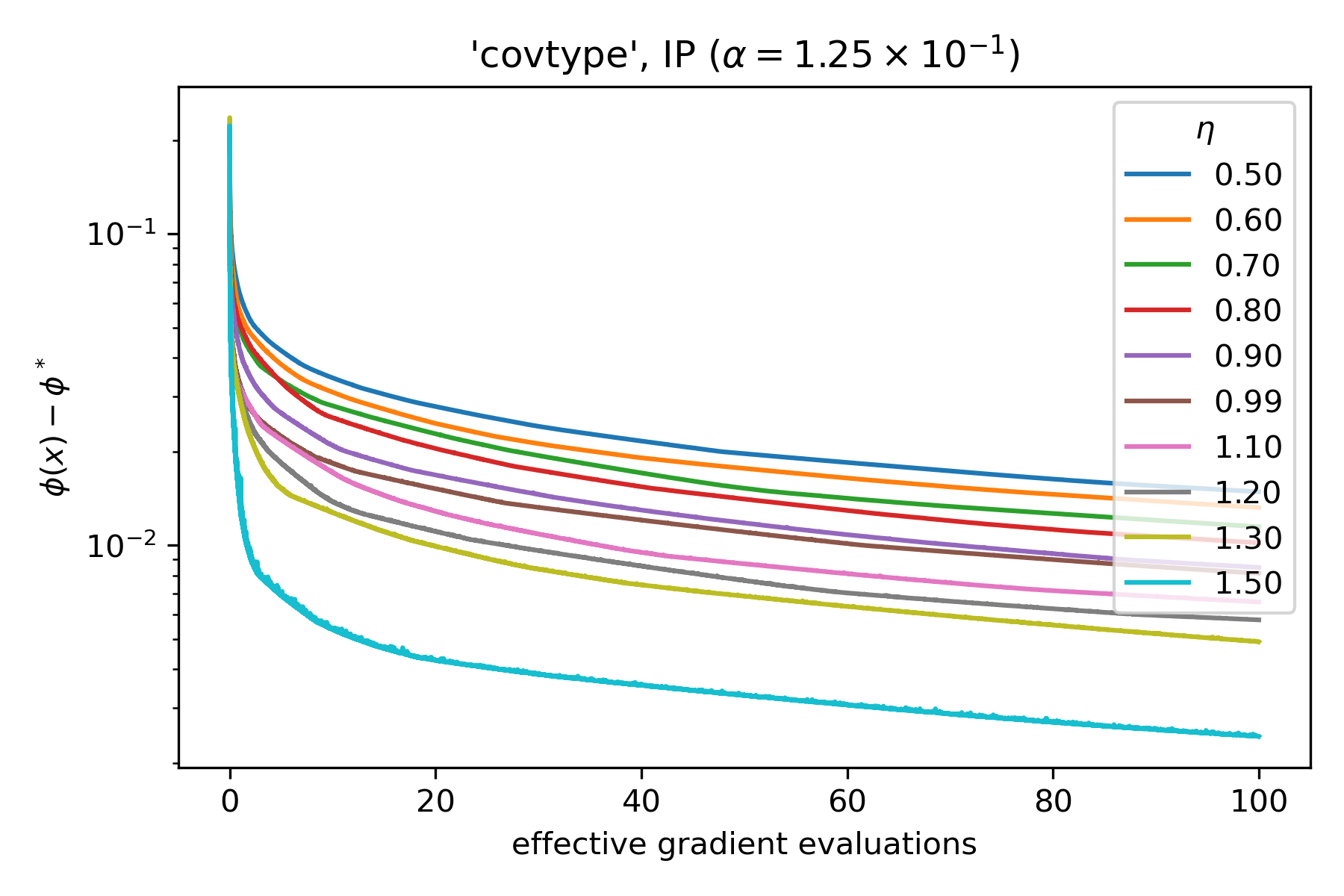}
    \includegraphics[width=0.45\linewidth]{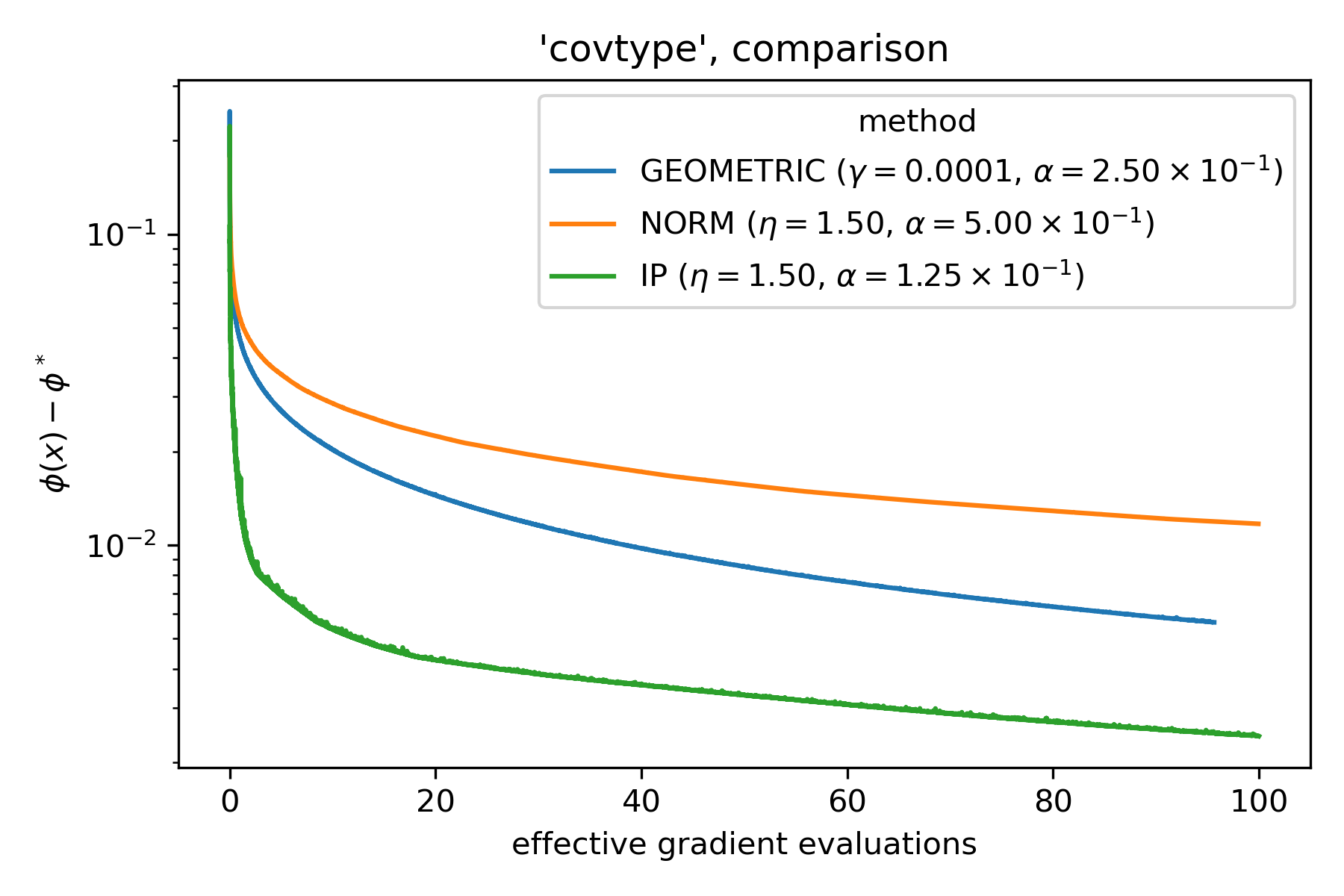} \\
    \caption{Optimality gap $\phi(x_k) - \phi^*$ against effective gradient evaluations on dataset \texttt{covtype}, with different strategies to control batch size: geometric increase (top left), norm test (top right), inner-product test (bottom left), and comparison between the best run for each method (bottom right).
    }
    \label{fig:covtype_func}
\end{figure}

\begin{figure}[htp]
    \centering
    \includegraphics[width=0.45\linewidth]{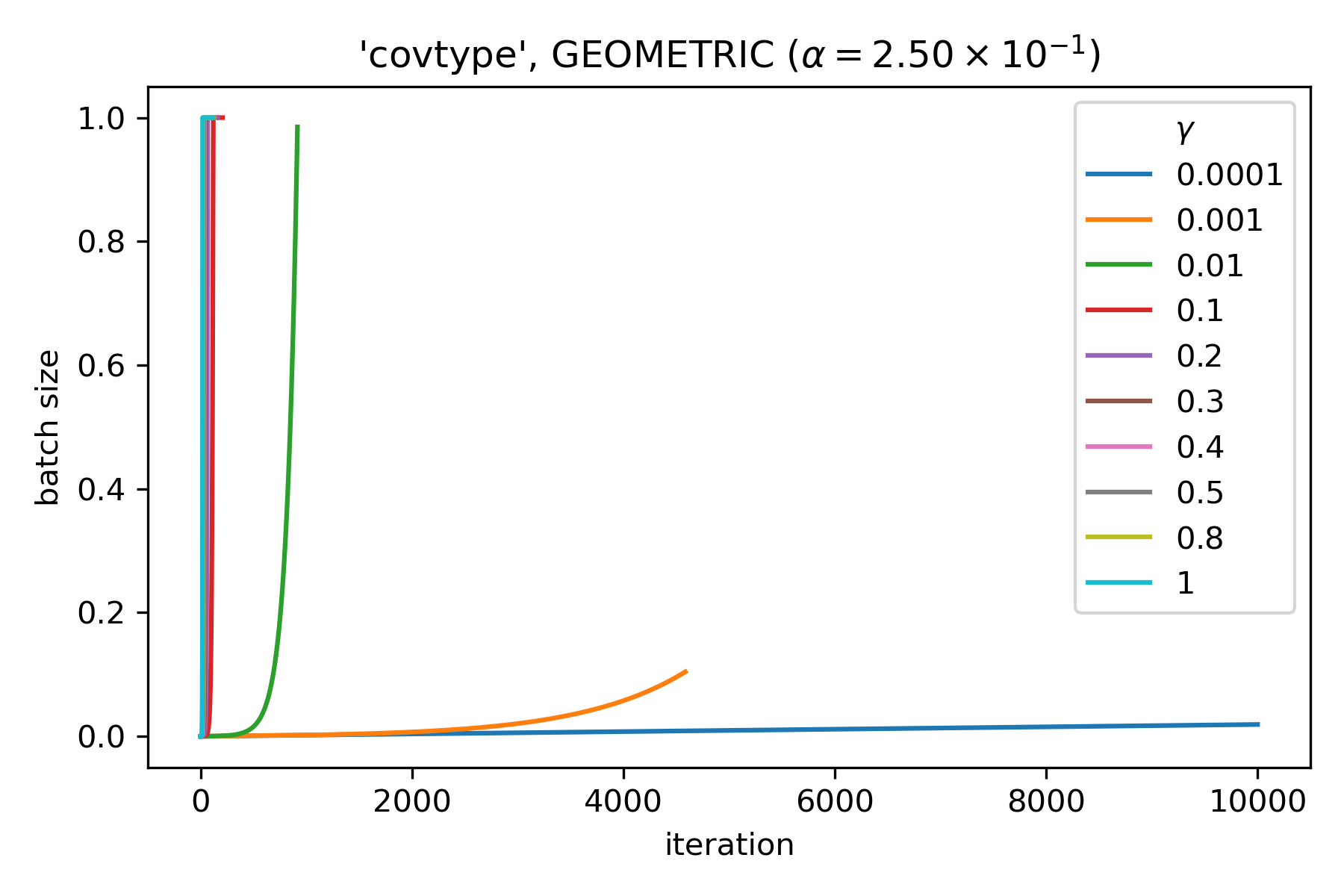}
	\includegraphics[width=0.45\linewidth]{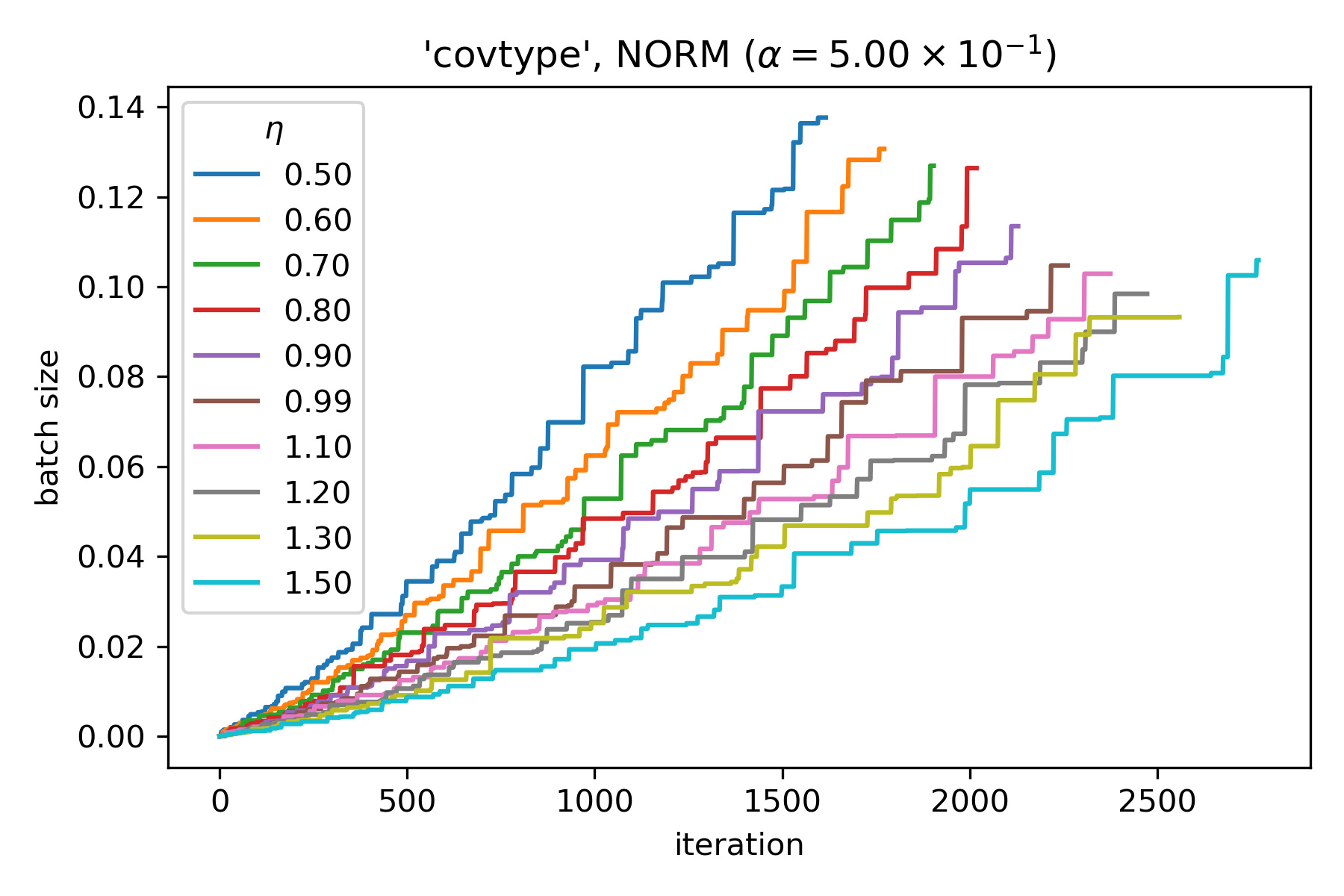} \\
	\includegraphics[width=0.45\linewidth]{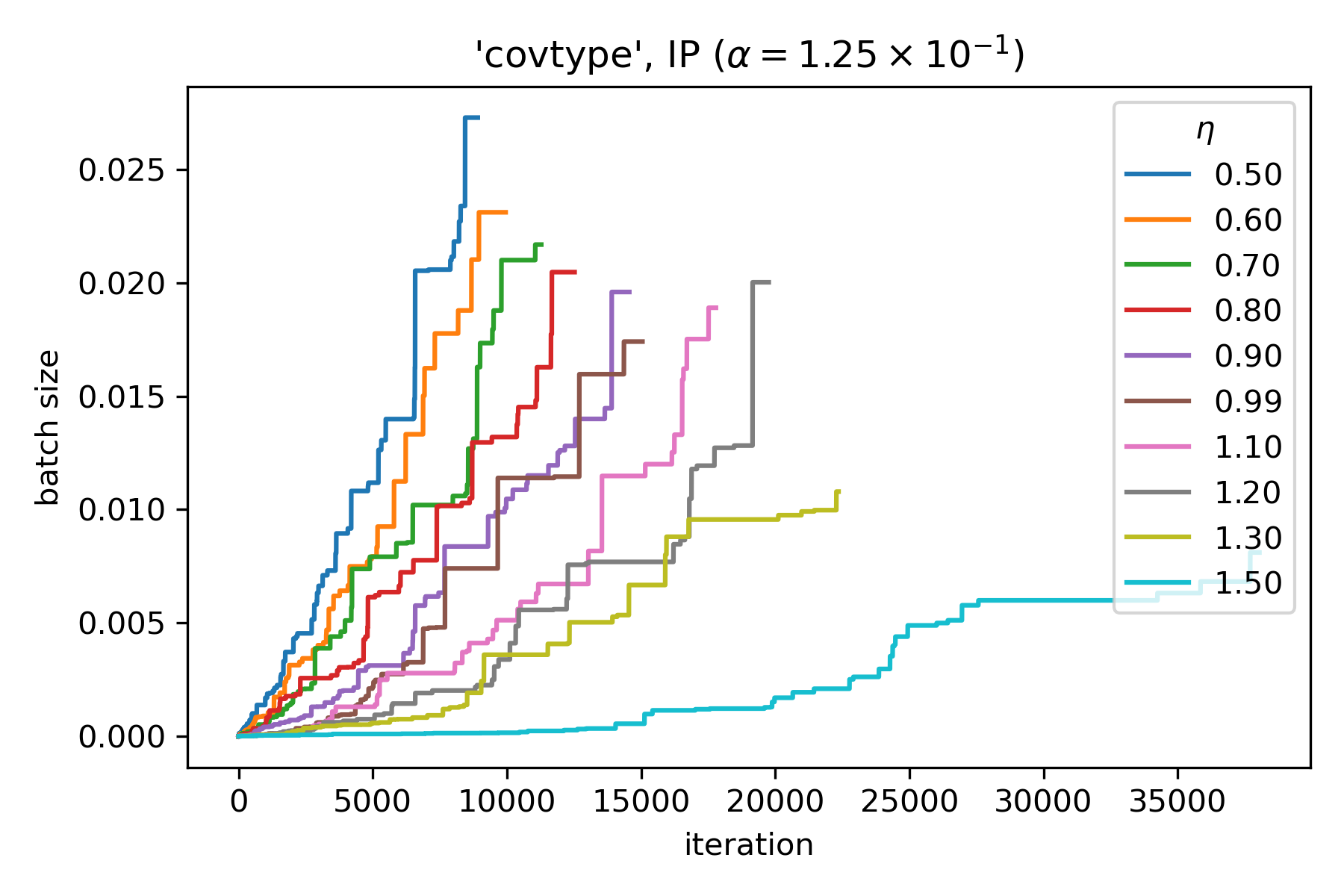}
    \includegraphics[width=0.45\linewidth]{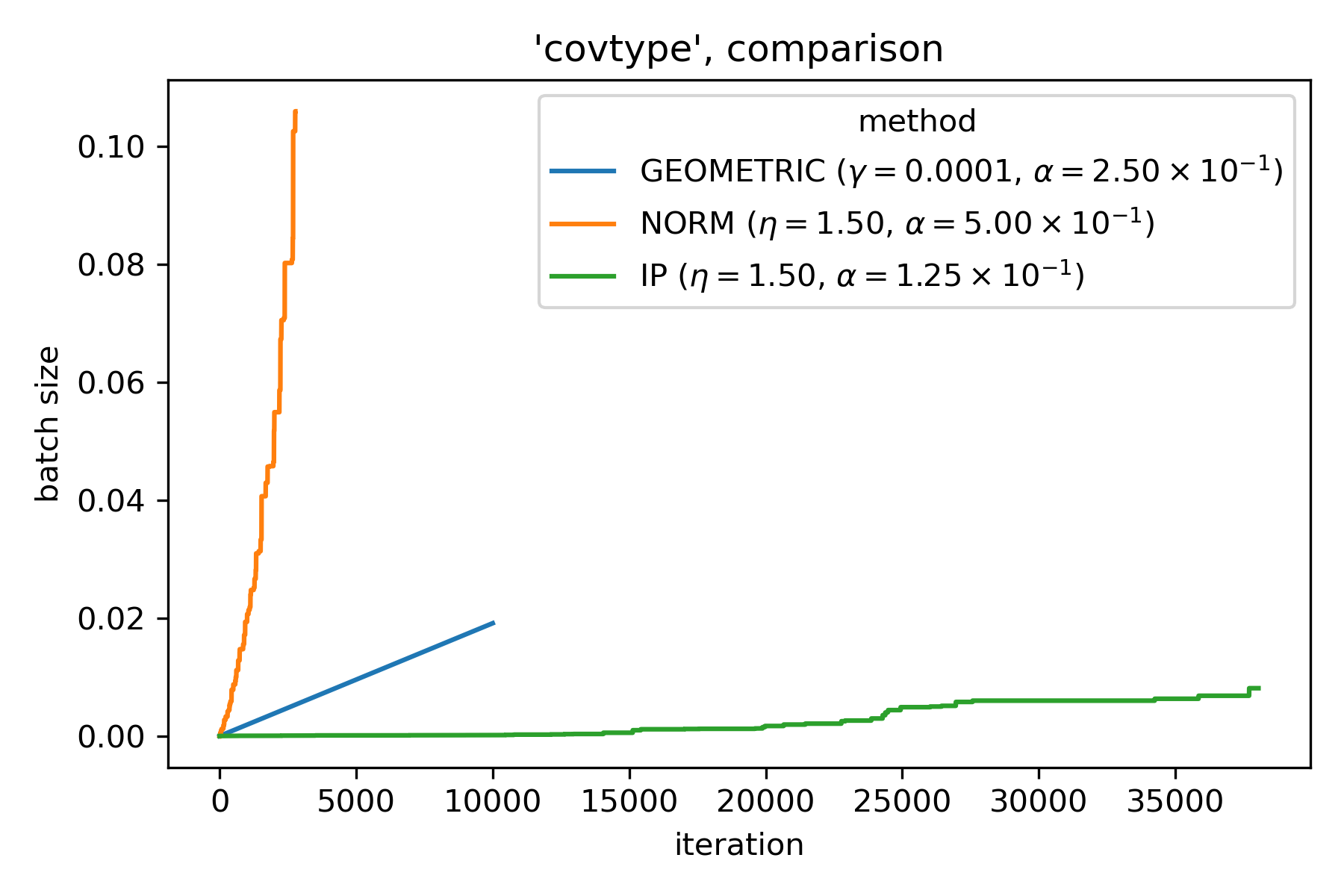} \\
    \caption{Batch size (as a fraction of total number of data points $N$) against iterations on dataset \texttt{covtype}, with different strategies to control batch size: geometric increase (top left), norm test (top right), inner-product test (bottom left), and comparison between the best run for each method (bottom right).
    }
    \label{fig:covtype_batchsizes}
\end{figure}

\begin{figure}[htp]
    \centering
    \includegraphics[width=0.45\linewidth]{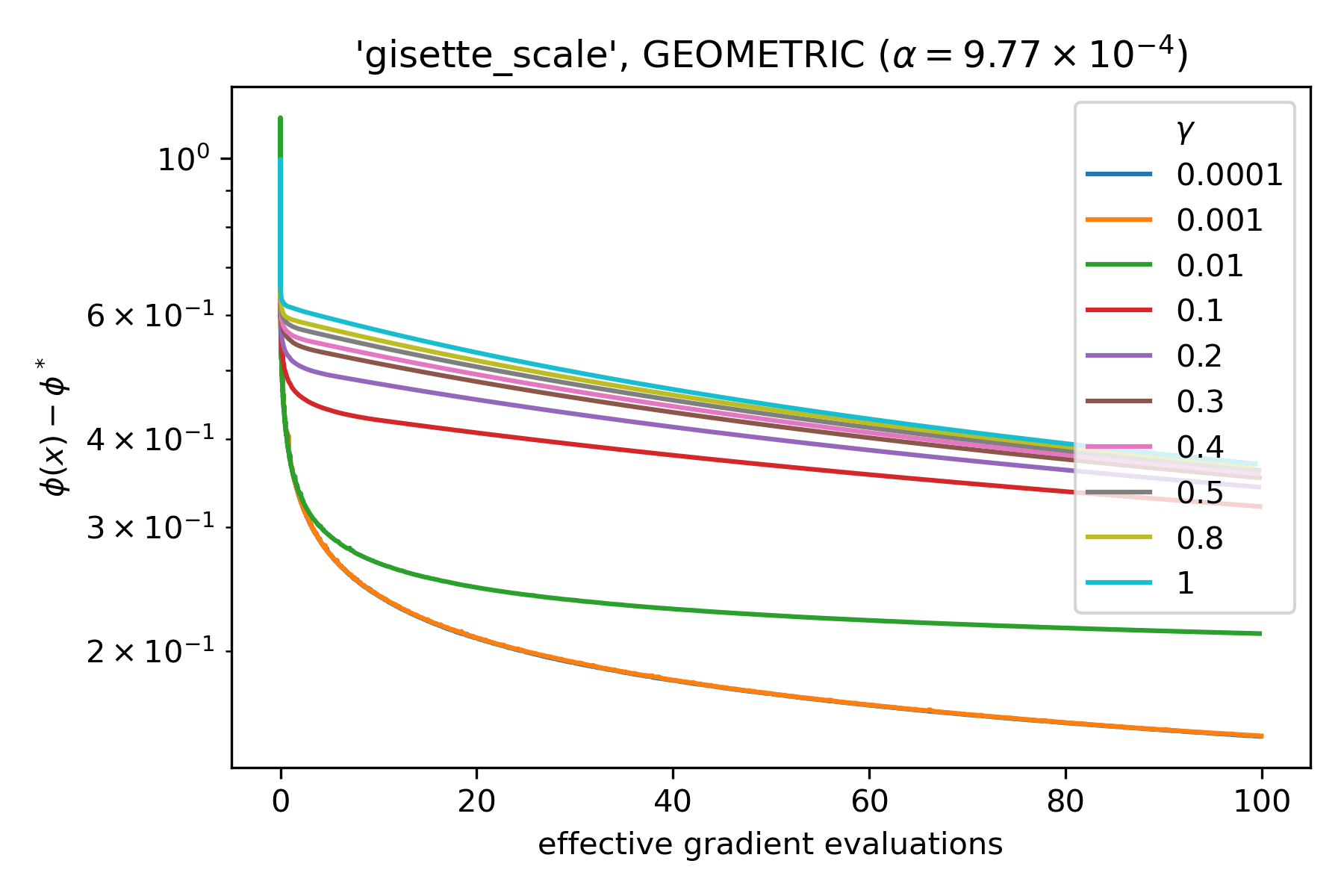}
	\includegraphics[width=0.45\linewidth]{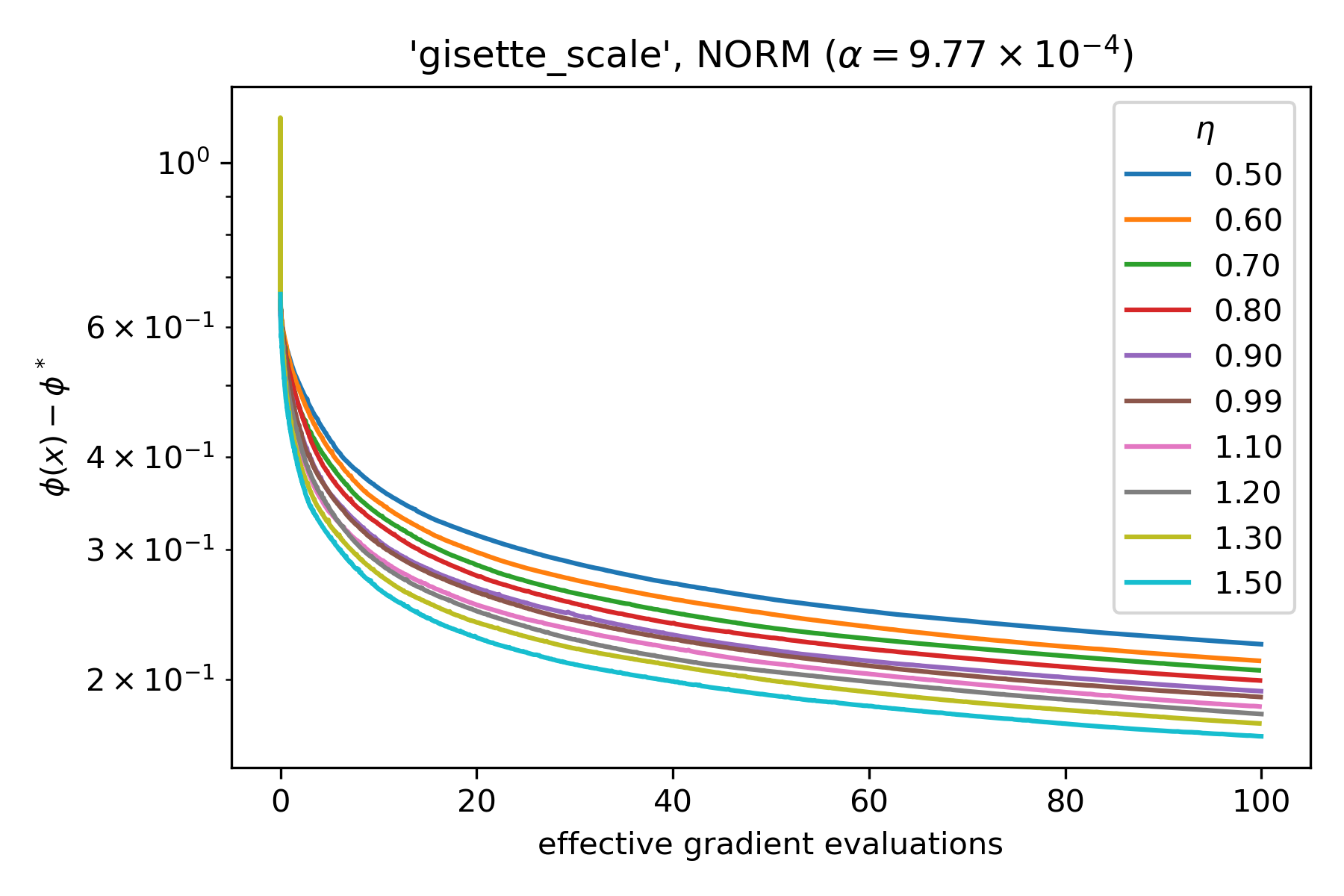} \\
	\includegraphics[width=0.45\linewidth]{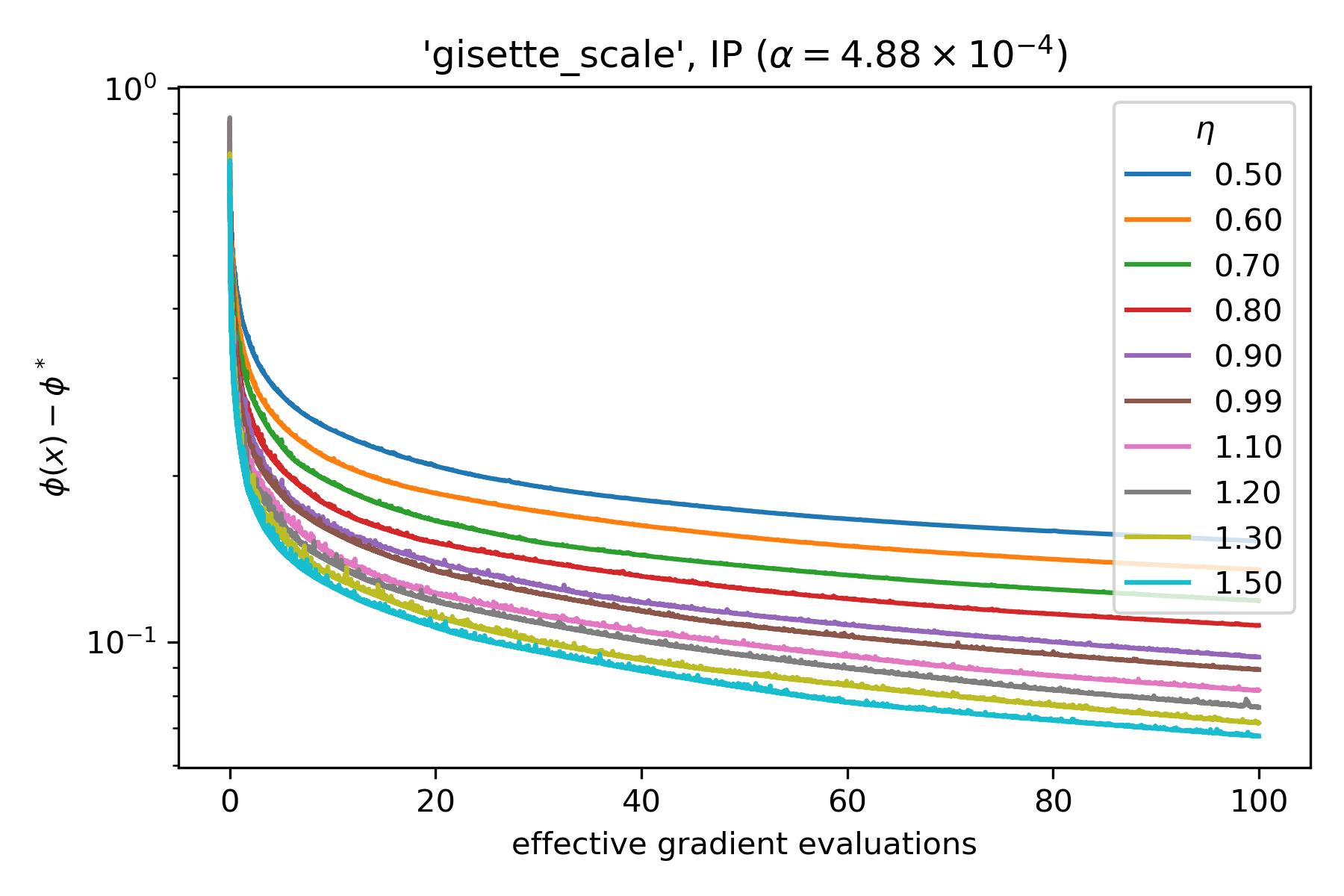}
    \includegraphics[width=0.45\linewidth]{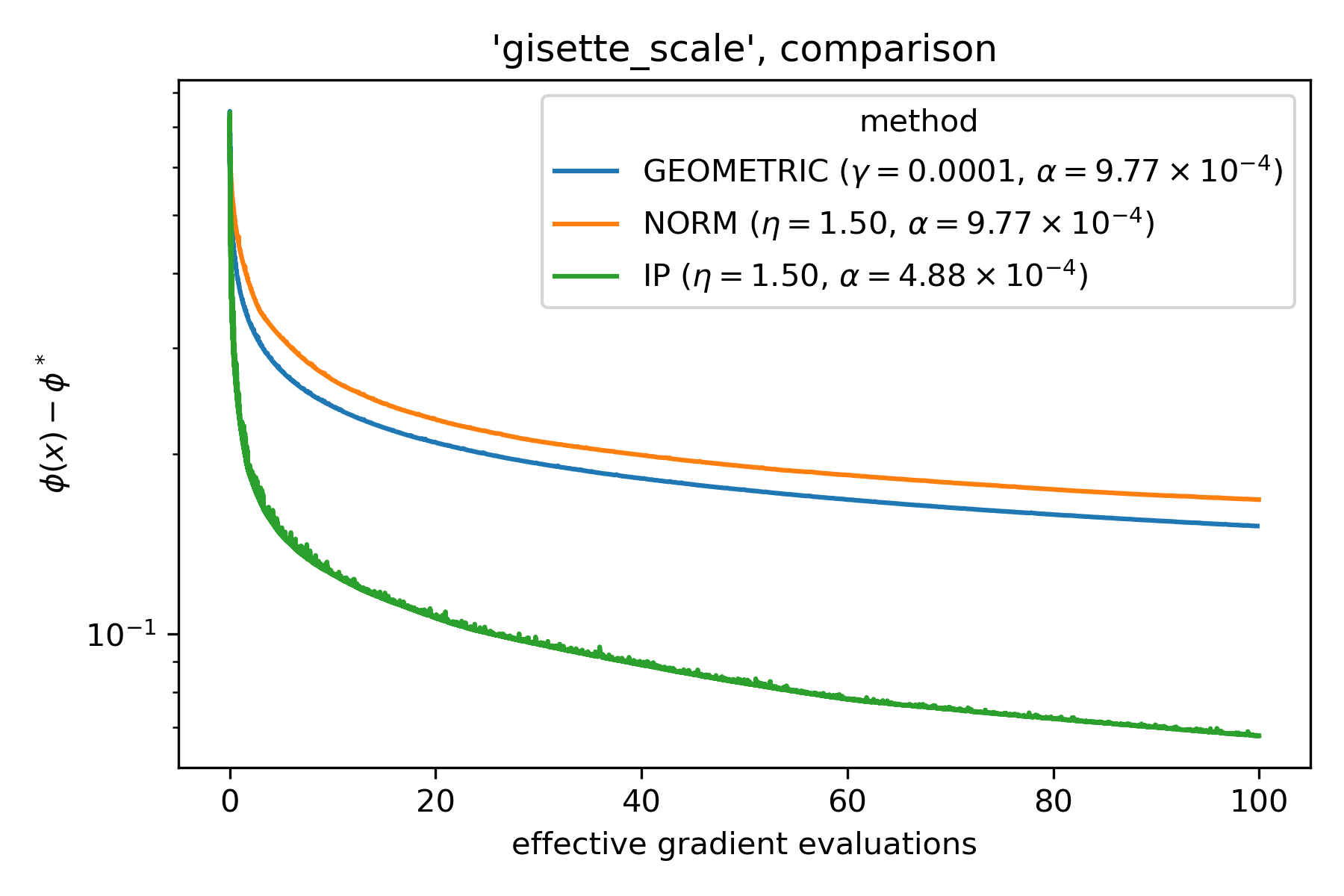} \\
    \caption{Optimality gap $\phi(x_k) - \phi^*$ against effective gradient evaluations on dataset \texttt{gisette\_scale}, with different strategies to control batch size: geometric increase (top left), norm test (top right), inner-product test (bottom left), and comparison between the best run for each method (bottom right).
    }
    \label{fig:gisette_scale_func}
\end{figure}

\begin{figure}[htp]
    \centering
    \includegraphics[width=0.45\linewidth]{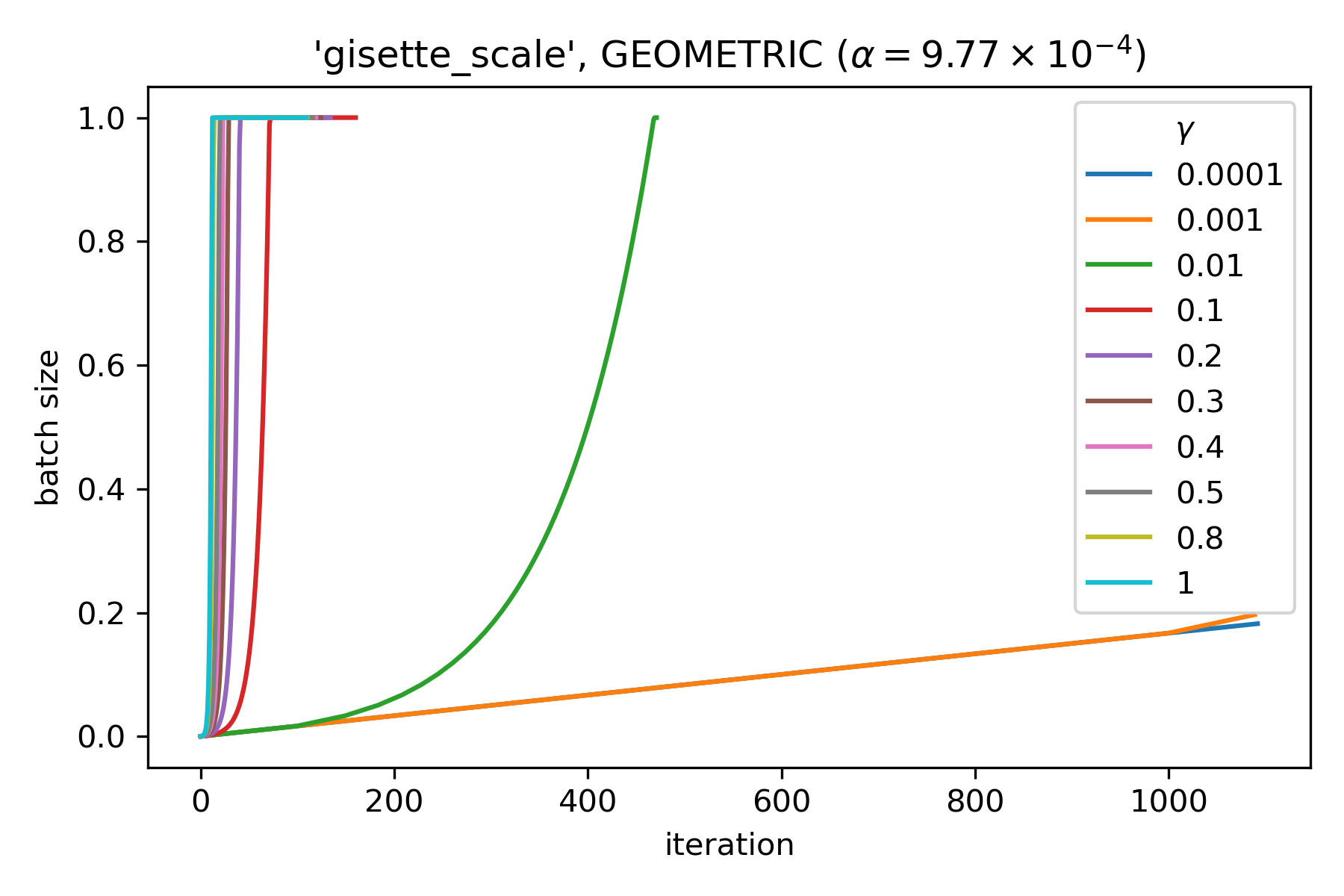}
	\includegraphics[width=0.45\linewidth]{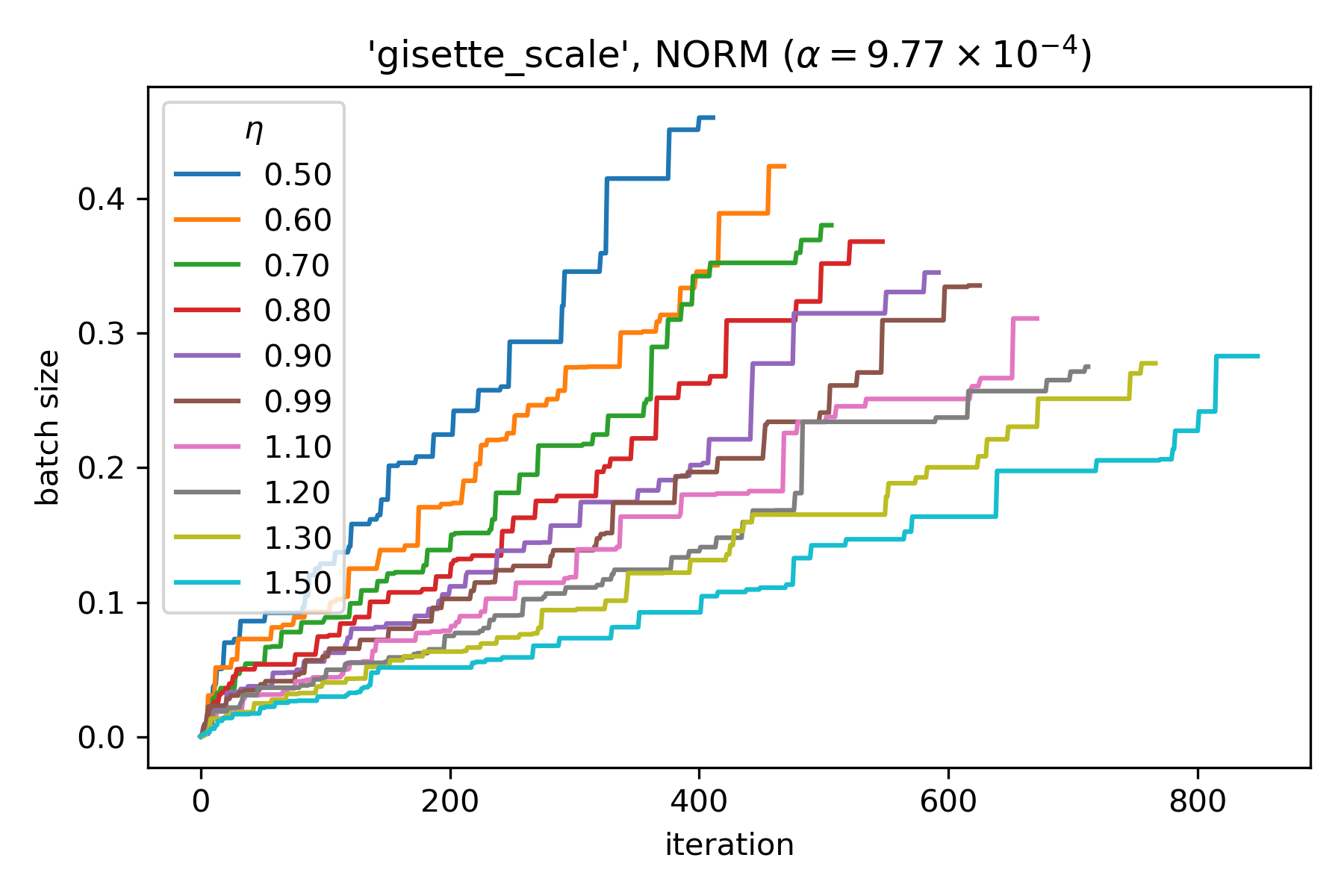} \\
	\includegraphics[width=0.45\linewidth]{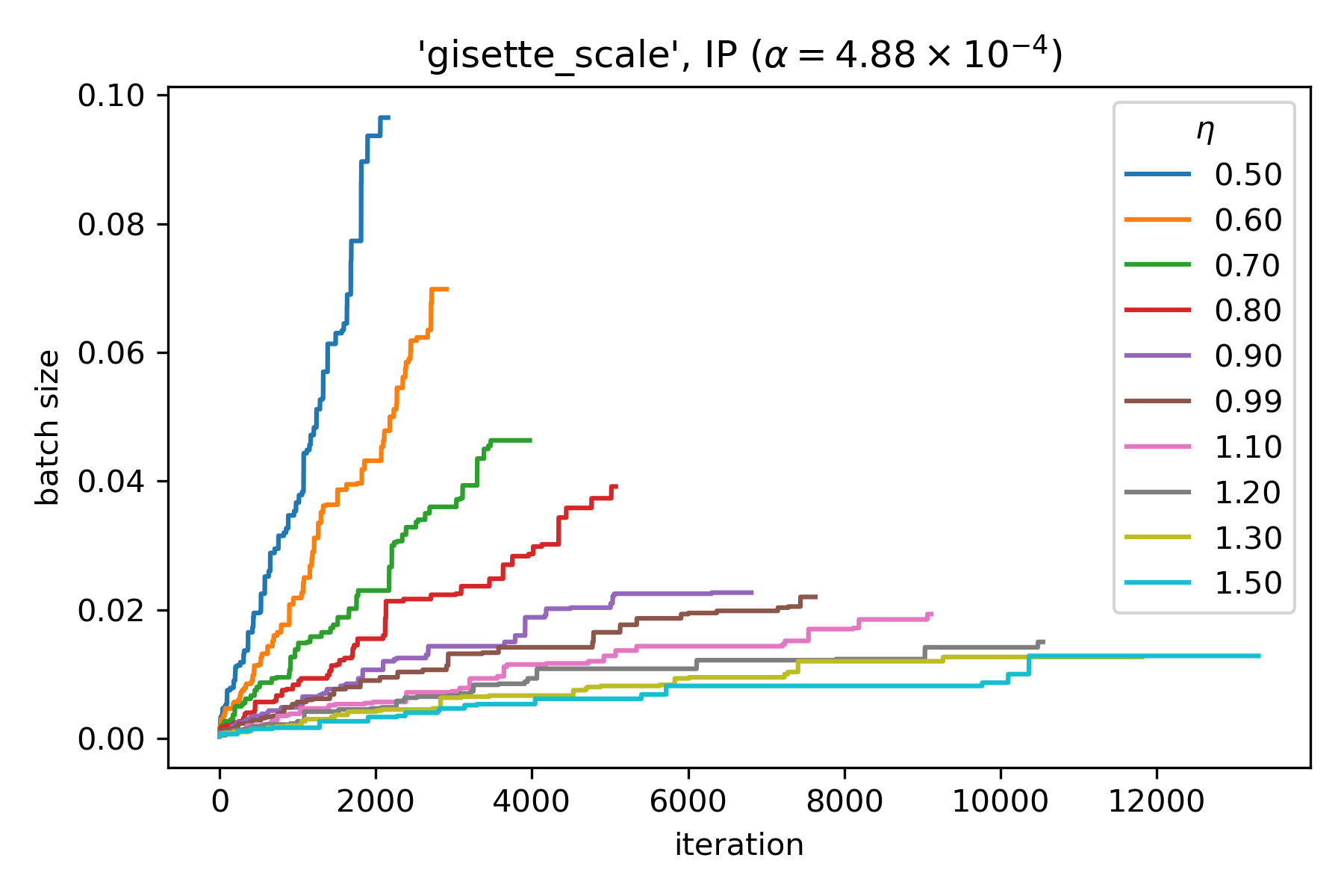}
    \includegraphics[width=0.45\linewidth]{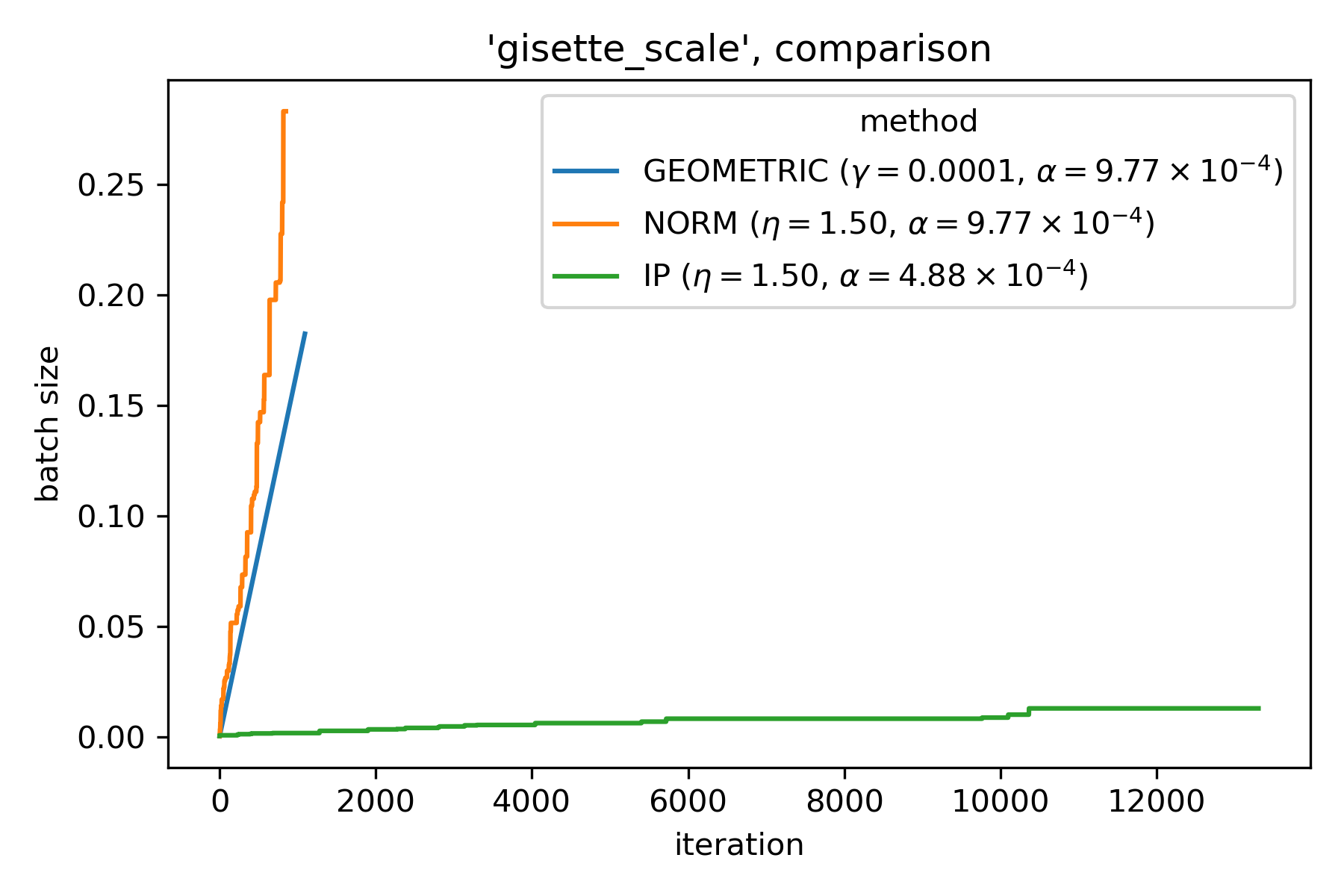} \\
    \caption{Batch size (as a fraction of total number of data points $N$) against iterations on dataset \texttt{gisette\_scale}, with different strategies to control batch size: geometric increase (top left), norm test (top right), inner-product test (bottom left), and comparison between the best run for each method (bottom right).
    }
    \label{fig:gisette_scale_batchsizes}
\end{figure}

\begin{figure}[htp]
    \centering
    \includegraphics[width=0.45\linewidth]{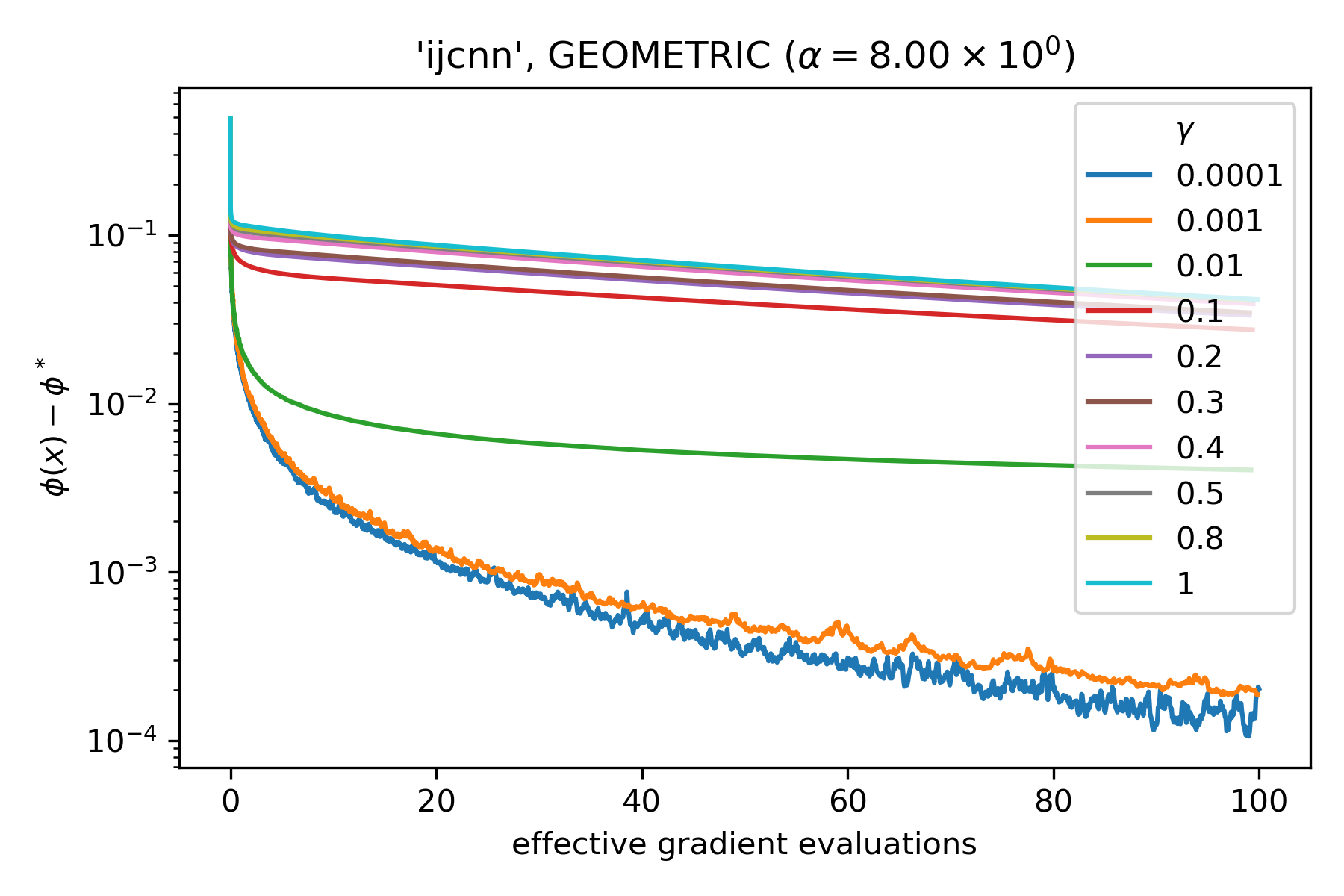}
	\includegraphics[width=0.45\linewidth]{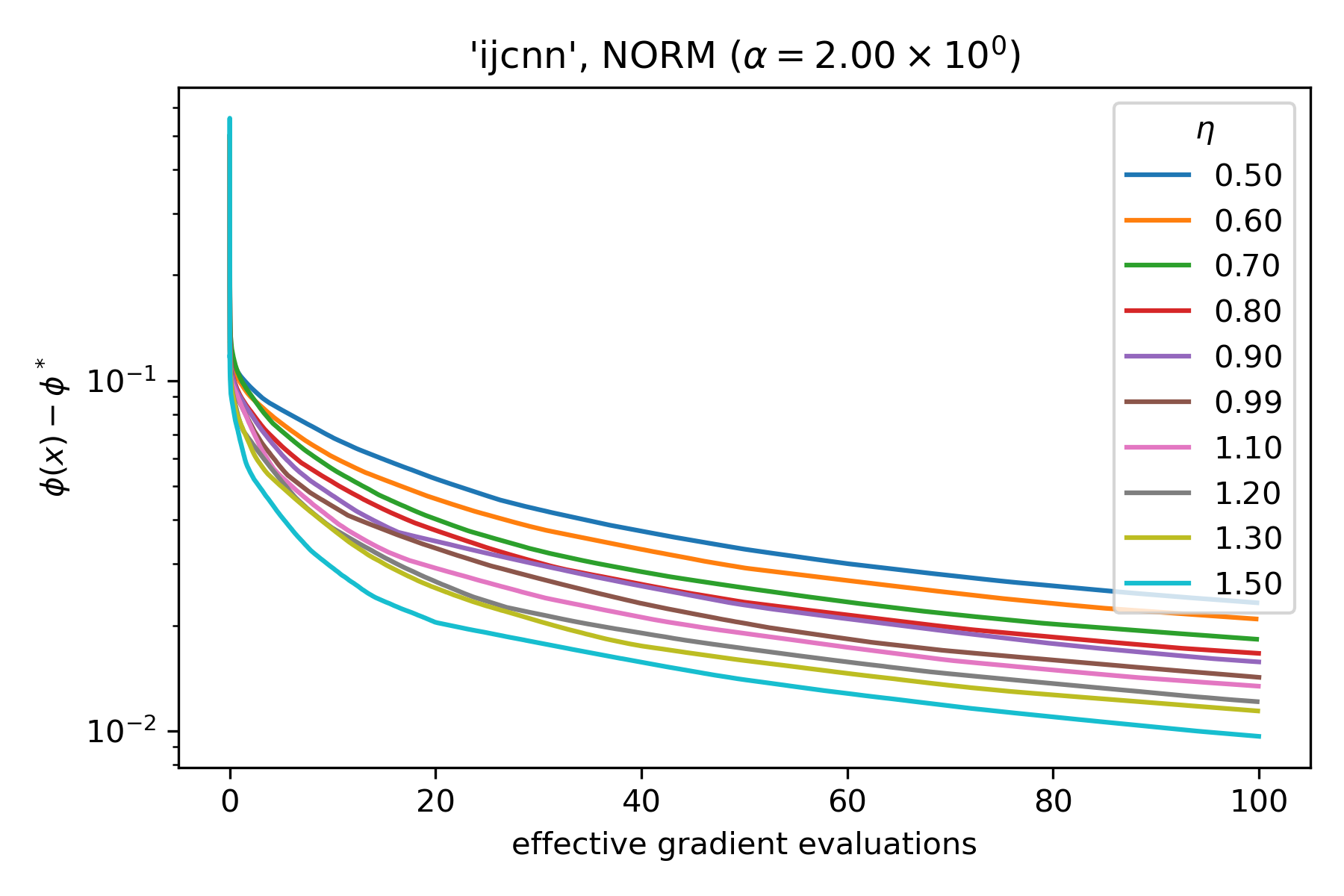} \\
	\includegraphics[width=0.45\linewidth]{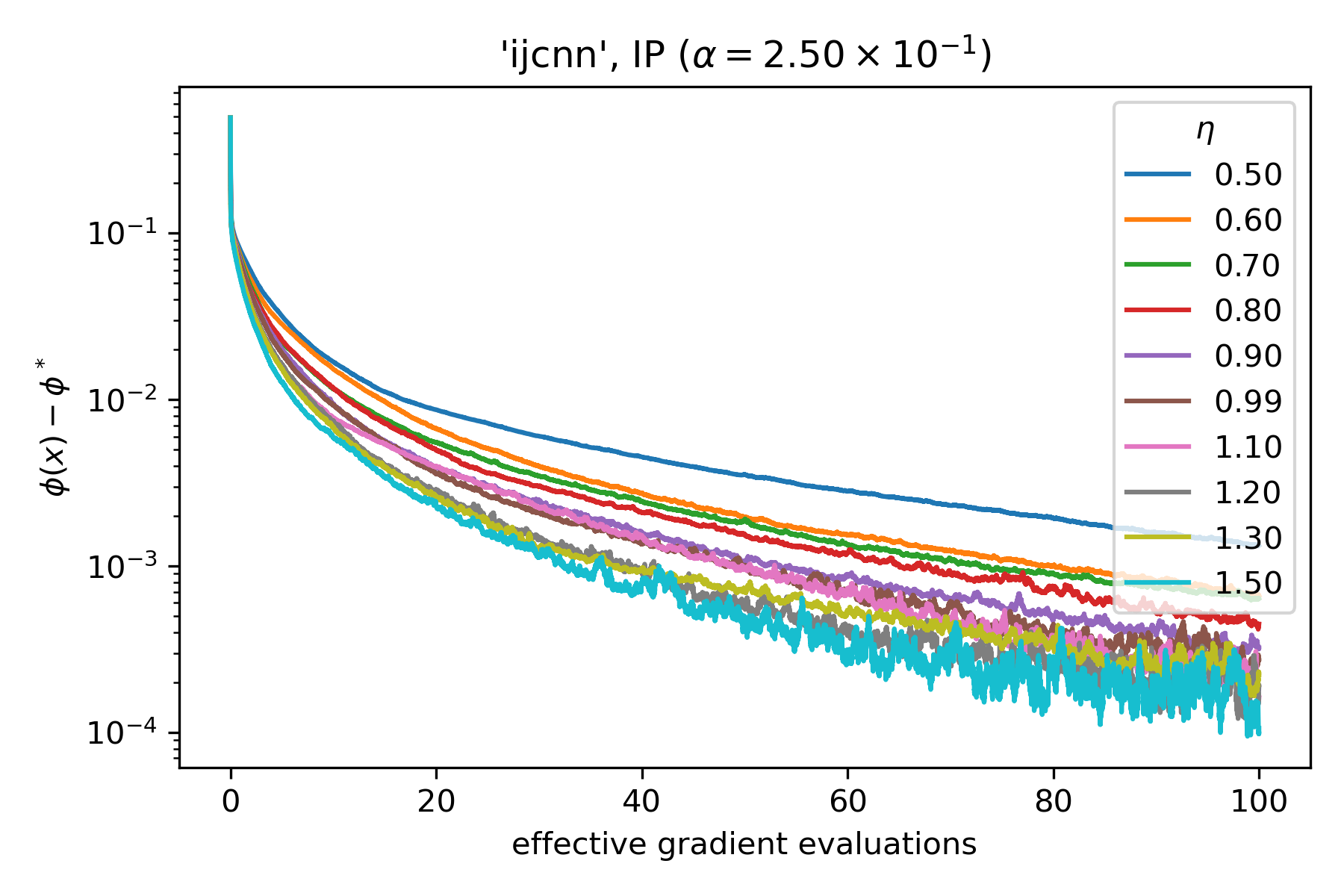}
    \includegraphics[width=0.45\linewidth]{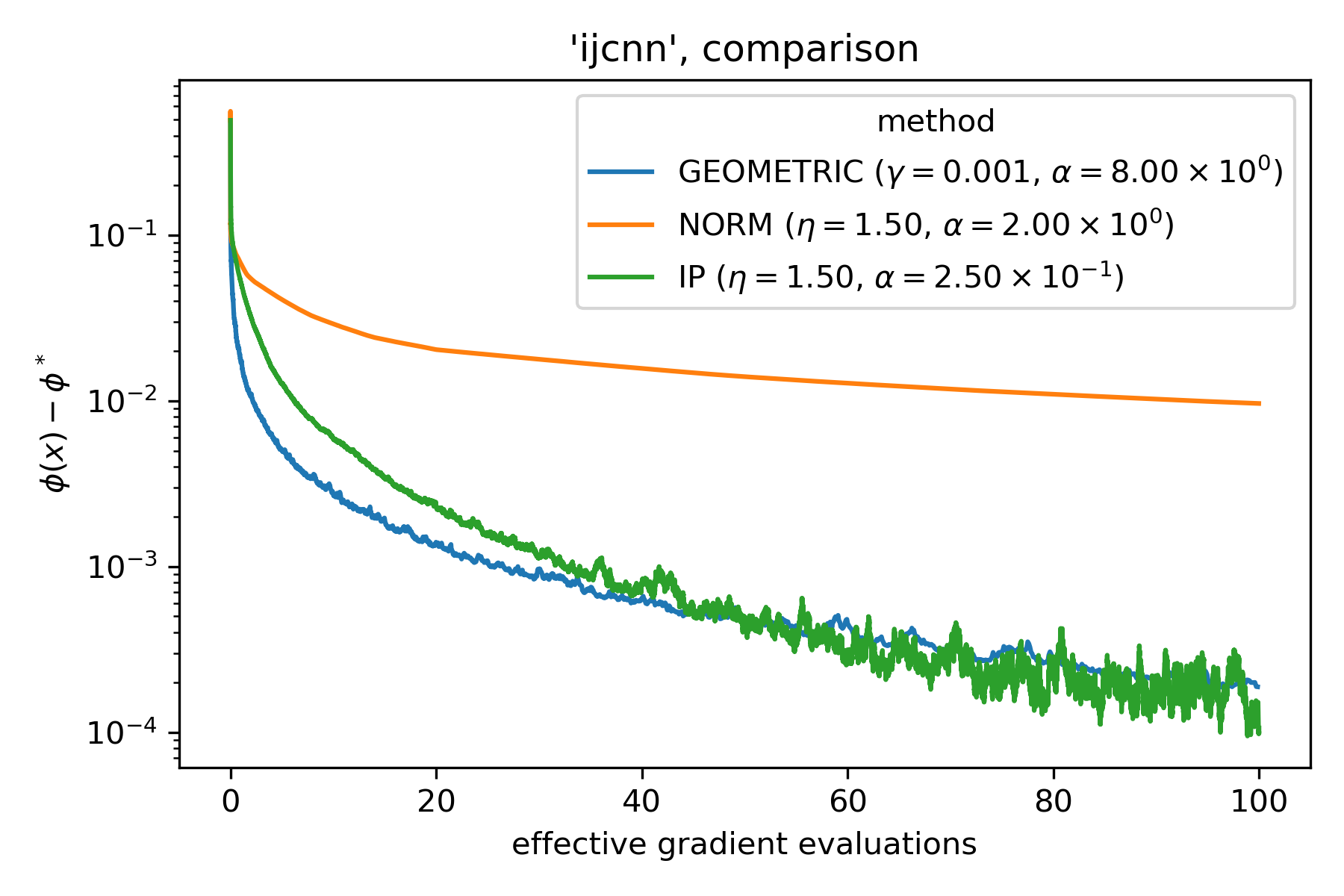} \\
    \caption{Optimality gap $\phi(x_k) - \phi^*$ against effective gradient evaluations on dataset \texttt{ijcnn}, with different strategies to control batch size: geometric increase (top left), norm test (top right), inner-product test (bottom left), and comparison between the best run for each method (bottom right).
    }
    \label{fig:ijcnn_func}
\end{figure}

\begin{figure}[htp]
    \centering
    \includegraphics[width=0.45\linewidth]{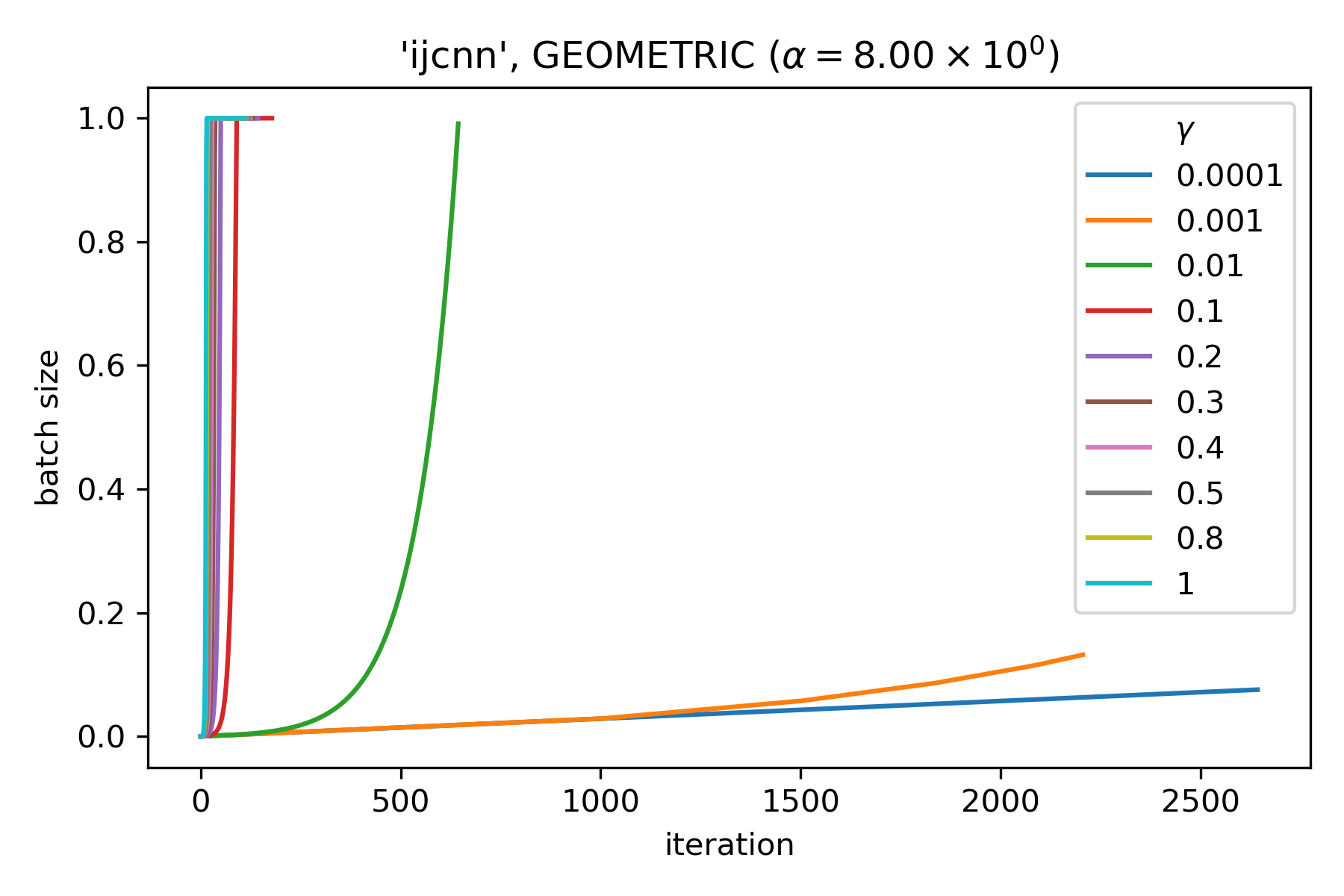}
	\includegraphics[width=0.45\linewidth]{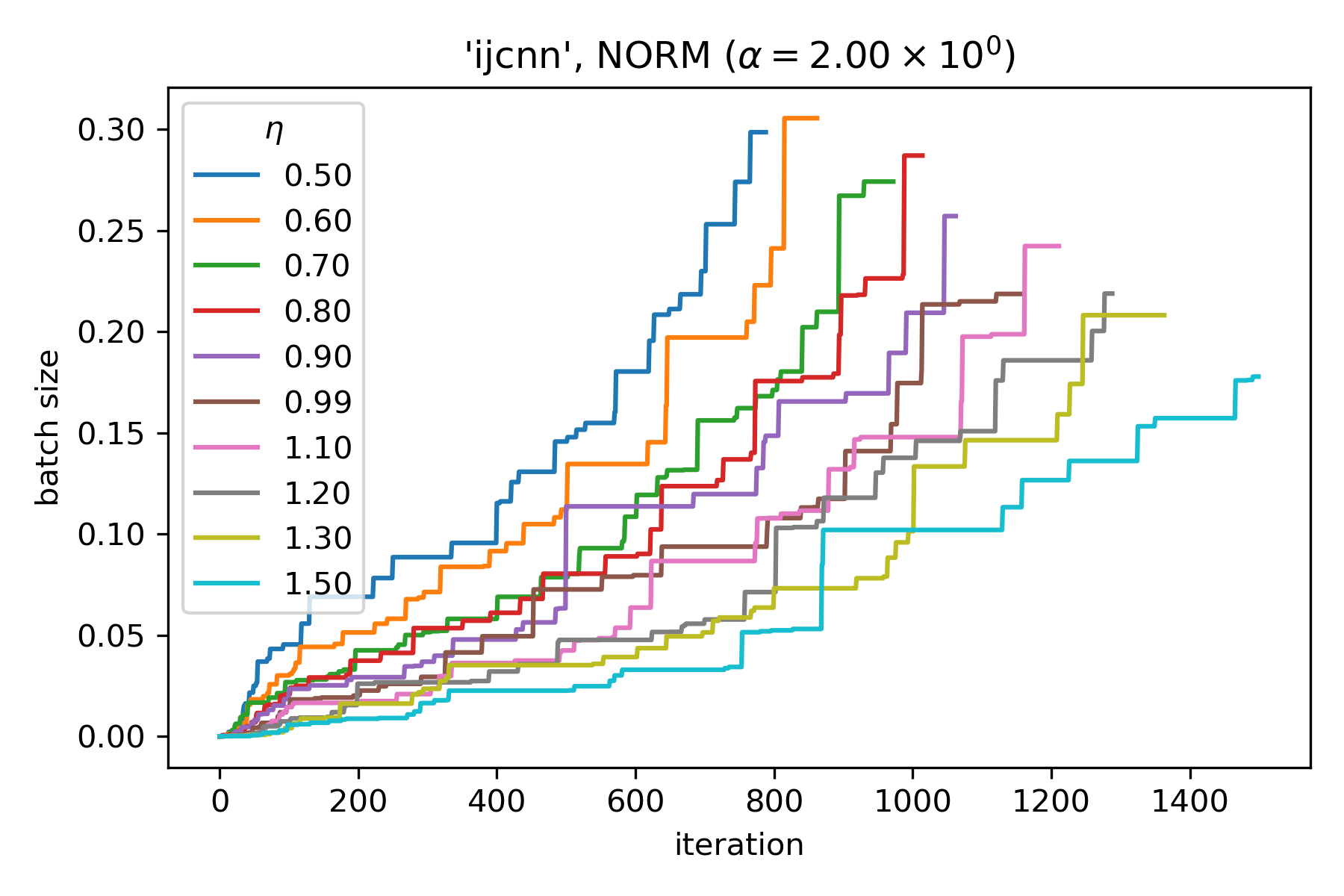} \\
	\includegraphics[width=0.45\linewidth]{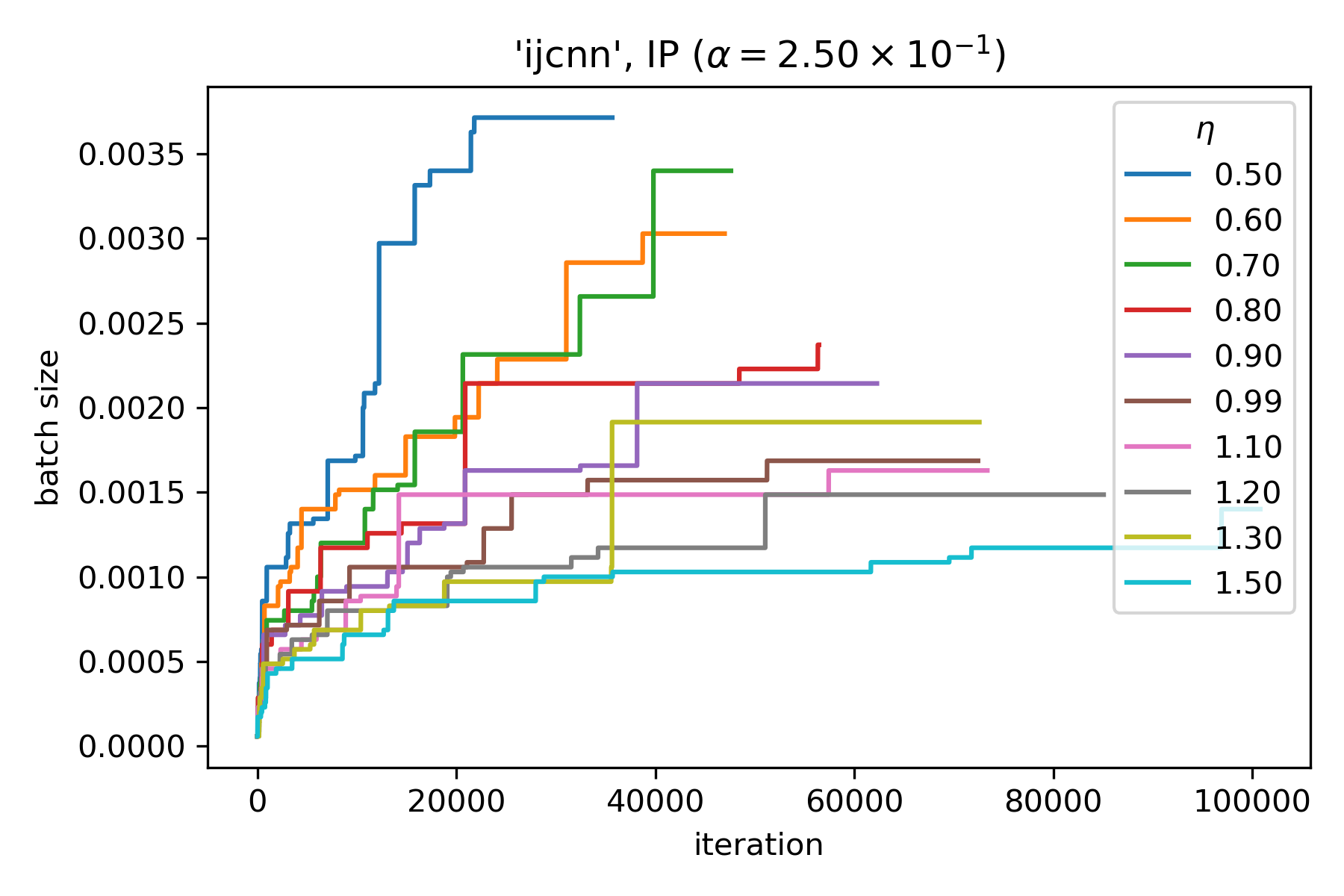}
    \includegraphics[width=0.45\linewidth]{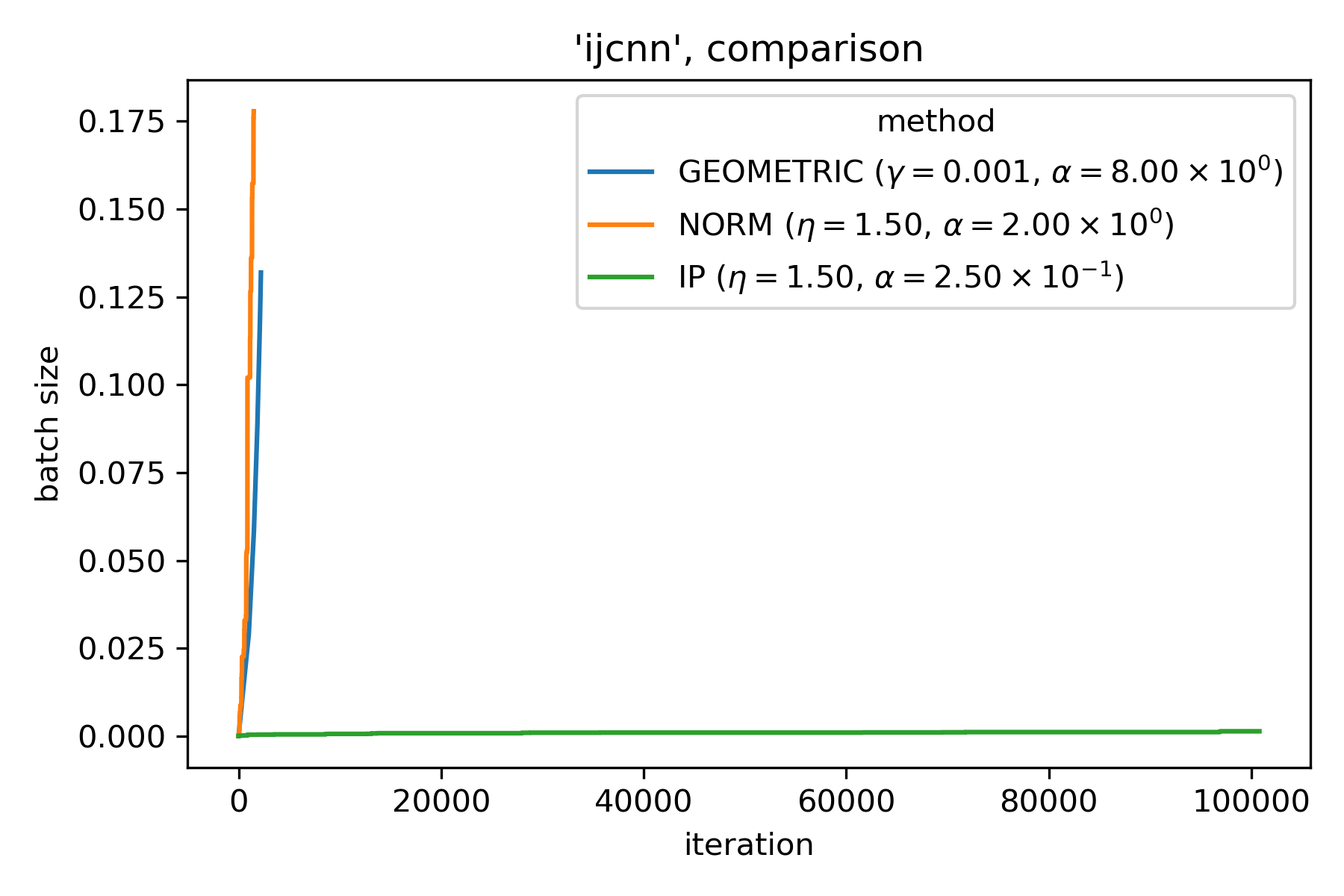} \\
    \caption{Batch size (as a fraction of total number of data points $N$) against iterations on dataset \texttt{ijcnn}, with different strategies to control batch size: geometric increase (top left), norm test (top right), inner-product test (bottom left), and comparison between the best run for each method (bottom right).
    }
    \label{fig:ijcnn_batchsizes}
\end{figure}

\begin{figure}[htp]
    \centering
    \includegraphics[width=0.45\linewidth]{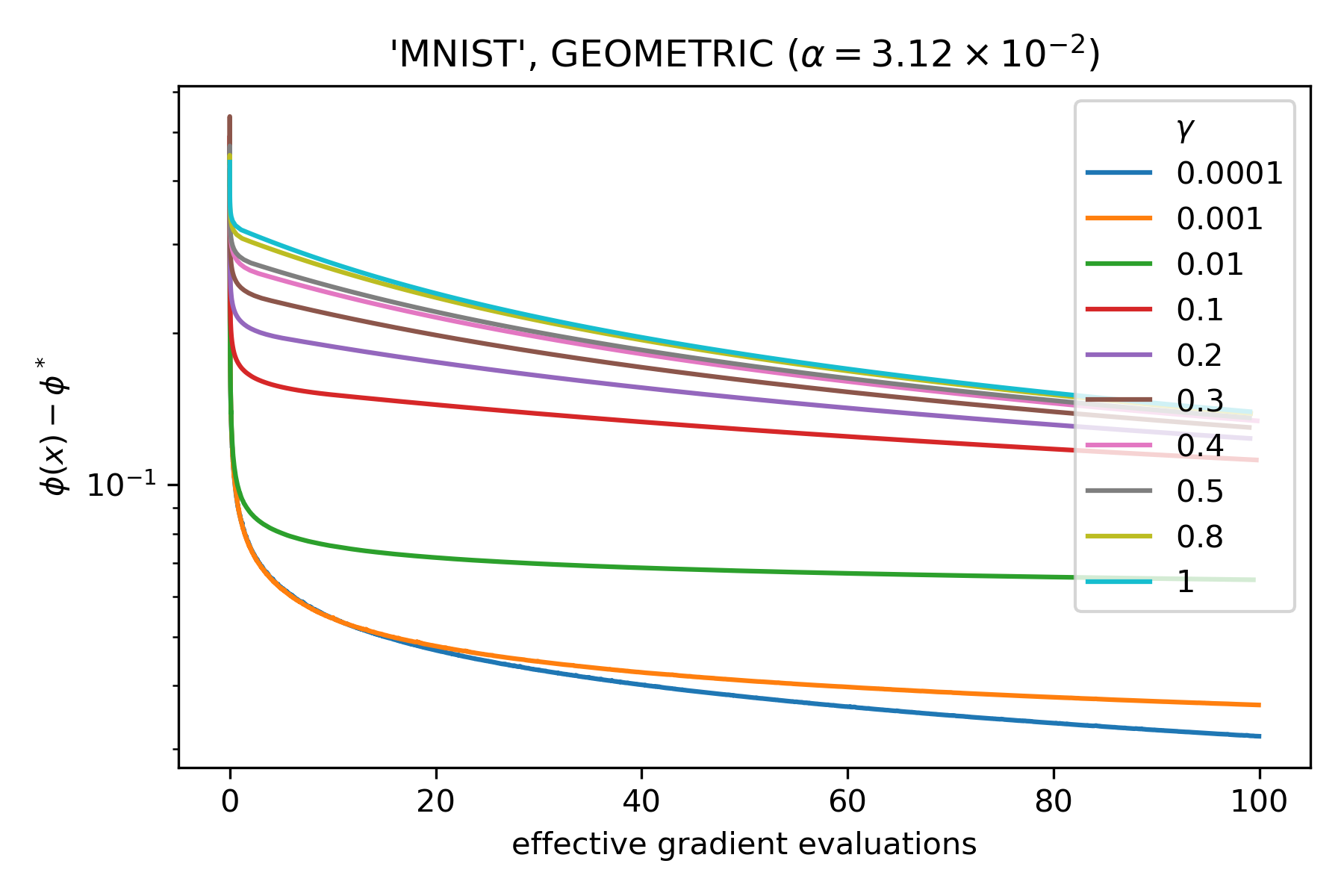}
	\includegraphics[width=0.45\linewidth]{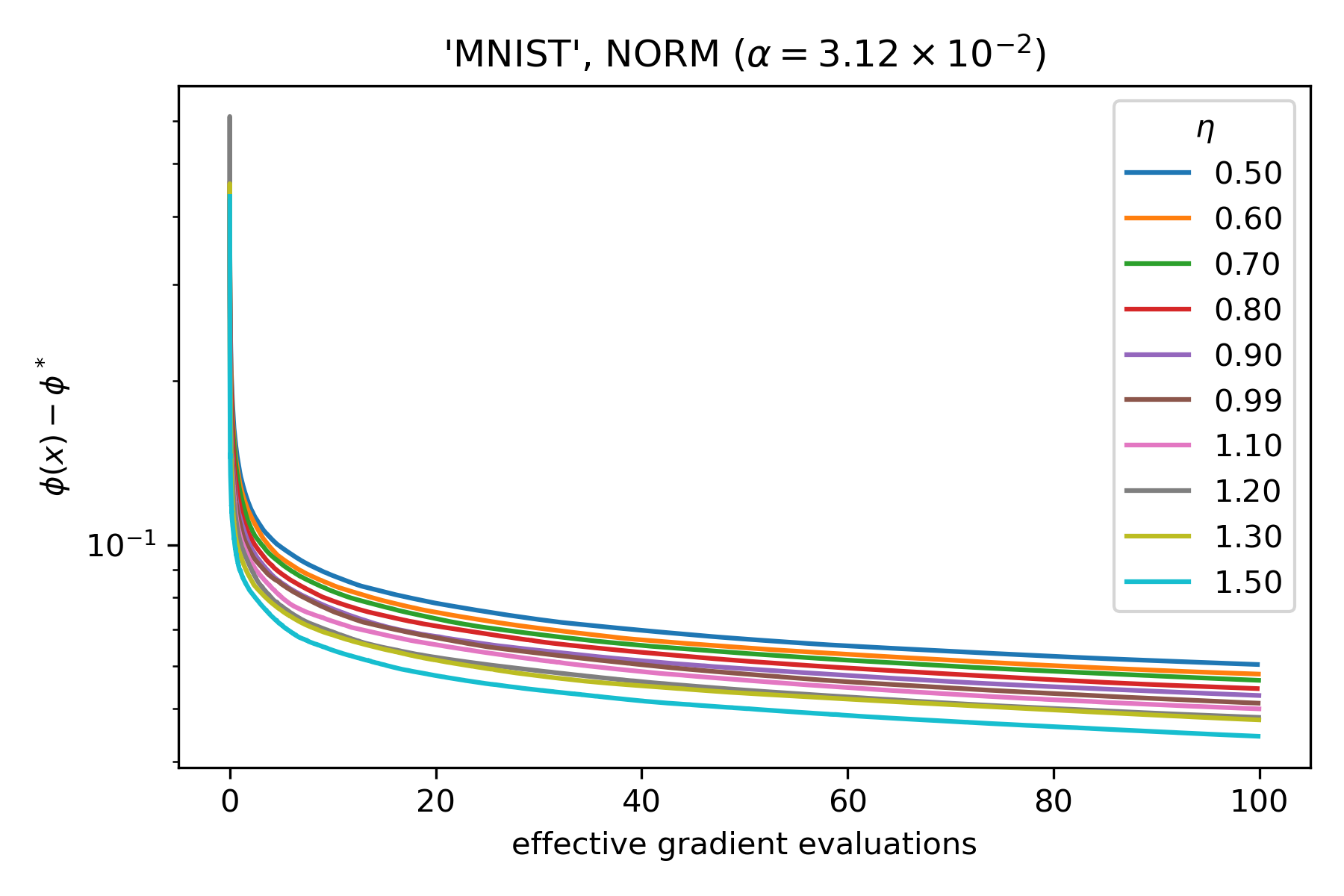} \\
	\includegraphics[width=0.45\linewidth]{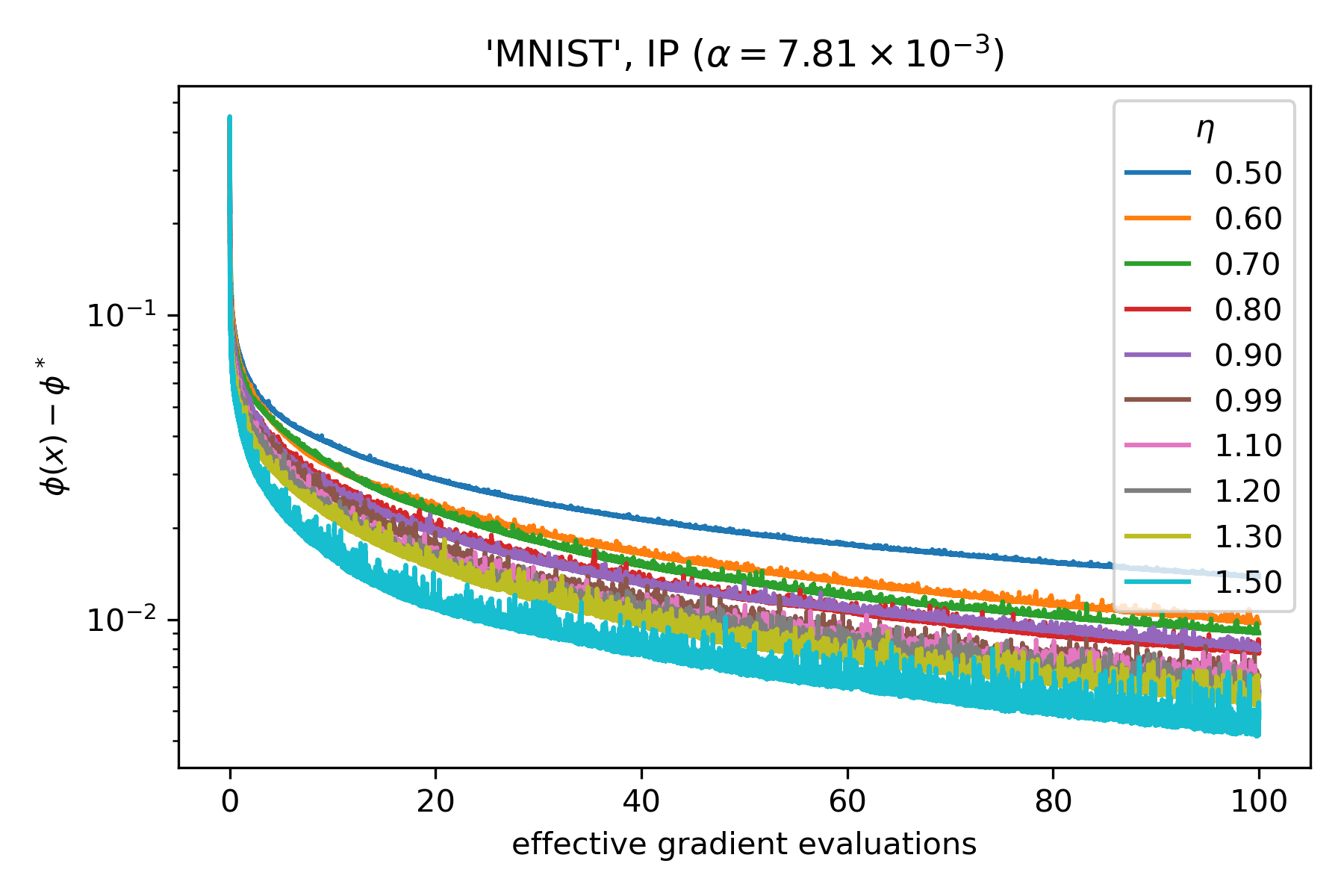}
    \includegraphics[width=0.45\linewidth]{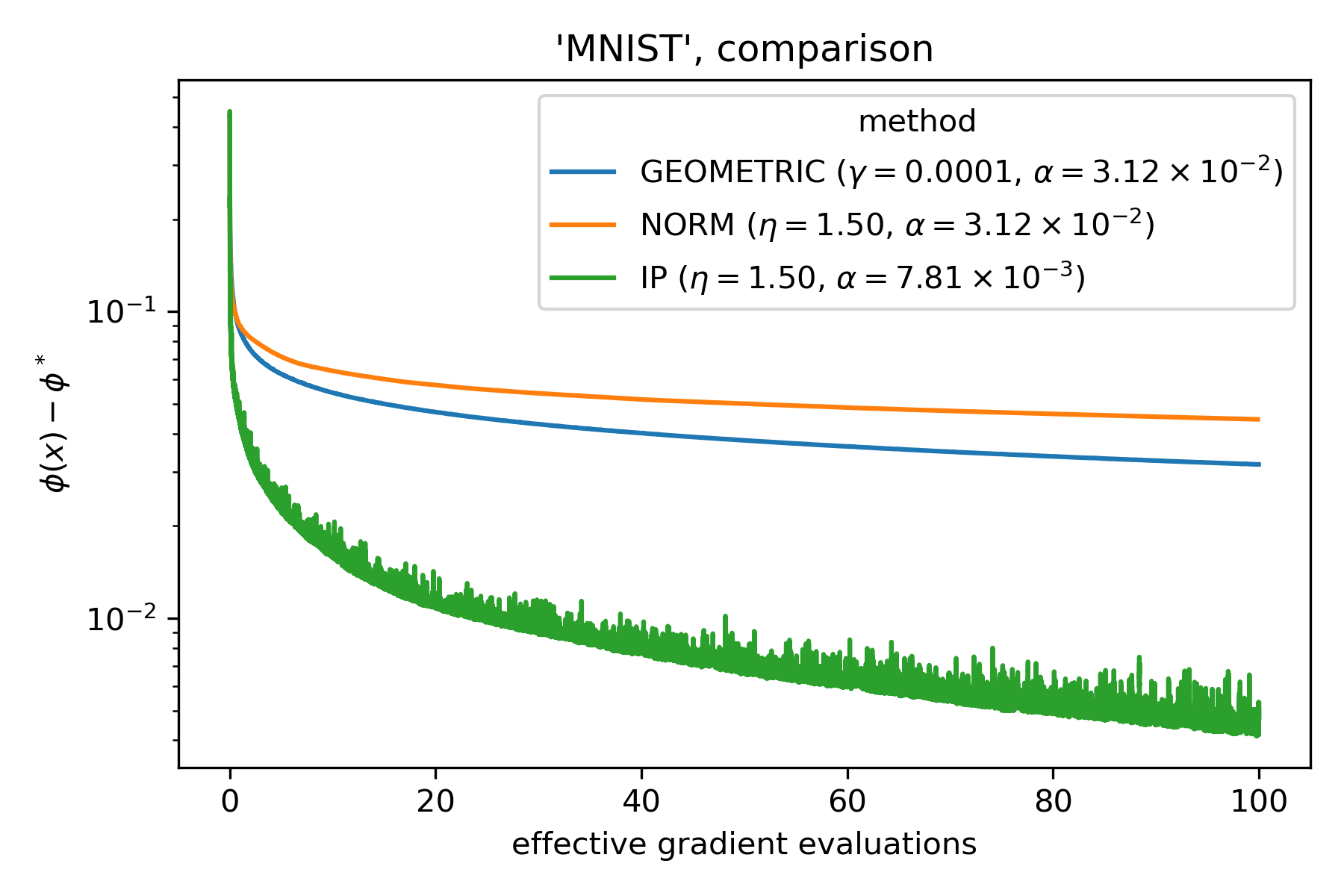} \\
    \caption{Optimality gap $\phi(x_k) - \phi^*$ against effective gradient evaluations on dataset \texttt{MNIST}, with different strategies to control batch size: geometric increase (top left), norm test (top right), inner-product test (bottom left), and comparison between the best run for each method (bottom right).
    }
    \label{fig:MNIST_func}
\end{figure}

\begin{figure}[htp]
    \centering
    \includegraphics[width=0.45\linewidth]{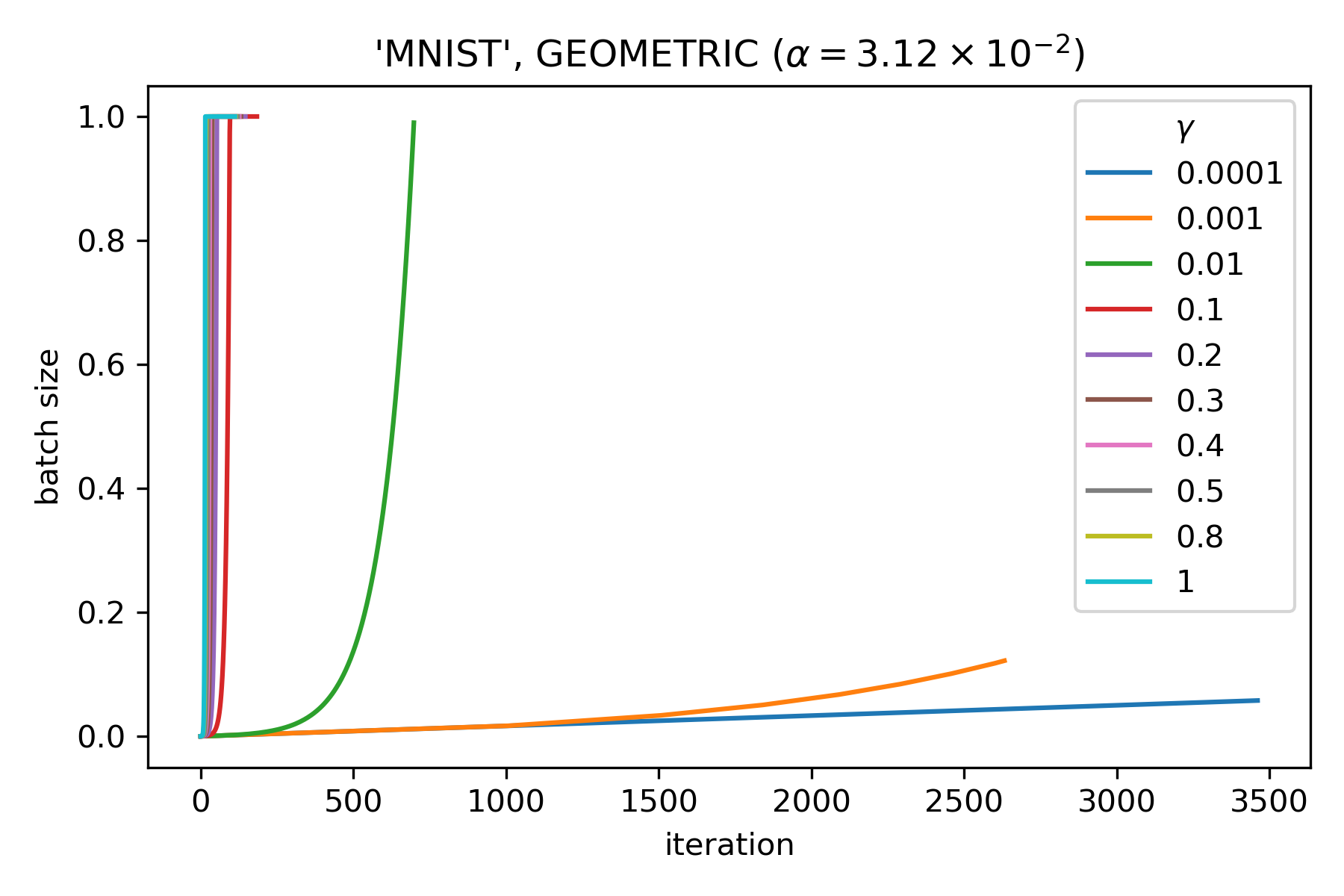}
	\includegraphics[width=0.45\linewidth]{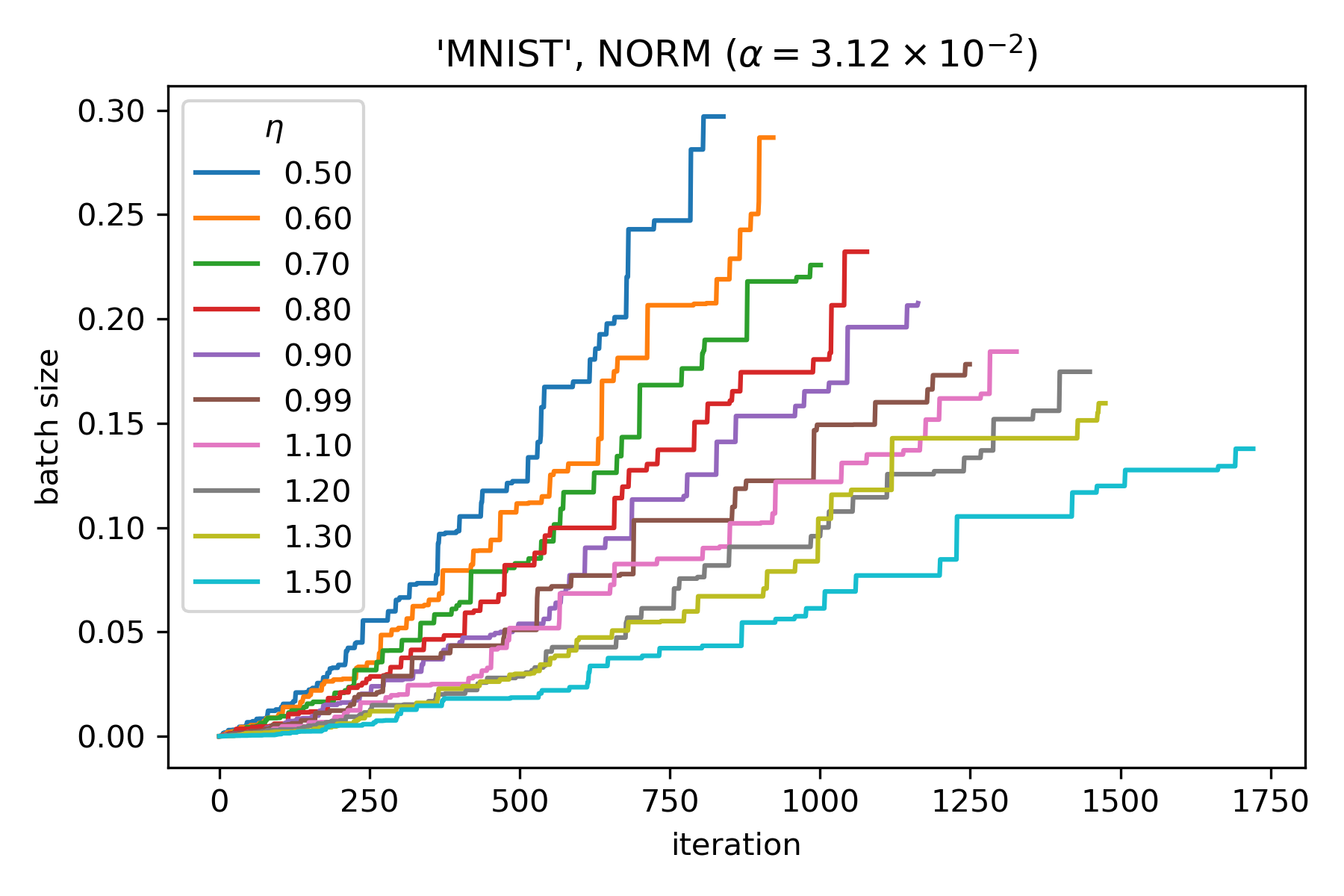} \\
	\includegraphics[width=0.45\linewidth]{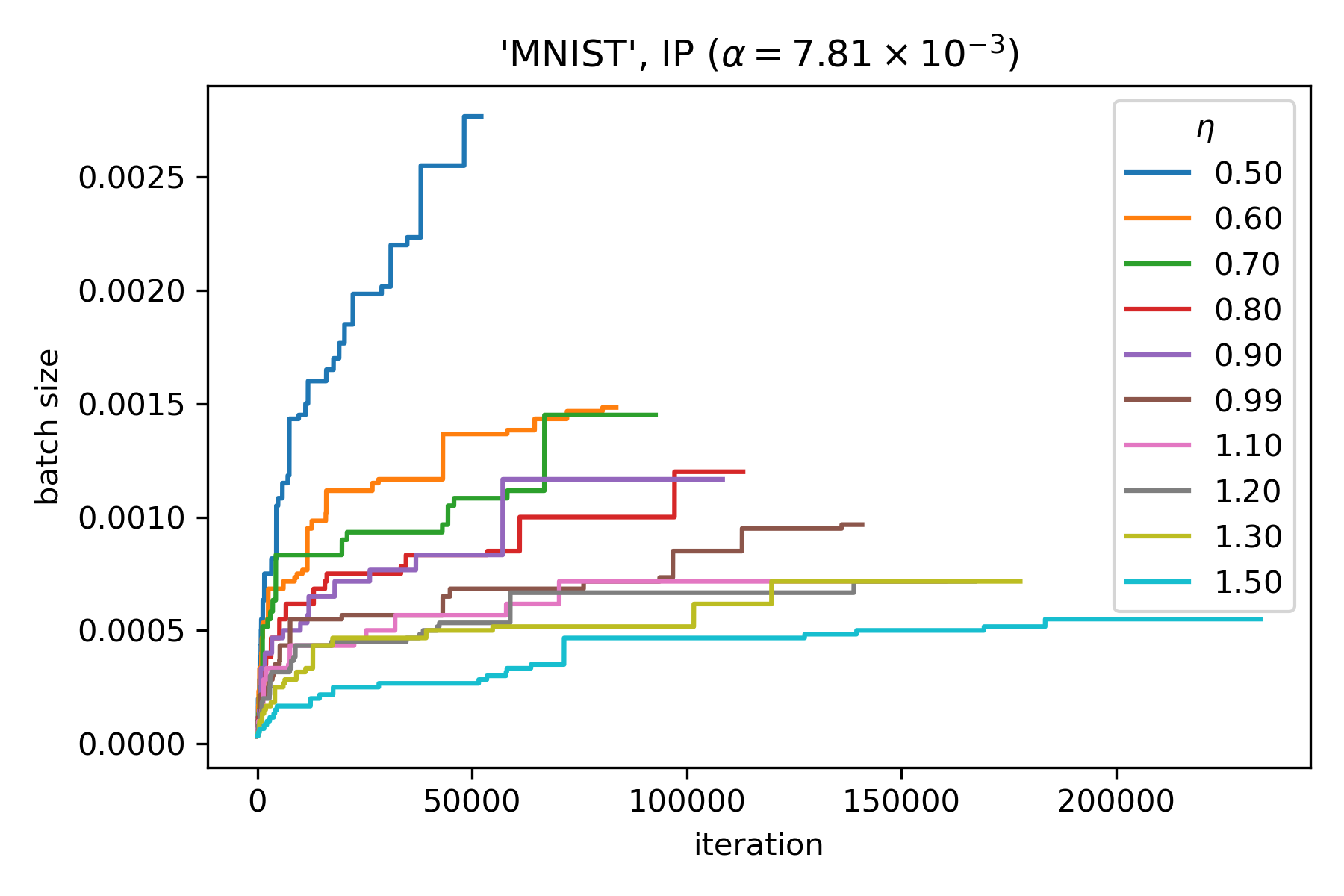}
    \includegraphics[width=0.45\linewidth]{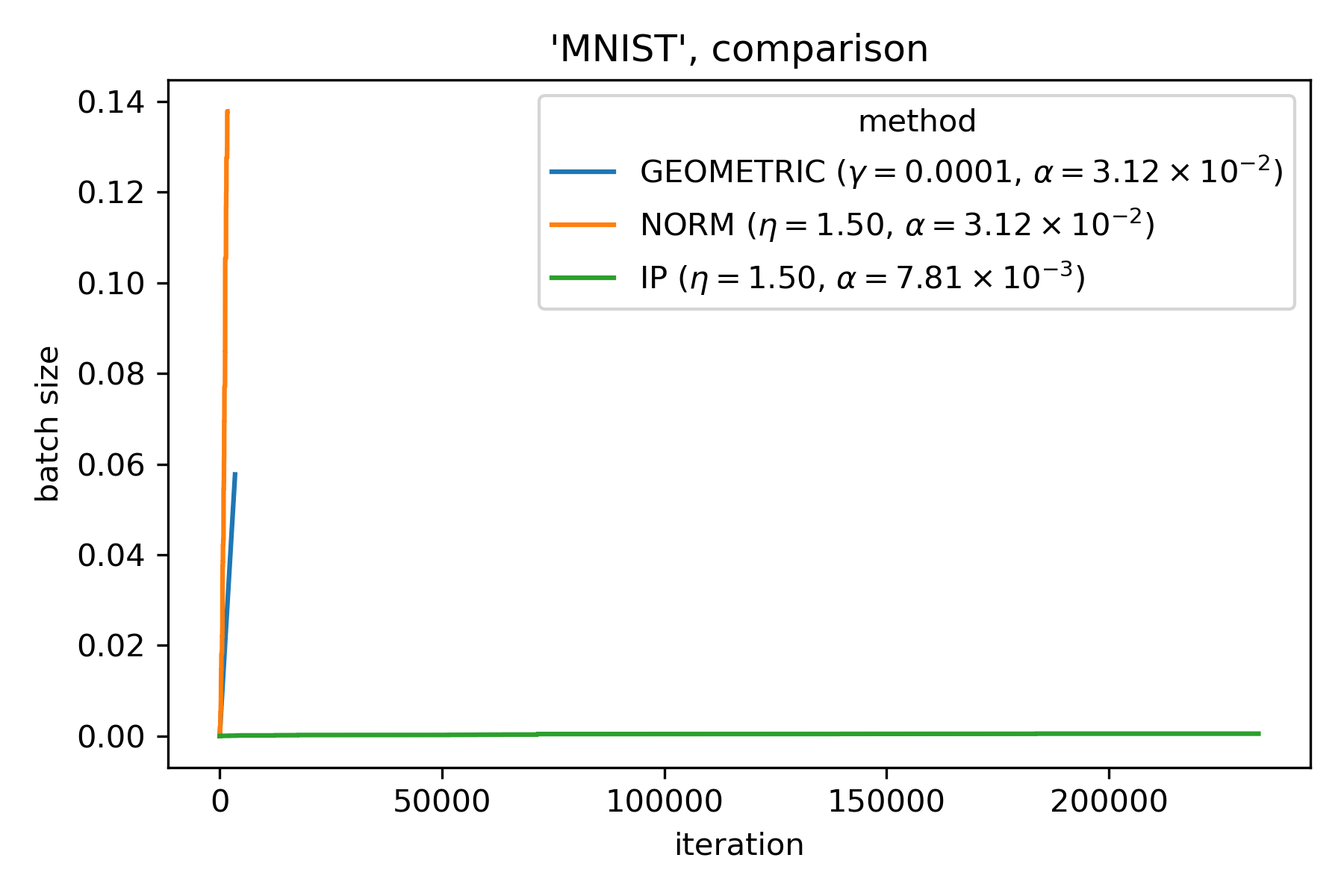} \\
    \caption{Batch size (as a fraction of total number of data points $N$) against iterations on dataset \texttt{MNIST}, with different strategies to control batch size: geometric increase (top left), norm test (top right), inner-product test (bottom left), and comparison between the best run for each method (bottom right).
    }
    \label{fig:MNIST_batchsizes}
\end{figure}

\begin{figure}[htp]
    \centering
    \includegraphics[width=0.45\linewidth]{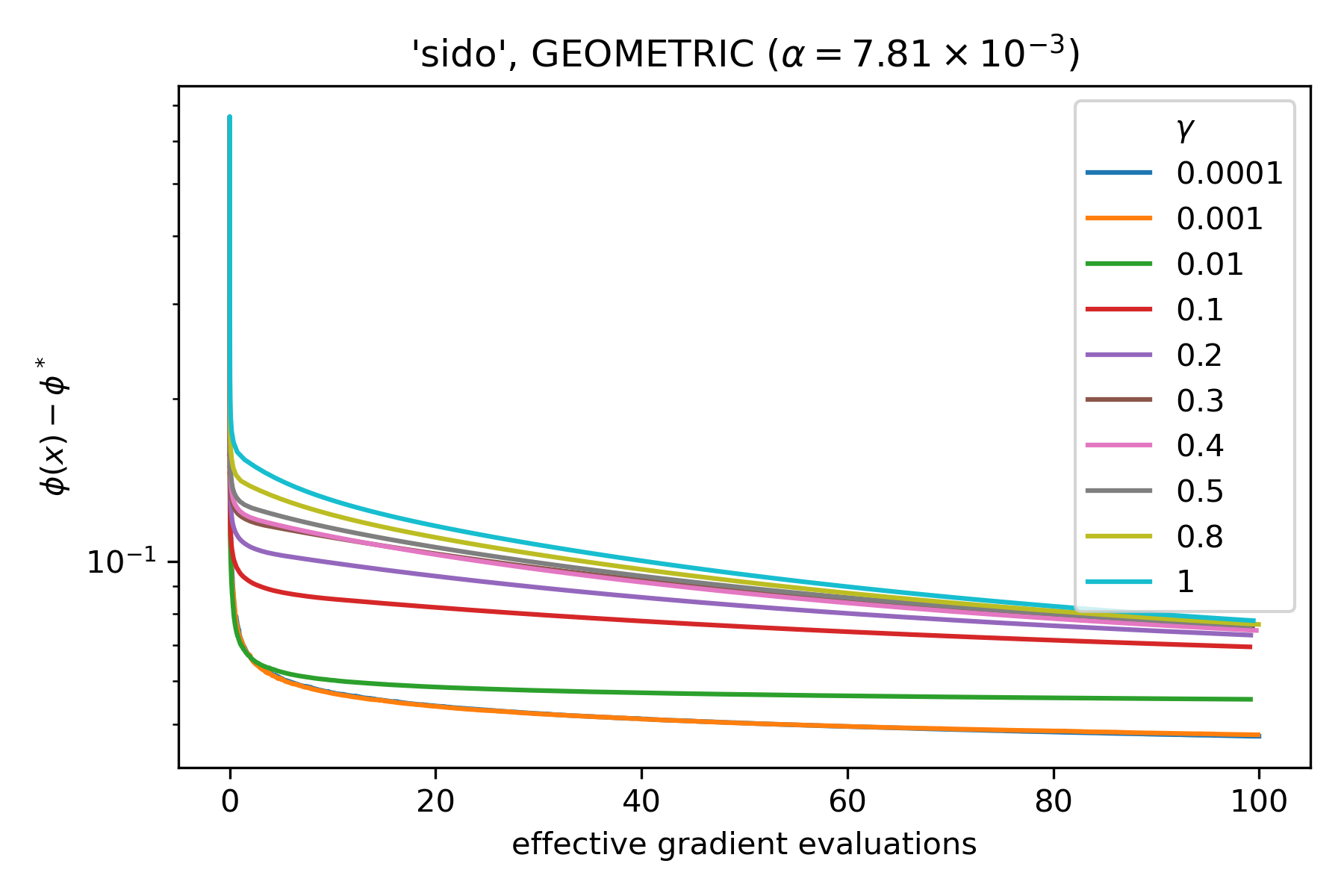}
	\includegraphics[width=0.45\linewidth]{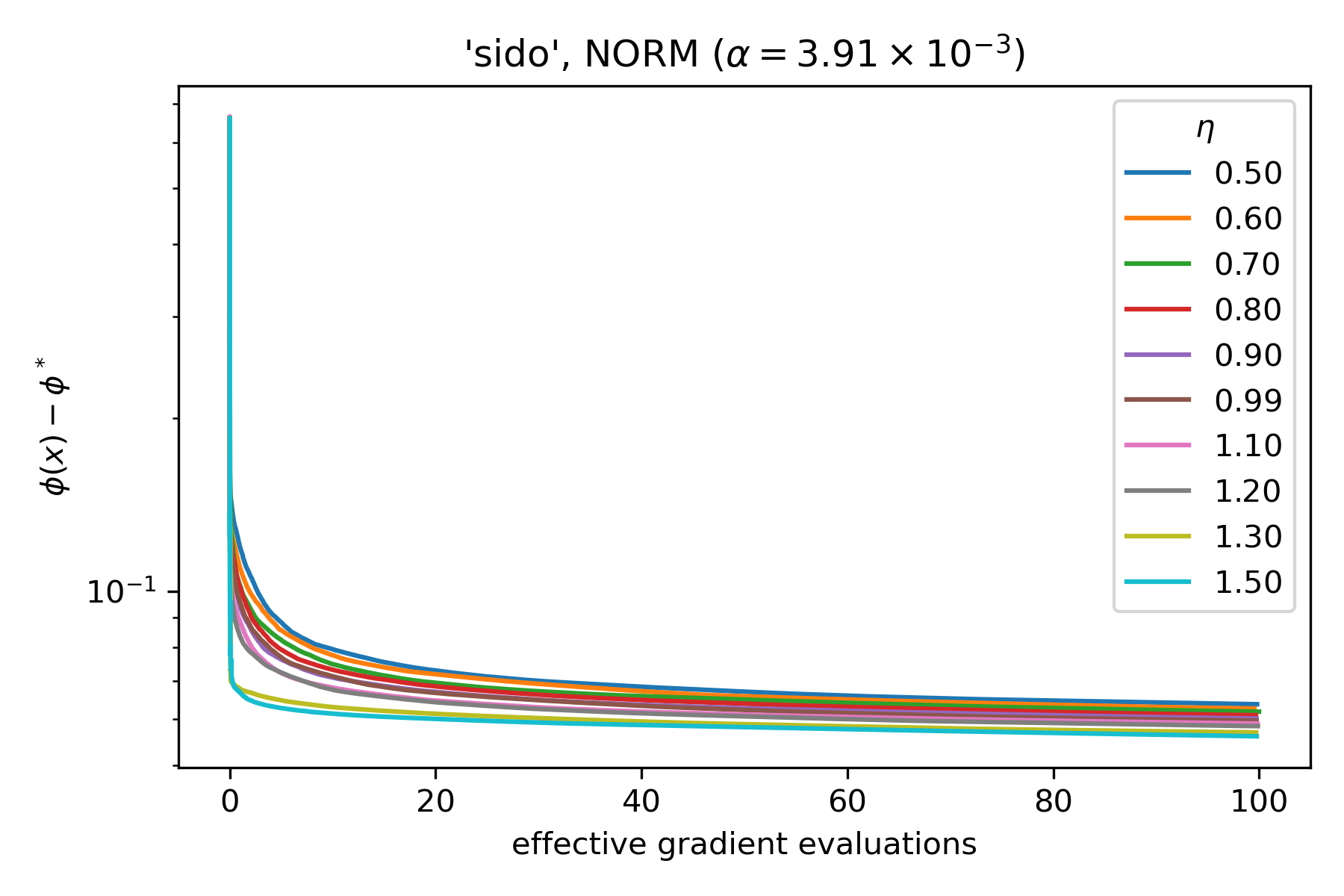} \\
	\includegraphics[width=0.45\linewidth]{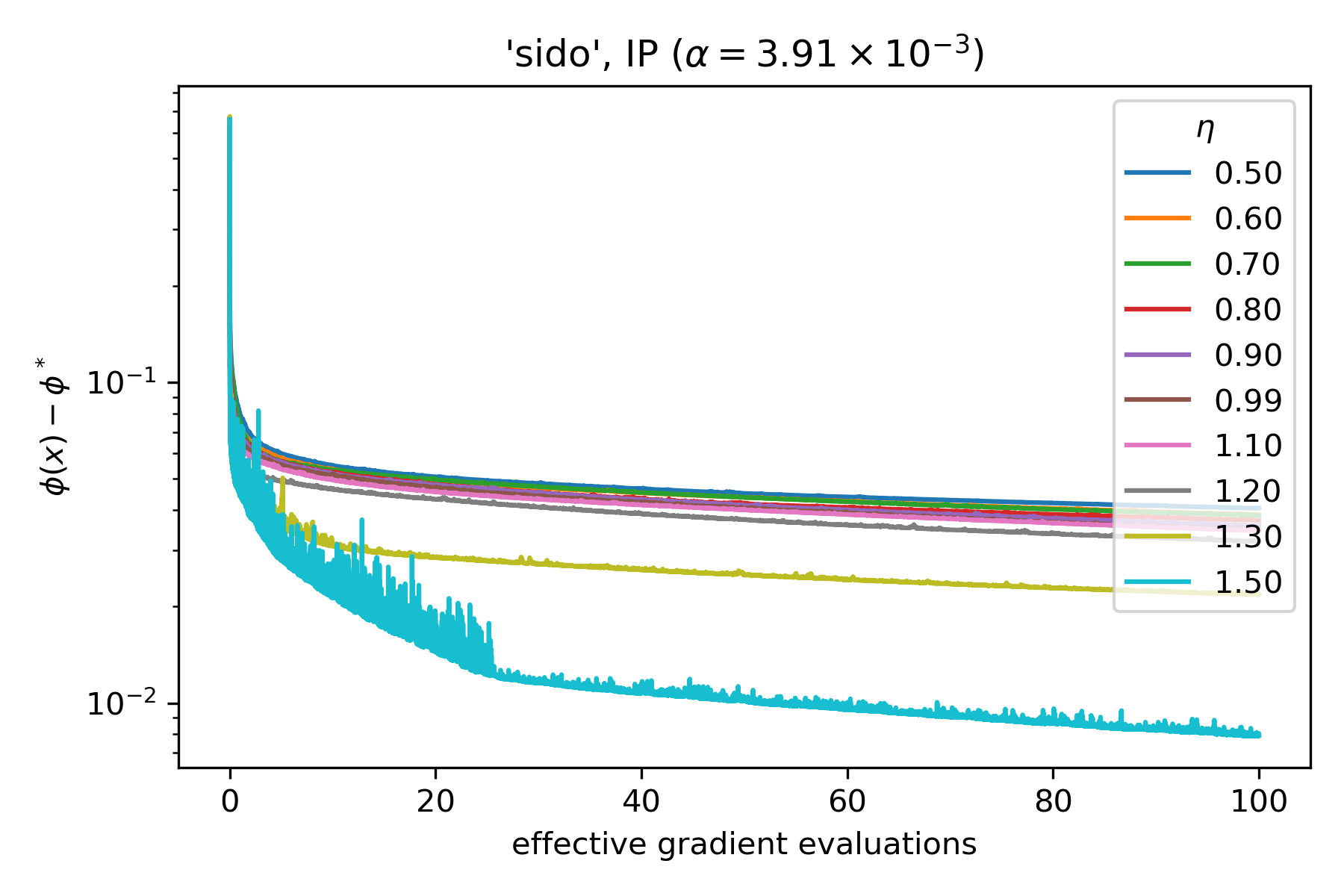}
    \includegraphics[width=0.45\linewidth]{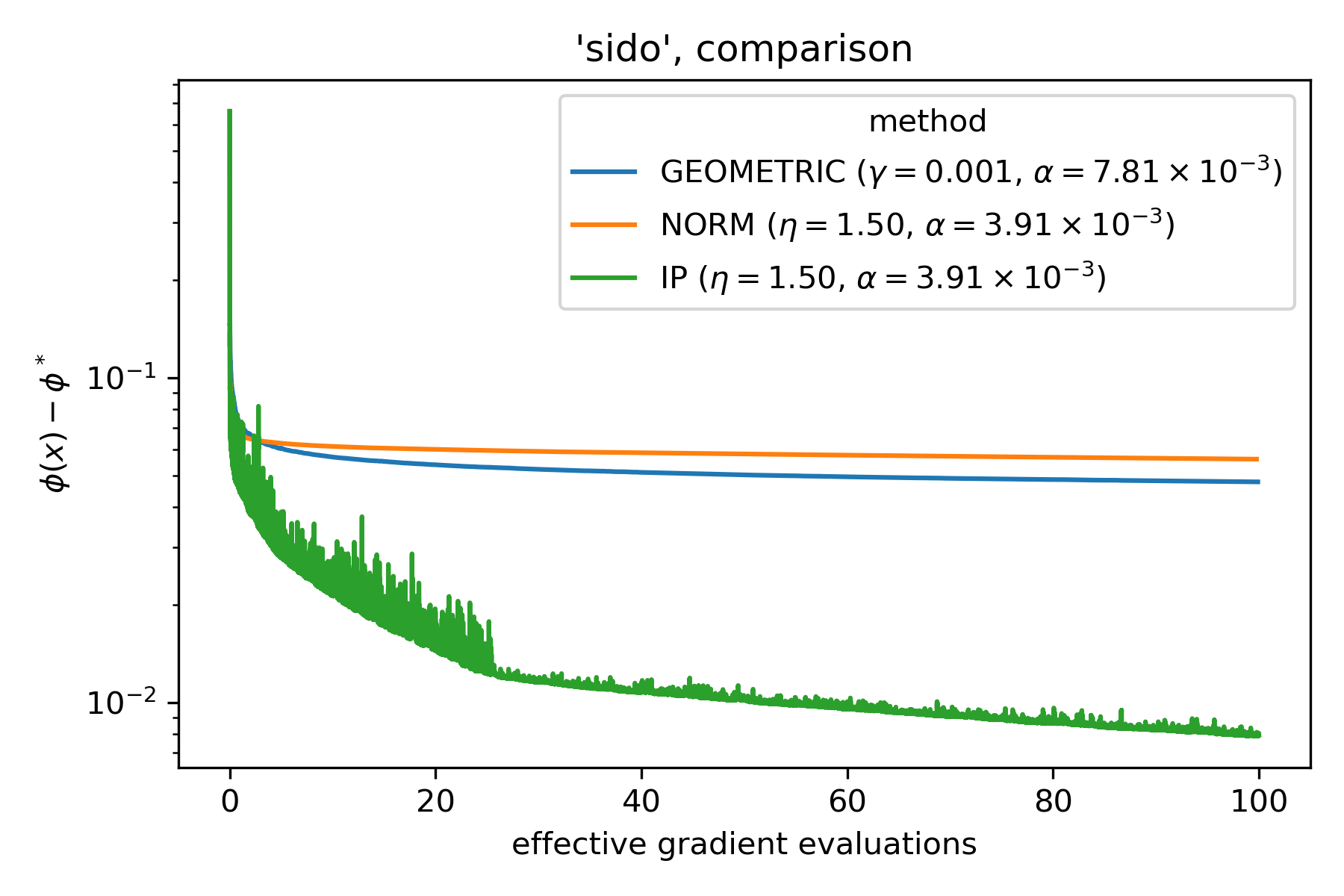} \\
    \caption{Optimality gap $\phi(x_k) - \phi^*$ against effective gradient evaluations on dataset \texttt{sido}, with different strategies to control batch size: geometric increase (top left), norm test (top right), inner-product test (bottom left), and comparison between the best run for each method (bottom right).
    }
    \label{fig:sido_func}
\end{figure}

\begin{figure}[htp]
    \centering
    \includegraphics[width=0.45\linewidth]{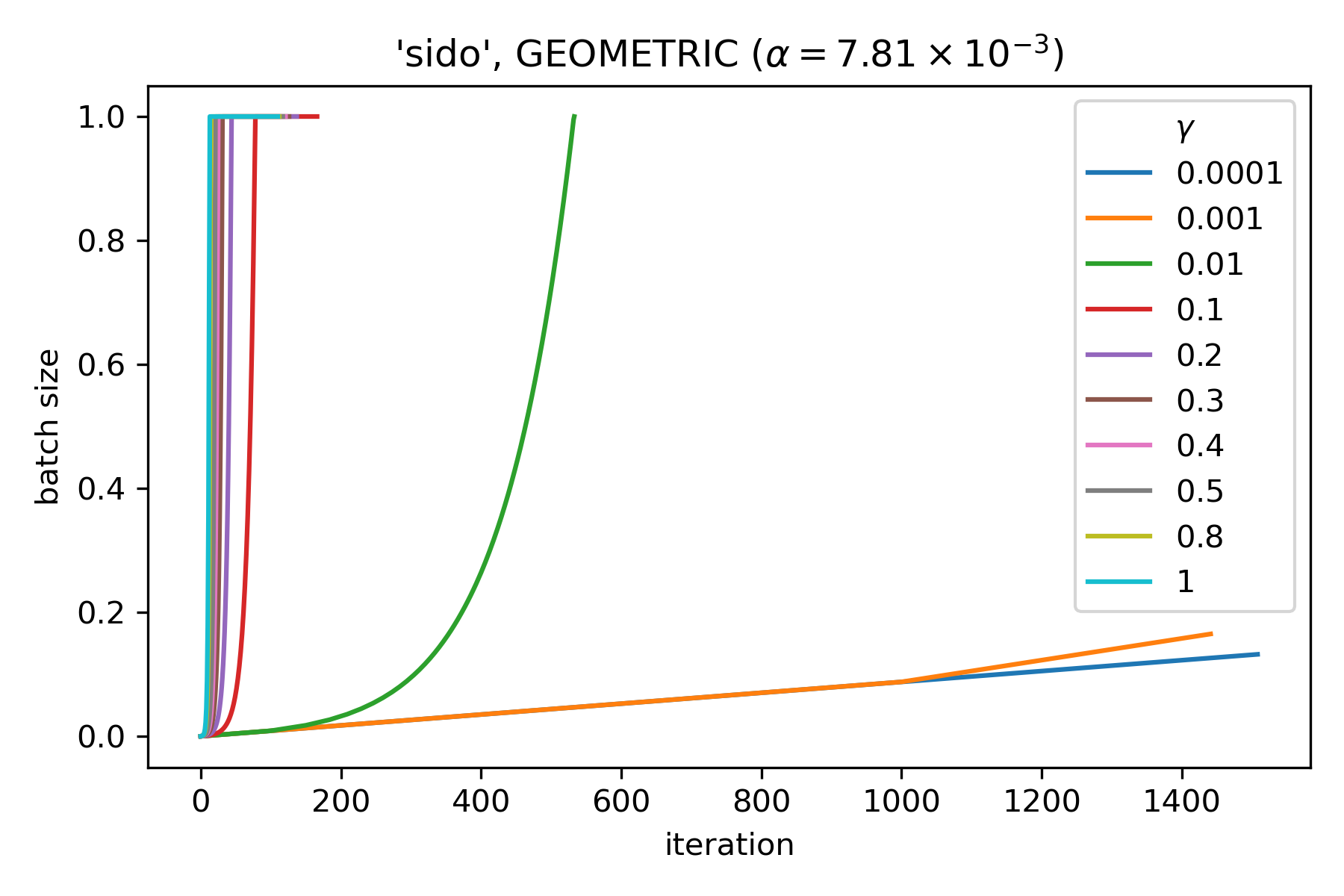}
	\includegraphics[width=0.45\linewidth]{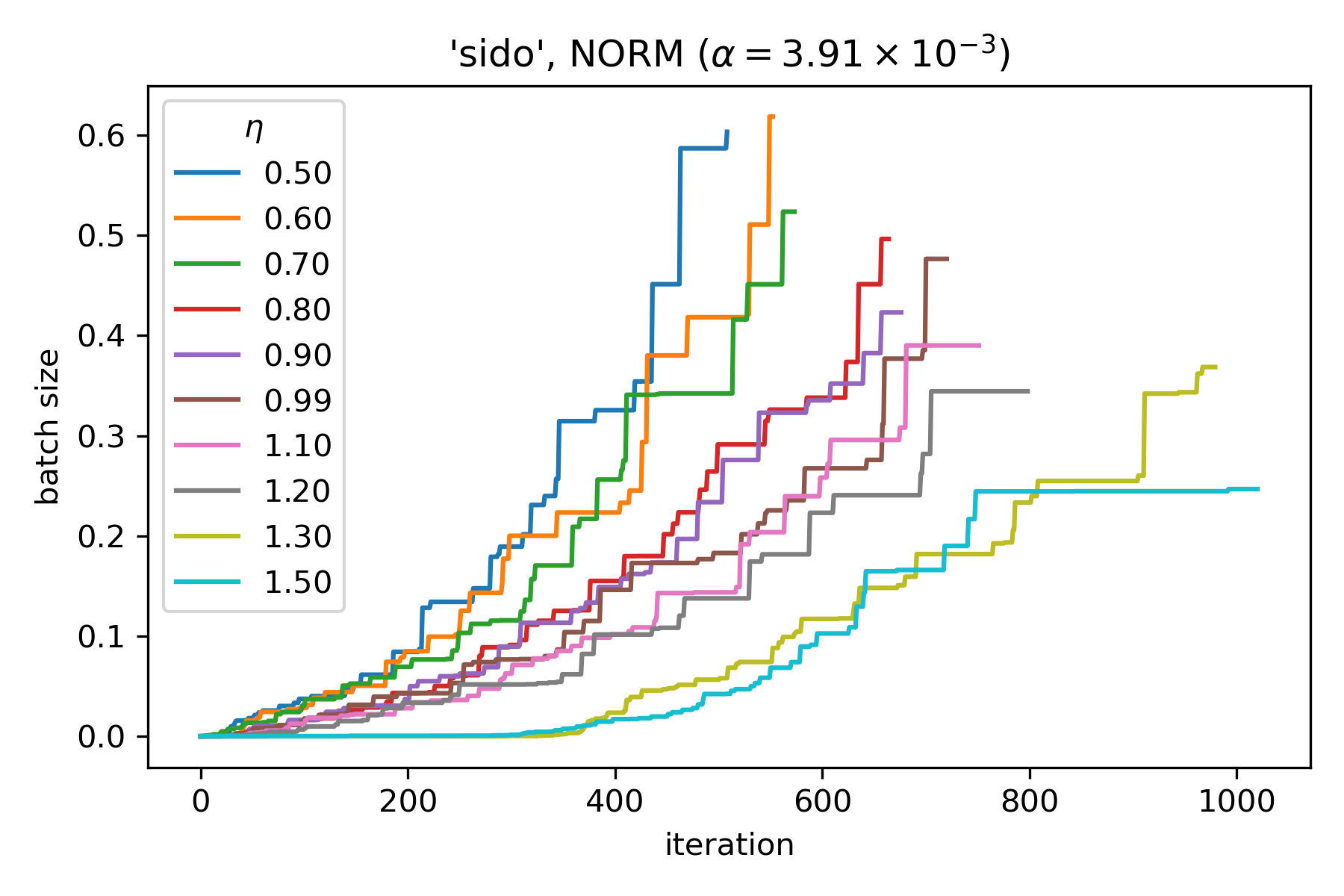} \\
	\includegraphics[width=0.45\linewidth]{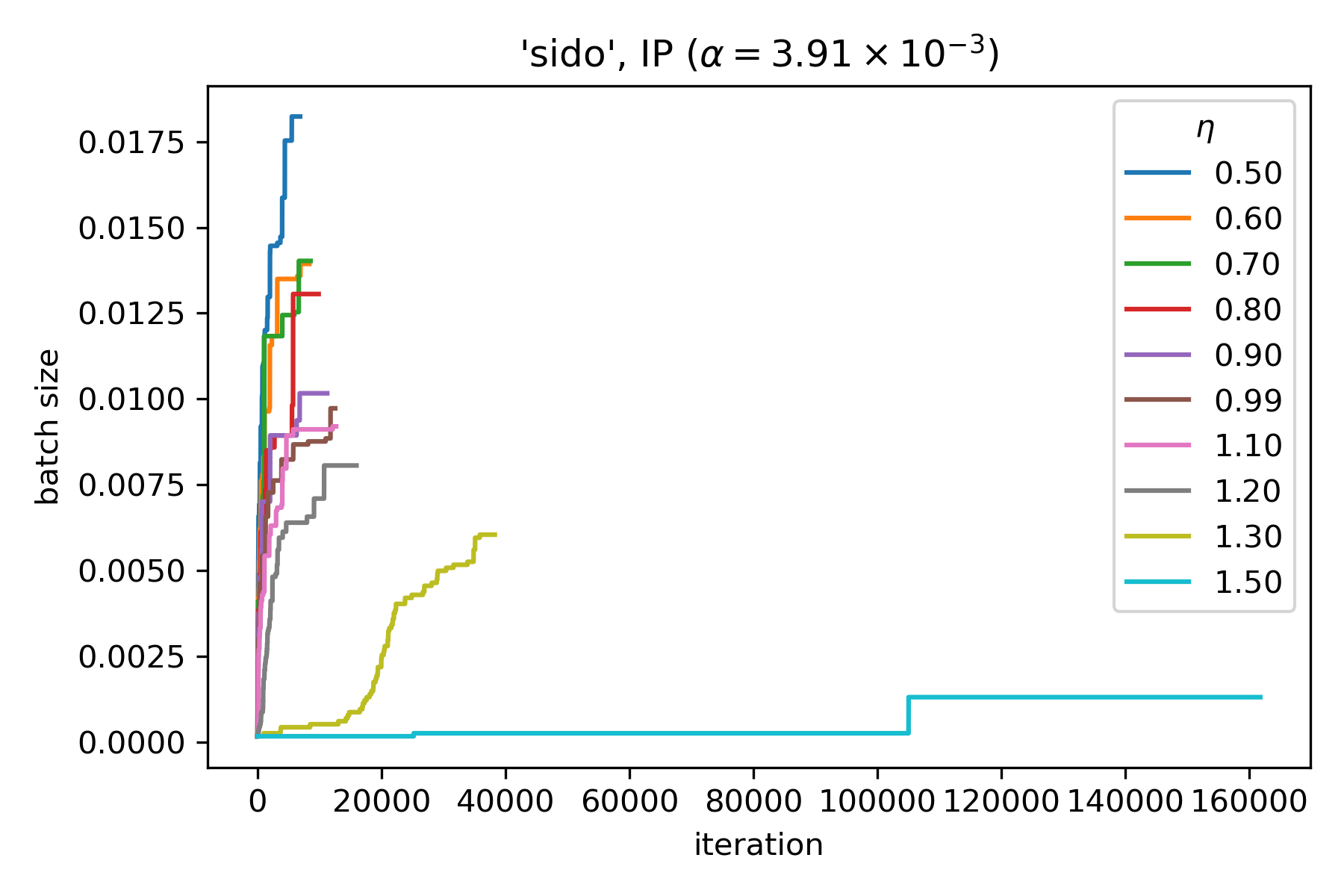}
    \includegraphics[width=0.45\linewidth]{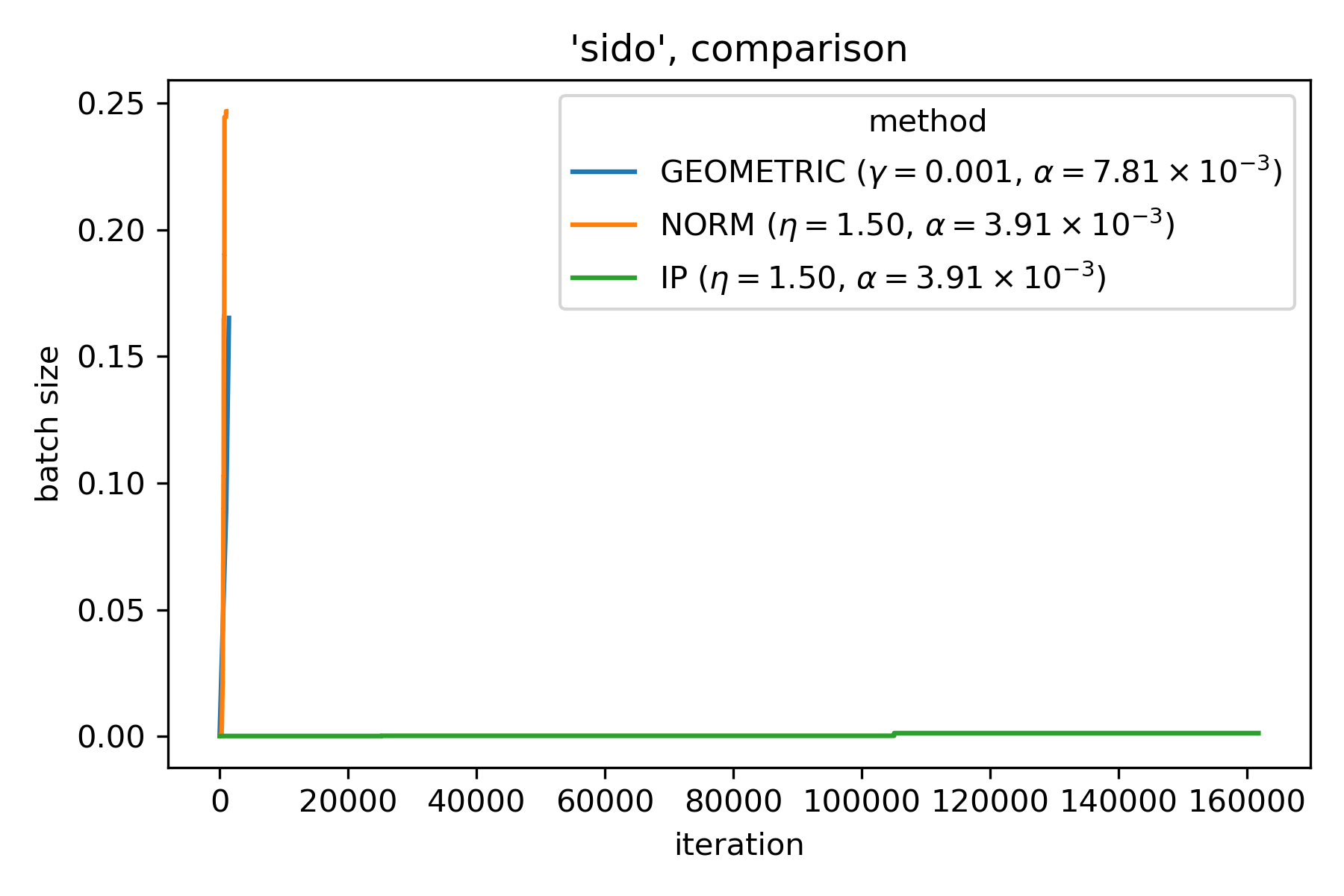} \\
    \caption{Batch size (as a fraction of total number of data points $N$) against iterations on dataset \texttt{sido}, with different strategies to control batch size: geometric increase (top left), norm test (top right), inner-product test (bottom left), and comparison between the best run for each method (bottom right).
    }
    \label{fig:sido_batchsizes}
\end{figure}

\end{document}